
\documentclass[10pt]{amsart}

\usepackage{latexsym,amsmath,amssymb,amscd}

\newcommand{\bfi}{\bfseries\itshape}


\newtheorem{theorem}{Theorem}[section]
\newtheorem{lemma}[theorem]{Lemma}
\newtheorem{proposition}[theorem]{Proposition}
\newtheorem{corollary}[theorem]{Corollary}

\theoremstyle{definition}
\newtheorem{definition}[theorem]{Definition}
\theoremstyle{remark}
\newtheorem{remark}[theorem]{Remark}

\numberwithin{equation}{section}


\def\C{\mathbb C}
\def\R{\mathbb R}
\def\S{{\mathcal S} }
\def\g{\mathfrak g}

\def\x{\mathfrak x}
\def\R{\mathbb R}
\def\I{\mathbb I}
\def\N{\mathbb N}

\def\de{\delta}
\def\rh{\rho}

\def\va{\varepsilon}

\def\om{\omega}
\def\va{\varphi}

\def\ol#1{\overline{#1}}

\def\res#1{_{\vert #1}}
\def\inv{^{-1}}

\def\ker#1{\opn{ker}(#1)}
\def\L#1#2{L^{#1}(#2)}


\def\p{\mathfrak{p}}
\def\n{\mathfrak{n}}

\def\om{\omega}
\def\ep{\epsilon}

\def\ch{\chi}
\def\et{\eta}
\def\ps{\psi}


\def\H{{\mathcal H}}
\def\K{{\mathcal K}}

\def\NN{{\mathcal N}}
\def\RR{{\mathcal R}}

\def\X{{\mathcal X}}
\def\U{{\mathcal U}}
\def\V{{\mathcal V}}
\def\PP{{\mathcal P}}

\def\O{{\mathcal O}}
\def\iy{\infty}
\def\no#1{\Vert #1\Vert }

\def\wh{\widehat}

\def\ind{\opn{ind}}

\def\CC{\mathbb C}

\def\RR{{\mathbb  R}}
\def\BB{{\mathbb  B}}

\def\NN{{\mathbb N}}
\def\QQ{{\mathbb Q}}

\def\np{N(\pi)}
\def\Hc{\mathcal H}
\def\Oc{\mathcal O}
\def\Cc{\mathcal C} 
\def\Pc{\mathcal P}
\def\Sc{\mathcal S}

\def\Xc{\mathcal X}
\def\Fc{\mathcal F}
\def\Kc{\mathcal K}
\def\Vc{\mathcal V}
\def\II{\mathcal I}

\def\ep{\varepsilon}
\def\dd {\,\text{\rm d}}


\newcommand{\scalar}[2]{( #1\mid #2)}
\newcommand{\norm}[1]{\Vert #1 \Vert }
\newcommand{\dual}[2]{\langle #1, #2\rangle}

\newcommand{\opn}{\operatorname}

\newcommand{\tr}{\opn{tr}}

\newcommand{\Sg}{\mathfrak{S}}
\newcommand{\zg}{\mathfrak{z}} 
\renewcommand{\gg}{\mathfrak{g}}
\newcommand{\hg}{\mathfrak{h}}

\newcommand{\Xit}{\text{\bfi{X}}}
\newcommand{\Ad}{\opn{Ad}}

\begin{document}

\title[Spectral synthesis]{Spectral synthesis for coadjoint orbits of nilpotent Lie groups}
\author{Ingrid Belti\c t\u a}
\address{Institute of Mathematics ``Simion Stoilow'' 
of the Romanian Academy,  P.O. Box 1-764, Bucharest, Romania}
\email{Ingrid.Beltita@imar.ro}
\author{Jean Ludwig}
\address{Universit\'e de Lorraine, Institut \'Elie Cartan de Lorraine, UMR 7502, Metz, F-57045, France}
\email{jean.ludwig@univ-lorraine.fr}

\date{\today}
\keywords{nilpotent Lie group; spectral synthesis; coadjoint orbit; minimal ideal; primary ideal}
\subjclass[2000]{Primary 43A45; Secondary 43A20, 22E25, 22E27}
\thanks{The first author has been partially supported by  the Grant
of the Romanian National Authority for Scientific Research, CNCS-UEFISCDI,
project number PN-II-ID-PCE-2011-3-0131. }

\begin{abstract}
We determine the space of primary ideals in the group algebra $\L1G $  of  a 
connected nilpotent Lie group by identifying for every $\pi\in\wh G $ 
the family $\II^\pi $ of primary ideals with hull $\{\pi\} $
 with the family of invariant polynomials of a certain finite dimensional subspace $\PP_Q^\pi $  of the space of polynomials $\PP(G) $ on $G $. 
\end{abstract}

\maketitle


\section{Introduction}\label{introd}
Let $G$  be a connected and simply connected nilpotent Lie group and let $\gg$ be its Lie algebra. 
Let $\pi\in \widehat{G}$ be an irreducible unitary representation of $G $. 
Then $\pi $ defines an irreducible unitary representation of the convolution algebra $L^1(G)$, the space  of the measurable functions $f\colon G\to \C $ which are integrable with respect to Haar measure.

Every equivalence class of representations $\pi\in\widehat{G} $ defines the primitive ideal $\ker\pi $ of $\L1G $, and the mapping $\wh G\to Prim(G), \pi\mapsto\ker\pi $ is  a bijection, since $G $ is type I and $^* $-regular. 
Furthermore, the fact that  $G $ is connected and has polynomial growth implies that there exists for every 
$\pi\in\wh G $ a unique minimal ideal $j(\pi) $ with hull $\{\pi\} $ which is contained in every closed 
twosided ideal $I $ with $\opn{hull}(I)=\{\pi\} $ (see \cite{Lu80}). 
It well known that the Schwartz space $\S(G) $ has a dense intersection with $\ker\pi $ (see \cite{Lu83a}).  
This  implies then  that $\ker\pi^N $ is contained in $j(\pi) $ for $N\in\N $ large enough and is dense in it (see \cite{Lu83b}). 

On the other hand Kirillov's orbit picture of the spectrum $ \wh G$ of $G $ tells us that every irreducible unitary representation $\pi $ of $G $ is associated with a coadjoint orbit $\O_\pi\subset \g^* $.
Since the data  $\ker\pi $ and $j(\pi) $ are determined by $\pi $ and hence by the Kirillov orbit 
$\O_\pi  $, one can ask if the geometric structure of the orbit $\O_\pi $ gives us some information on the structure of the algebra $\ker\pi/j(\pi) $. 
For instance, one would like to determine the set $\II^\pi $ of all ideals $I $ in $\L1G $ with hull $\{\pi\} $, the so called primary ideals of $\L1G $. 
In the case where the orbit $\O_\pi $ is flat,  then $\ker\pi/j(\pi)=\{0\} $, i.e., $\pi $ is  a set of spectral synthesis. 
If the group $G $ is step 3, then the structure of the algebra $\ker\pi/j(\pi) $ has been determined in \cite{Lu83a}: The set $\II^\pi $ of all twosided closed ideals of $\L1G $ contained in $\ker\pi $ and containing $j(\pi) $ is in bijection with the set of translation invariant subspaces of a certain finite dimensional translation invariant space $\PP^\pi $ of polynomials on $\g $. 
This space $\PP^\pi $ is determined by a weight type condition coming  from the orbit $\O_{\pi} $.

In this paper we discover for every nilpotent Lie group G and every $\pi\in\wh G $ a finite dimensional translation invariant subspace $\PP_0^\pi=\PP_0 $ of polynomials on $G$, such that the subspace 
$J_{\pi,0}:=\{f\in \S(G)\mid pf\in\ker\pi\text{ for all }p\in\PP_0^\pi\} $ is dense in $j(\pi) $ (Theorem \ref{Jpi0cor}). 
This theorem then implies that there exist for every maximal projection $Q=Q_\xi $ (see Definition \ref{maximalpro}) a subspace $\PP_Q^\pi $  of $\PP_0^\pi $ whose invariant subspaces are in bijection with the set $\II^\pi $ (see Theorem \ref{idetiq}). 
In particular this shows that the Schwartz space $\S(G) $ has a dense intersection with every primary ideal of $\L1G $ (see Theorem \ref{sdense}).
We then study the case where the space $\PP_Q^\pi $ is itself translation invariant (see Theorem \ref{wiseqpqinv}).
Finally we consider the ideals that are $L^\infty(G/N)$ invariant, where $N=\exp\n$ with $\n$  the ideal generated by the
radicals of the elements in the coadjoint orbit corresponding to $\pi$,  and then add some examples. 

The more difficult problem, which is completely open,  is to determine  explicitly for groups of step $\geq 4 $ these spaces $\PP_Q^\pi $  and their relations with the geometric structure of the Kirillov-orbit $\O_{\pi} $. 
For this one needs  precise estimates of the growth of the functions $pc^\pi_{\xi,\et} $, which are products of  polynomial functions $p $ with smooth coefficient functions $c^\pi_{\xi,\et} $. 
This means that the determination of $\PP_Q^\pi $ forces us  
to understand   oscillatory Fourier integrals of the form
\begin{eqnarray*}
 c_{\xi,\eta}(g)=\int\limits_{\R^d}\xi(Q_\pi(g,z))\ol{\eta}(z)^{iP_\pi(g,z)}\, dz, \quad  g\in G,
\end{eqnarray*}
where $\xi$, $\eta\in\S(\R^d) $ and $P_\pi, Q_\pi:G\times\R^d\mapsto \R^d$ are  some special  polynomial mappings coming from the realization of $\pi $ as monomial representation.

\subsection{Notations}\label{notation}

Let as above $\gg $ be  a  nilpotent Lie algebra.
We can realize the group $G=\exp\gg$ as the Lie algebra $\gg$ itself by using the Campbell-Baker-Hausdorff multiplication
$$
 X\cdot Y := X\cdot_{\text{CBH}} Y=X+Y+\frac{1}{2}[X,Y]+\frac{1}{12}[X, [ X,Y]]+\frac{1}{12}[Y, [Y,X]]+\cdots,\ X,Y\in \g.
$$
 The Haar measure  of this group $(\g,\cdot_{\text{CBH}})$ is then Lebesgue measure on the real vector space $\gg $.
 Let $\gg^*$  be the dual of $\gg$
and  denote by 
$$\dual{\cdot}{\cdot}\colon \gg^\ast \times \gg\mapsto \RR $$
the natural duality.

We denote by $\L1G $ the space of  functions $f\colon G\to \C $ which are integrable with respect to Haar measure. 
Then $\L1G $ is an involutive  Banach algebra for the convolution product $\ast$ and the involution ${}^* $
$$
\begin{aligned}
 f\ast g(x) & =\int_G f(y)g(y\inv x)dy\\
 f^*(x)&= \ol{f(x\inv)},  \qquad x\in G.
\end{aligned}
$$
Let also for a function $f:G\to\C $
\begin{eqnarray*}
 \check f(x):=f(x\inv ),\  x\in G.
 \end{eqnarray*}

In this paper $\Sc(\g)$  will denote the Fr\'echet space  of Schwartz functions on the real vector space $\g$, i.e.,  $\Sc(\gg) $  is the space  of rapidly decreasing smooth functions  on $\gg $ and 
$\Sc(G):=\{f\colon G\mapsto \C\mid f\circ \log\in \S(\g)\}$. 
The subspace  $\S(G) $ is then a dense involutive subalgebra of $\L1G $. 

The dual space $\Sc'(G)$ of $\S(G) $ is  the  space of temperate distributions on $G $.
Then $\Sc(G) $ is a subspace of  $BC^\infty(G)$, the space of smooth bounded  functions on $G$, 
with bounded  left and right derivatives,   and $BC^\infty(G)\subset \Sc'(G)$. 

For  a mapping  $\varphi$ from $G $ into a set $X$, denote for $g\in G $ by $\lambda(g)\varphi $ the left translate and by $\rh(g)\varphi $ the right translate of $\varphi $. 

Let $\Pc(G)$ be the space of all polynomial functions on $G$ with complex coefficients, that is, functions $p\colon G \to \CC$ that are polynomials in any polynomial chart of $G$.   
For a $p\in \Pc(G)$ we denote by $\Vc_p$ the subspace of $\Pc(G)$ generated by $G $-left and right translates of $p$, 
\begin{equation}\label{def:Vcp}
 \Vc_p=\opn{span}\{ \lambda(x) \rho(y) p \mid x, y \in G\}.
 \end{equation}
This is a finite dimensional subspace of $\Pc(G)$, and a $G\times G$ submodule of $\Pc(G)$ for the action $\lambda\otimes \rho$.
For $p\in \Pc(G)$ the dimension of $\Vc_p$ is called the degree or $G$-degree of $p$. 

\subsection{Kirillov-theory}\label{kith}
The orbit picture of the spectrum $\wh G $ of $G $, discovered by Kirillov in \cite{Ki62},  
describes the irreducible representations $(\pi,\Hc_\pi) $ of the group $G $ as induced representations. 
For every $\ell\in \gg^* $, there exists a polarization $\p\subset \g $ at $\ell $, i.e.,  $\p $ is a maximal isotropic subspace for the bilinear form $B_\ell(X,Y)=\langle{\ell},{[X,Y]}\rangle$, $X,Y\in \gg $, 
and at the same time a subalgebra of $\gg $. 
To $\p$ and $\ell$ one associates the induced representation $\pi_{\ell,\p}=\opn{ind}_P^G\ch_\ell$, where  
$P=\exp\p $ is the closed connected subgroup  of $G $ with Lie algebra $\p $, and  
$\chi_\ell(\exp X):=e^{-i\langle{\ell},{X}\rangle}$, $X\in \p$ is the unitary character of $P $ whose differential is $-i\ell\res\p $. 
For any  polarization $\p$
at some $\ell\in\g^* $, the representation $\pi_{\ell,\p} $ is  irreducible. 
Finally,  two irreducible representations $\pi_{\ell,\p} $ and $\pi_{\ell',\p'} $ are equivalent if and only if 
$\ell $ and $\ell' $ are contained in the same coadjoint orbit.  
Then for every  irreducible representation $(\pi, \Hc_\pi)$ of $G$ there is an $\ell\in \gg^\ast$ and a polarization $\p$ at $\ell$ such that $\pi$ is equivalent to $\pi_{\ell,\p}$.

Thus there exists a bijection between the space of coadjoint orbits $\g^*/G $ and the spectrum $\wh G $ of $G $ given by the Kirillov mapping
$$
 \Kc \colon  \O\in \g^*/G \mapsto \widehat{G}, \quad \K(\O)=[\pi_{\ell,\p}],
$$
where $[\pi]$ denotes the unitary equivalence class of the representation $\pi$. 
This is actually an homeomorphism (see \cite{Br73}).
 
\subsection{Ideals in $\L1G $}\label{idlone}$ $

For simplicity of notations, an ideal in this paper is always closed and twosided. 
If $I\subset L^1(G) $ is a subspace, not necessarily closed,  which is invariant under left and right multiplication by elements of $L^1(G)$ 
we say that $I $ is an algebra ideal. 

Let $Prim(G) $ be the space of the primitive ideals of the Banach algebra $\L1G $ equipped with the Jacobson topology. Then the mapping 
$$
\wh G\to  Prim(G), \quad  \pi\to \ker \pi \subset {\L1G},  
$$
is a homeomorphism (see \cite{BLSV78}).
Every primitive ideal $I\subset \L1G $ is maximal and every maximal ideal $M\subset\L1G $ is primitive (see \cite{Di60}). 

\begin{definition}\label{hulldef}
\normalfont
i) For an  ideal $I $ of $\L1G $ denote by hull$(I) $ the (closed) subset of $\wh G $ defined by 
$$
 \opn{hull}(I):=\{\pi\in\wh G\mid \pi(I)=\{0\}\}.
$$
\\
ii) For a closed subset $C\subset \wh G $ the kernel of $C$ is the ideal 
 $$
 \ker C:=\{f\in \L1G\mid \pi(f)=0 \text{ forall } \pi\in C\}.
 $$
 \end{definition}

Let $C\subset \wh G $ be a closed subset. 
Then there exists a \textit{minimal (algebra) ideal} $J(C) $ in $\L1G $ with hull $C $. 
This means that there exists a (unique) twosided algebra ideal $J(C) $ in $\L1G $, which has the property that $\pi(J(C))=\{0\} $ if and only if $\pi \in C$ and which is contained in every algebra ideal $I $ of $\L1G $ with $\opn{hull}(I)\subset C $. 
This minimal ideal $J(C) $ is generated by all the self-adjoint Schwartz functions  $f\in\S(G) $  for which there exists a  Schwartz function $g \in\ker C$ such that 
$$
 g\ast f=f\ast g=f. 
$$ 
(see \cite{Lu80}).

Let 
\begin{eqnarray*}
 j(C):=\ol{J(C)}^{\no{}_1}
 \end{eqnarray*}
be the closure in $\L1G $. 
Then $j(C) $ is contained in every ideal  $I\subset \L1G $ with hull$(I)\subset C $.

Let now $C=\{\pi\} $ be a singleton. 
It had been shown in \cite{Lu83b} that for $N\in\N $ large enough the ideal $\ker \pi 	^N $ is contained in $j(\pi) $ and therefore dense in $j(\pi) $. 
Let now
$ N_\pi$ be the smallest such an integer . 
Then
\begin{equation}\label{npidetj}
 \ker \pi ^{N_\pi-1}\not\subset j(\pi), \quad  \ol{\ker\pi^{N_\pi}}^{\no{}_1}=j(\pi).
 \end{equation}

\subsection{Multiplication of convolution products by polynomials}\label{polmulcon}$ $

Let $p\in \Pc(G) $. 
We  choose a Jordan-H\"older basis $\Xc=\{p=p_m,\dots, p_1\}$ of the $G\times G$ submodule $\Vc_p $ of $\PP(G) $ generated by $p $.
This means that $\X $ is a basis of $\Vc_p $ and that 
\begin{equation}\label{trnscoeff}
\begin{aligned}
 \lambda(s)p_j(t) & =\sum_{i=j}^1 a_{i,j}(s)p_i(t),\\ 
\rho(s)p_j(t) & = \sum_{i=j}^1 b_{i,j}(s)p_i(t),
 \end{aligned}
 \end{equation}
where the functions $s\to a_{i,j}(s)$, $b_{i,j}(s)$ are polynomial functions and $a_{j,j}(s)=b_{j,j}(s)=1 $ for all $s\in G $ and $j=1,\dots, m $.

Let $f\in\S(G) $ and $g\colon G\to\CC $ be  a continuous polynomially growing function. 
Then for $x\in G$ one has
\begin{equation*}\label{left}
\begin{aligned}
 p(x)(f\ast g)(x)&= p(x)\int\limits_G f(s)g(s\inv x)ds\\
 &= \int\limits_G p(s s\inv x)f(s)g(s\inv x)ds\\
 &=\sum\limits_{i=1}^m\int\limits_G f(s)p_i(s)b_{i,1}(s\inv x)g(s\inv x)ds
 \end{aligned}
 \end{equation*}
 and similarly
 \begin{equation*}\label{right}
\begin{aligned}
  p(x)(f\ast g)(x)&=p(x)\int\limits_G f(s)g(s\inv x)ds\\
 &=\int\limits_G p(s s\inv x)f(s)g(s\inv x)ds\\
 &=\sum\limits_{i=1}^m\int\limits_G f(s)a_{i,1}(s\inv)p_i(s\inv t)g(s\inv x)ds.
 \end{aligned}
 \end{equation*}

Hence
\begin{equation}\label{cpmulfg}
\begin {aligned}
 p(f\ast g)&=(pf)\ast g+\sum\limits_{i=m-1}^1 (p_i f)\ast(b_{i,m}g)\\
 &= f\ast (p g)+\sum\limits_{i=m-1}^1 (\check a_{i,m} f)\ast(p_ig).
 \end{aligned}
 \end{equation}

\section{The minimal ideal}
\subsection{A class of polynomials given by the growth of coefficients}\label{subsect1}

For a fixed $\ell\in \gg^*$, let $\pi=\pi_\ell\colon G \mapsto \BB(\Hc_\pi)$ be the (unique up to unitary equivalence) unitary 
irreducible representation  corresponding to the coadjoint orbit $\Oc_\ell= \Ad^*(G)\ell$ of $\ell$. 
Then let  
 $\Hc_\pi^\infty$ be the space of smooth vectors for  $\pi$, and $\Hc_\pi^{-\infty}$ its dual. 
We denote by  
$\BB(\Hc_\pi)_\infty$ the space of smooth operators corresponding to $\pi$. 
This is  nothing else than the set  smooth vectors for the irreducible representation 
$\pi \otimes \overline{\pi}   \colon G\times G \mapsto \BB(\Sg_2(\Hc_\pi))$, 
$$ 
(\pi \otimes \overline{\pi}) (g_1, g_2) A  = \pi(g_1) A\pi(g_2^{-1}), \quad g_1, g_2\in G, 
\; A\in  \Sg_2(\Hc_\pi), 
$$
with the topology of $(\Sg_2(\Hc_\pi))^\infty$.
Here $\Sg_2(\Hc_\pi)$ is the space of Hilbert-Schmidt operators on $\Hc_\pi$.

For $A \in \BB(\Hc_\pi)_\infty$ we define the coefficients 
$$c_A(x)=c_{A}^\pi(x) ={\tr(\pi(x)\circ  A)} =\tr(A\circ  \pi(x)),  \quad x\in G. $$
When $A= (\cdot\mid \eta)\xi$, $\xi, \eta\in \Hc_\pi^\infty$ is a projection we simply put 
$c_{A}^\pi= c_{\xi, \eta}^\pi= c_{\xi, \eta}$,  which means
$$
 c_{\xi,\et}^\pi(x)=\scalar{\pi(x)\xi}{\eta}, \quad x\in G.
$$
In particular,  
for $f\in \L1G $ we have  that
$$
\begin{aligned}
 (\check f\ast c_A)(x)&=\int\limits_G f(u\inv){\tr(\pi(u\inv x)\circ A)du}\\
 &={\tr(\pi( x)\circ (A\circ\int_G  f(u\inv )\pi(u\inv) du))}\\
  &={\tr(\pi( x)\circ (A\circ \pi(f)) }\\
  &=c_{ A\circ\pi(f)}(x),  
 \end{aligned}
$$
for every $x\in G $.
Similarly
$$
 c_{A}\ast \check f=c_{\pi(f)\circ A}.
$$
We get thus
\begin{equation}\label{fconco}
 \check f\ast c^\pi_{\xi,\eta}=c_{\xi,(\pi(f)^*)\eta},\quad  c_{\xi,\eta }^\pi\ast \check f=c_{\pi(f)\xi,\eta}. 
 \end{equation}

Consider  an arbitrary fixed
$$\varphi\in (\ker{\pi})^\perp\cap BC^\infty(G).$$
Then, since $\ker\pi$ is  a maximal closed ideal in $L^1(G)$, it follows that 
$$
 \ker \pi=\{f\in L^1(G)\mid \dual{\lambda(x)\rho(y)\varphi}{f} =0,\, \forall x,y\in G\}.
$$

Let 
$V_\varphi \subseteq\Pc(G)$ be the subspace of all polynomial functions $p\in \Pc(G)$ such that
$$
 p(\lambda(x)\rho(y)\varphi)\in L^\infty(G) \text{ for all }x,y\in G.
$$
Then $V_\varphi$ is left and right $G$-invariant, and $\Vc_p\subseteq V_\varphi$ whenever $p\in V_\varphi$.

\begin{theorem}\label{vainv}
The space $V_\varphi$ is contained in $ V_{c_{\eta,\eta'}}$  for every $\eta, \eta'\in\Hc^\infty_\pi$.
Specifically, if $p \in V_\varphi$ and $\{p=p_k, p_{k-1}, \dots, p_1\}$ is a Jordan-H\"older
basis for the finite dimensional subspace $\Vc_p$,  
then there is a constant $C=C(\varphi)$ and $q_{\Hc_\pi^{\infty}}$ a seminorm on $\Hc_\pi^{\infty}$ such that 
$$ \norm{p_j c_{\eta, \eta'}}_{L^\infty(G)} \le C q_{\Hc_\pi^{\infty}}(\eta)q_{\Hc_\pi^{\infty}}(\eta') \, 
\sup\{\norm{p_i \varphi}_{L^\infty(G)}\mid j\le i\le k\}$$
for every $\eta$, $\eta'\in\Hc_\pi^\infty$  and $j=1, \dots, k$.
\end{theorem}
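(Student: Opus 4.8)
The plan is to reduce the assertion to a factorisation of the rank-one operator $(\cdot\mid\eta')\eta$ through an operator attached to $\varphi$, and then to push polynomial bounds through that factorisation using the identities \eqref{fconco} and \eqref{cpmulfg}.

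Since $\ker\pi$ is a maximal ideal, $\varphi$ is a (generalised) coefficient of $\pi$: there is a nonzero operator $A=A_\varphi\colon\Hc_\pi^\infty\to\Hc_\pi^{-\infty}$ with $\varphi=c_A^\pi$, and, exactly as in the computation preceding \eqref{fconco}, $\check h\ast c_B^\pi=c_{B\circ\pi(h)}^\pi$ and $c_B^\pi\ast\check h=c_{\pi(h)\circ B}^\pi$ for $h\in\Sc(G)$. Fix $v,v'\in\Hc_\pi^\infty$ with $\scalar{Av}{v'}\neq 0$. For $\eta,\eta'\in\Hc_\pi^\infty$ let $T_{\eta'}$ be the rank-one operator $x\mapsto\scalar{x}{\eta'}v$ and $S_\eta$ the rank-one operator $y\mapsto\scalar{Av}{v'}^{-1}\scalar{y}{v'}\,\eta$; these map $\Hc_\pi^{-\infty}\to\Hc_\pi^\infty$ continuously (their defining vectors being smooth), so $S_\eta,T_{\eta'}\in\BB(\Hc_\pi)_\infty$, and one checks at once that $S_\eta\circ A\circ T_{\eta'}=(\cdot\mid\eta')\eta$. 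Using $\BB(\Hc_\pi)_\infty=\pi(\Sc(G))$ (a known property for nilpotent $G$, via the Schwartz-kernel picture of the monomial model of $\pi$) together with the open mapping theorem one may select $g_1=g_1(\eta)$ and $g_2=g_2(\eta')$ in $\Sc(G)$, depending linearly and continuously on $\eta$ resp.\ $\eta'$, with $\pi(g_1)=S_\eta$ and $\pi(g_2)=T_{\eta'}$; hence for every Schwartz seminorm $N$ on $\Sc(G)$ there is a continuous seminorm $q_N$ on $\Hc_\pi^\infty$ with $\norm{g_1(\eta)}_{N}\le q_N(\eta)$ and similarly for $g_2$. Since $c_{\eta,\eta'}=c_{(\cdot\mid\eta')\eta}^\pi$, the operator identity and the two rules above give
\[ c_{\eta,\eta'}=c^\pi_{\,S_\eta\circ A\circ T_{\eta'}}=\check{g_2}\ast\varphi\ast\check{g_1}. \]

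To transport the polynomial, observe that the defining relations \eqref{trnscoeff} read $p_j(st)=\sum_i a_{i,j}(s^{-1})p_i(t)$ and $p_i(st)=\sum_l b_{l,i}(t)p_l(s)$, so the computation leading to \eqref{cpmulfg} yields, for Schwartz $f,f',g,g'$,
\[ p_j(f\ast g)=\sum_i(\check a_{i,j}f)\ast(p_i g),\qquad p_i(f'\ast g')=\sum_l(p_l f')\ast(b_{l,i}g'). \]
Applying the first with $f=\check{g_2}$, $g=\varphi\ast\check{g_1}$ and then the second with $f'=\varphi$, $g'=\check{g_1}$ expresses $p_jc_{\eta,\eta'}$ as a finite sum of terms $(\check a_{i,j}\check{g_2})\ast(p_l\varphi)\ast(b_{l,i}\check{g_1})$, the $p_l$ running over the Jordan--Hölder elements that occur. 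The polynomials $\check a_{i,j},b_{l,i}$ are fixed, so $\check a_{i,j}\check{g_2}$ and $b_{l,i}\check{g_1}$ lie in $\Sc(G)\subset L^1(G)$ with $L^1$-norms dominated by Schwartz seminorms of $g_2,g_1$, hence by $q_N(\eta'),q_N(\eta)$ for a suitable $N=N(p)$. Using $\norm{u\ast w\ast z}_{L^\infty(G)}\le\norm{u}_{L^1(G)}\norm{w}_{L^\infty(G)}\norm{z}_{L^1(G)}$, taking $q_{\Hc_\pi^\infty}$ above all the $q_N$ that appear, and collecting constants, one obtains
\[ \norm{p_jc_{\eta,\eta'}}_{L^\infty(G)}\le C\,q_{\Hc_\pi^\infty}(\eta)\,q_{\Hc_\pi^\infty}(\eta')\,\sup\{\norm{p_i\varphi}_{L^\infty(G)}\mid j\le i\le k\}, \]
the $p$-dependence of the argument being absorbed into the seminorm $q_{\Hc_\pi^\infty}$ (and into the finitely many $a_{i,j},b_{l,i}$). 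Running the same estimate on the translates $\lambda(x)\rho(y)c_{\eta,\eta'}$, which are again coefficients of smooth vectors of $\pi$, shows $p\in V_{c_{\eta,\eta'}}$, i.e.\ $V_\varphi\subseteq V_{c_{\eta,\eta'}}$.

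The hard part is the factorisation step: one needs that $g\mapsto\pi(g)$ carries $\Sc(G)$ onto a class of operators large enough to contain the rank-one operators $S_\eta,T_{\eta'}$ built from smooth vectors, and that a right inverse can be chosen \emph{linearly} in $\eta$ resp.\ $\eta'$ with simultaneous control of \emph{all} Schwartz seminorms; this is precisely what ties the argument to the concrete monomial realisation of $\pi$ and its kernel calculus. A subsidiary but genuinely delicate point is the combinatorics of which Jordan--Hölder elements $p_l$ survive on the middle factor after the two uses of \eqref{cpmulfg}: the composition series of $\Vc_p$ must be oriented, and the two forms of \eqref{cpmulfg} applied in the correct order, so that exactly the range $j\le i\le k$ of the statement is produced.
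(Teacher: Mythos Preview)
Your overall architecture matches the paper's: write $c_{\eta,\eta'}$ as a triple convolution with $\varphi$ in the middle, then push $p_j$ through using the two forms of \eqref{cpmulfg} and bound by $\|u\|_{L^1}\|w\|_{L^\infty}\|z\|_{L^1}$. The differences are in how the factorisation is obtained, and here your version has a genuine gap and an unnecessary detour.

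The gap is your invocation of the open mapping theorem to produce $g_1(\eta),g_2(\eta')$ depending \emph{linearly and continuously} on $\eta,\eta'$. Open mapping gives only that $\pi\colon\Sc(G)\to\BB(\Hc_\pi)_\infty$ is open; it does not produce a continuous linear section, and such a section exists only if $\ker\pi\cap\Sc(G)$ is complemented in $\Sc(G)$, which you have not argued. The paper bypasses this entirely by citing the Howe--Pedersen kernel calculus: for smooth vectors $\xi,\eta$ there is an explicit $F_{\xi,\eta}\in\Sc(G)$ with $\pi(\check F_{\xi,\eta})=(\cdot\mid\eta)\xi$ and $\sup_G(1+|s|)^N|\check F_{\xi,\eta}(s)|\le q(\xi)q(\eta)$ for a seminorm $q$ on $\Hc_\pi^\infty$. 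This is exactly the bilinear lift with Schwartz-seminorm control that your argument needs, and it is a nontrivial input, not a consequence of abstract functional analysis.

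The detour is your first step, writing $\varphi=c_A^\pi$ for some $A\colon\Hc_\pi^\infty\to\Hc_\pi^{-\infty}$ and then factoring the rank-one operator through $A$. The paper never identifies $\varphi$ with a generalised coefficient. Instead it observes that $\pi(\check F_{\xi,\eta}\ast f\ast\check F_{\eta',\xi'})$ is a scalar multiple of $P_{\xi,\xi'}$ for every $f\in L^1(G)$; pairing with $\varphi\in(\ker\pi)^\perp$ then gives directly
\[
\check F_{\eta',\xi'}\ast\varphi\ast\check F_{\xi,\eta}=\langle\varphi,F_{\xi,\xi'}\rangle\,c_{\eta,\eta'},
\]
after which one fixes $\xi,\xi'$ (depending only on $\varphi$) so that the scalar is $1$. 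This is both shorter and avoids the question of whether every smooth element of $(\ker\pi)^\perp$ is a generalised coefficient. Once this identity is in hand, the polynomial bookkeeping via \eqref{cpmulfg} is the same as yours.
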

\begin{proof}
Let $\xi,\xi',\eta, \eta'$ be  vectors  in $\Hc_\pi^\infty$. 
There exists a Schwartz function $F_{\xi,\eta}$ on $G$ such that 
$\pi(\check F_{\xi, \eta})=P_{\xi,\eta}= \scalar{\cdot}{\eta}\xi$ 
and for every $N\ge 0$ there exist a seminorm $q_{\Hc_\pi^{\infty}}$  on $\Hc_\pi^{\infty}$ and $C>0$ such that 
$$ \sup_G \vert (1+|s|)^N \check F_{\xi, \eta} (s) \vert \le  q_{\Hc_\pi^{\infty}}(\xi) q_{\Hc_\pi^{\infty}}(\eta).$$
(See \cite{Ho77},  and  also \cite{Pe94}). 
Similarly for other vectors $\eta,\xi',\eta', \xi$.   
Then for any $f\in L^1(G)$ we have that
$$
 \pi(\check{F}_{\xi,\eta}\ast f\ast \check{F}_{\eta',\xi'})=\dual{\check{c}_{\eta,\eta'}}{f}P_{\xi,\xi'}.
$$
Hence, since $\varphi\in (\ker{\pi})^\perp$, 
$$
 \dual{{F_{\xi,\eta}\ast \check\varphi \ast F_{\eta', \xi'}}}{f}=
\dual{{\check{c}_{\eta,\eta'}}}{f} \dual{\check \varphi}{{\check F}_{\xi,\xi'}} \quad \text{for all } f \in L^1(G).
$$
Therefore
\begin{equation}\label{vaniv:1}
\check{F}_{\eta', \xi'}\ast \varphi \ast \check{F}_{\xi,\eta}=\langle{\varphi},F_{\xi,\xi'}\rangle c_{\eta,\eta'}.
\end{equation}

By \eqref{cpmulfg} we have 
\begin{align*} \allowdisplaybreaks
p_j(t)(\check F_{\eta', \xi'}\ast \varphi\ast \check{F}_{\xi, \eta})(t)&=
\int\limits_G \check{F}_{\eta', \xi'}(s)(p_j (\varphi\ast \check F_{\xi', \eta'}))(s^{-1}t)ds\\
&\quad +\sum_{i=j+1}^k\int_G (\check{a}_{i,j}\check F_{\eta', \xi'})(s)( p_i(\varphi\ast
 \check{F}_{\xi, \eta}))(s^{-1} t)ds, 
 \end{align*}
and similarly, 
\begin{align*}
 (p_i (\varphi\ast \check{F}_{\xi, \eta})) (s^{-1} t)
&=\int\limits_G  p_i(y)\varphi(y)\check F_{\xi, \eta}(y^{-1} s^{-1} t)dy\\
&\qquad +\sum_{l=i+1}^k\int\limits_G   p_l(y )\varphi(y)(b_{l, i} \check F_{\xi, \eta})(y^{-1} s^{-1} t))dy.
\end{align*}
Hence there are $N\ge 0$ and $C, C', C''>0$ such that 
$$ 
\begin{aligned}
\vert p_j(t)(\check{F}_{\eta', \xi'}\ast \varphi\ast \check{F}_{\xi, \eta})(t)\vert 
\le C \sup_G\ (1+|s|)^N\vert \check{F}_{\eta', \xi'} (s)\vert \, \sup_{j\le i\le k} 
 \norm{p_j(\varphi\ast \check{F}_{\xi, \eta})}_{L^\infty(G)} \\
\le C' \sup_G (1+|s|)^N \vert \check{F}_{\eta', \xi'} (s)\vert \, \sup_G (1+|s|)^N \vert \check{F}_{\xi, \eta} (s)\vert 
\, \sup_{j\le i\le k} 
 \norm{p_j\varphi}_{L^\infty(G)}\\
 \le C'' q_{\Hc_\pi^{\infty}}(\xi)q_{\Hc_\pi^{\infty}}(\xi')q_{\Hc_\pi^{\infty}}(\eta)q_{\Hc_\pi^{\infty}}(\eta')
\sup_{j\le i\le k} 
 \norm{p_j\varphi}_{L^\infty(G)}, 
 \end{aligned}
 $$
where $q_{\Hc_\pi^{\infty}}$ is the appropriate seminorm on $\Hc_\pi^{\infty}$.
Using \eqref{vaniv:1} with $\xi$, $\xi'\in \Hc_\pi^\infty$ fixed, depending on $\varphi$ such that
$\dual{\varphi}{{F}_{\xi, \xi'}}=1$, 
one gets 
$$
\begin{aligned}
\norm{p_j c_{\eta, \eta'}}_{L^\infty(G)} & \le C''' q_{\Hc_\pi^{\infty}}(\xi)q_{\Hc_\pi^{\infty}}(\xi')q_{\Hc_\pi^{\infty}}(\eta)q_{\Hc_\pi^{\infty}}(\eta') \, 
\sup\{\norm{p_i \varphi}_{L^\infty(G)}\mid j\le i\le k\}\\
& \le C(\varphi) q_{\Hc_\pi^{\infty}}(\eta)q_{\Hc_\pi^{\infty}}(\eta') \, 
\sup\{\norm{p_i \varphi}_{L^\infty(G)}\mid j\le i\le k\},
\end{aligned}
$$
which finishes the proof. 
 \end{proof}

\begin{definition}\label{vpidef}
Define the space of polynomial functions $V_\pi$ by
$$
 V_\pi:=\{p\in\Pc(G)\mid pc_{\xi,\eta}\in L^\infty(G),\,  \forall\,  \xi, \eta \in\Hc_\pi^\infty\}.
$$
 \end{definition}

 Note that $V_\pi$  is a left and right invariant subspace of $\Pc(G)$, since
 $$ (\lambda(x)p)c_{\xi, \eta} = \lambda(x) (p \lambda(x^{-1})(c_{\xi, \eta})) =
 \lambda(x)(p c_{\xi, \pi(x^{-1})\eta})$$
 and $\pi(x^{-1})\eta\in \Hc_\pi^\infty$, and similarly for $\rho(x) p$.

\begin{lemma}\label{lemma0}
Let $p\in \Pc(G)$ be arbitrary  and fixed. 
Then for every $N_0\in \NN$ we have
\begin{equation}\label{lemma0:ind}
 p((\ker\pi\cap \Sc(G))^{ N_{0}d_p})\subseteq (\ker{\pi}\cap \Sc(G))^{N_0}.
 \end{equation}
\end{lemma}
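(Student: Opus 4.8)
The plan is to prove \eqref{lemma0:ind} by induction on $N_0$, using the multiplication formula \eqref{cpmulfg} as the engine for moving the polynomial $p$ past a convolution at the cost of lowering the $G$-degree of the polynomial factor. First I would record the base case $N_0=1$: here one must show $p((\ker\pi\cap\Sc(G))^{d_p})\subseteq \ker\pi\cap\Sc(G)$, where $d_p=\dim\Vc_p$ is the $G$-degree of $p$. The idea is that if $f_1,\dots,f_{d_p}\in\ker\pi\cap\Sc(G)$, then applying \eqref{cpmulfg} to $p(f_1\ast(f_2\ast\cdots\ast f_{d_p}))$ produces $(pf_1)\ast(f_2\ast\cdots)$ plus a sum of terms of the form $(p_i f_1)\ast(b_{i,m}(f_2\ast\cdots))$ with $p_i$ of strictly smaller $G$-degree (the Jordan–Hölder basis elements $p_i$, $i<m$, span proper submodules). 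Since $\ker\pi\cap\Sc(G)$ is a two-sided ideal of $\Sc(G)$ containing $(pf_1)\ast(\cdots)$, and since $b_{i,m}(f_2\ast\cdots\ast f_{d_p})$ lies again in $\ker\pi\cap\Sc(G)$ (as $\ker\pi$ is an ideal and multiplication by a polynomial preserves $\Sc(G)$), one reduces to handling $(p_i f_1)\ast(\text{something in }\ker\pi\cap\Sc(G))$; iterating this on the remaining $d_p-1$ factors and using that each step strictly decreases the $G$-degree, after at most $d_p$ steps the polynomial factor is a constant and every remaining term manifestly lies in $\ker\pi\cap\Sc(G)$. Thus the base case is really a secondary induction on $d_p$.

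For the inductive step, suppose \eqref{lemma0:ind} holds for $N_0$; I would take $N_0+1$ copies collected as a convolution $h=g\ast f$ where $g\in(\ker\pi\cap\Sc(G))^{N_0 d_p}$ and $f\in(\ker\pi\cap\Sc(G))^{d_p}$. Applying \eqref{cpmulfg} in the form $p(g\ast f)=g\ast(pf)+\sum_{i<m}(\check a_{i,m}g)\ast(p_i f)$, the first term $g\ast(pf)$ lies in $(\ker\pi\cap\Sc(G))^{N_0}\ast(\ker\pi\cap\Sc(G))\subseteq(\ker\pi\cap\Sc(G))^{N_0+1}$ by the inductive hypothesis applied to $g$ (here $pf$, being $p$ times $d_p$ factors from the ideal, is in $\ker\pi\cap\Sc(G)$ by the base case). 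For the error terms, $\check a_{i,m}g$ is a polynomial times an element of $(\ker\pi\cap\Sc(G))^{N_0 d_p}$, hence again in that space, while $p_i f$ has lower $G$-degree; one then iterates, peeling off the degree of $p$ in $\leq d_p$ steps as in the base case, to land in $(\ker\pi\cap\Sc(G))^{N_0+1}$.

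The main obstacle—and the point requiring care—is the bookkeeping of the double induction: one must organize the argument so that the secondary induction on the $G$-degree of the polynomial does not disturb the primary power count, i.e., so that multiplying $g$ by the polynomial coefficients $\check a_{i,j}$ or $b_{i,j}$ never leaves the set $(\ker\pi\cap\Sc(G))^{N_0 d_p}$ (this is fine, since that set is stable under multiplication by arbitrary polynomials as $\ker\pi$ is an ideal and $\Sc(G)\cdot\Pc(G)\subseteq\Sc(G)$), and so that each application of \eqref{cpmulfg} genuinely consumes one "unit" of the $d_p$ budget while preserving enough convolution factors to feed into the inductive hypothesis. It is convenient to phrase the clean statement one actually inducts on as: for every $p\in\Pc(G)$ and every $N_0\geq 1$, $p\cdot\bigl((\ker\pi\cap\Sc(G))^{N_0 d_p}\bigr)\subseteq(\ker\pi\cap\Sc(G))^{N_0}$, and to prove it by induction on $N_0 d_p$ (ordered lexicographically, or simply on $N_0$ with an inner induction on $d_p$), reducing the polynomial degree via \eqref{cpmulfg} at each stage. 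No growth estimates are needed here—this is a purely algebraic lemma—so the only real work is the combinatorial organization of the two nested inductions.
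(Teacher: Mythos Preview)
Your argument has a genuine gap: you repeatedly assert that $(\ker\pi\cap\Sc(G))^{k}$ is stable under \emph{pointwise} multiplication by polynomials, justifying this by ``$\ker\pi$ is an ideal and $\Sc(G)\cdot\Pc(G)\subseteq\Sc(G)$''. But $\ker\pi$ is an ideal for \emph{convolution}, not for pointwise multiplication; there is no reason for $qf\in\ker\pi$ when $f\in\ker\pi$ and $q\in\Pc(G)$ (on an abelian $G$, pointwise multiplication by a coordinate becomes differentiation on the Fourier side, which does not preserve vanishing at a single point). This false stability is invoked in your base case (for $b_{i,m}(f_2\ast\cdots\ast f_{d_p})$) and again in your inductive step (for $\check a_{i,m}g$), and without it neither step goes through as written. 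Indeed, controlling what pointwise multiplication by a polynomial does to convolution powers of $\ker\pi\cap\Sc(G)$ is exactly the content of the lemma.

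The paper's proof sidesteps this entirely by dropping the outer induction on $N_0$ and doing a single induction on $d_p$, valid for all $N_0$ simultaneously. Write a generic element of $(\ker\pi\cap\Sc(G))^{N_0 d_p}$ as $f_{d_p}\ast\cdots\ast f_1$ with each $f_j\in(\ker\pi\cap\Sc(G))^{N_0}$, and apply the second form of \eqref{cpmulfg} with $f=f_{d_p}$ and $g=f_{d_p-1}\ast\cdots\ast f_1$:
\[
p(f_{d_p}\ast g)=\sum_{i=1}^{d_p}(\check a_{i,d_p}f_{d_p})\ast\bigl(p_i\,g\bigr).
\]
The top term ($i=d_p$, $\check a_{d_p,d_p}=1$) is $f_{d_p}\ast(p\,g)$, which lies in $(\ker\pi\cap\Sc(G))^{N_0}\ast\Sc(G)\subseteq(\ker\pi\cap\Sc(G))^{N_0}$ simply because $f_{d_p}\in(\ker\pi\cap\Sc(G))^{N_0}$; no information about $p\,g$ beyond membership in $\Sc(G)$ is used. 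For $i<d_p$ one has $d_{p_i}\le d_p-1$, and $g\in(\ker\pi\cap\Sc(G))^{N_0(d_p-1)}\subseteq(\ker\pi\cap\Sc(G))^{N_0 d_{p_i}}$, so the induction hypothesis on $p_i$ gives $p_i\,g\in(\ker\pi\cap\Sc(G))^{N_0}$; convolving on the left with $\check a_{i,d_p}f_{d_p}\in\Sc(G)$ stays in $(\ker\pi\cap\Sc(G))^{N_0}$. The key point is that the polynomial coefficients $\check a_{i,d_p}$ only ever hit the \emph{single} block $f_{d_p}$ and are then absorbed by a convolution with something already known to be in $(\ker\pi\cap\Sc(G))^{N_0}$, so one never needs pointwise stability of the ideal under polynomial multiplication.
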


\begin{proof}
We prove \eqref{lemma0:ind} by induction on $d_p=\opn{dim}\Vc_p$.

If $d_p=1$, then $p$ is the constant function and \eqref{lemma0:ind} holds.  
Assume now that $d_p>1$.
Consider a Jordan-H\"older basis $\{p_{d_{p}}=p, \dots, p_{1}\}$ of $\Vc_p$. 
Let $f_i$, $i=1,\dots, d_p$ be elements of $(\ker\pi\cap \S(G))^{N_0}$.
We have seen (\ref{cpmulfg}) that 
$$
 p(f_{d_p}\ast\cdots\ast f_{1}  )=
\sum_{i=1}^{d_p}(\check a_{i,d_p}f_{d_{p}})\ast(p_i(f_{d_{p}-1} \ast \cdots\ast f_{1})).
$$
Here $a_{d_{p}, d_{p}}$ is the constant function 1 and $d_{p_i}<d_p$ for $i< d_p$. 
Hence by the induction hypothesis we know that 
$ p(f_{d_p}\ast\cdots\ast f_{1})\in (\ker{\pi}\cap \Sc(G))^{N_0}$, 
which proves \eqref{lemma0:ind}.
\end{proof}

The same proof gives the next corollary. 

\begin{corollary}\label{cor:lemma0}
 Let $p\in \Pc(G)$.
Then for every $N_0\in \NN$ we have
$$ p ((\ker{\pi} \cap C_0(G))^{N_0 d_p}) \subseteq (\ker{\pi})^{N_0}.$$
\end{corollary}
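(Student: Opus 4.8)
The plan is to mimic the proof of Lemma \ref{lemma0} almost verbatim, replacing the algebra $\Sc(G)$ by $C_0(G)$ and the ``smooth'' multiplication estimates by the elementary fact that $C_0(G)$ is a Banach algebra under convolution on which polynomial multiplication behaves via the transition coefficients \eqref{trnscoeff}. First I would argue by induction on $d_p=\dim\Vc_p$. For $d_p=1$ the polynomial $p$ is constant and there is nothing to prove. For the inductive step, pick a Jordan--H\"older basis $\{p_{d_p}=p,\dots,p_1\}$ of $\Vc_p$ and, given $f_{d_p},\dots,f_1\in(\ker\pi\cap C_0(G))^{N_0}$, expand $p(f_{d_p}\ast\cdots\ast f_1)$ by \eqref{cpmulfg} exactly as in Lemma \ref{lemma0}:
$$
 p(f_{d_p}\ast\cdots\ast f_1)=\sum_{i=1}^{d_p}(\check a_{i,d_p}f_{d_p})\ast\bigl(p_i(f_{d_p-1}\ast\cdots\ast f_1)\bigr).
$$
Here $d_{p_i}<d_p$ for $i<d_p$, so the induction hypothesis applies to each $p_i$, giving $p_i(f_{d_p-1}\ast\cdots\ast f_1)\in(\ker\pi)^{N_0}$; and for $i=d_p$ the coefficient $a_{d_p,d_p}$ is the constant function $1$, so that term is $f_{d_p}\ast(f_{d_p-1}\ast\cdots\ast f_1)\in(\ker\pi)^{N_0}$. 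Each summand thus lies in $(\ker\pi)^{N_0}$ (it is a convolution of something in $\ker\pi$ with something in $L^1(G)$, hence still in the ideal $(\ker\pi)^{N_0}$), and therefore so does the sum.

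The one point that genuinely needs the hypothesis $C_0(G)$ rather than a general polynomially growing function is that all the intermediate objects stay in $C_0(G)$ (equivalently, in $L^1(G)$, or at least in a space on which $\ker\pi$ acts and in which one can speak of the ideal $(\ker\pi)^{N_0}$): the functions $\check a_{i,d_p}f_{d_p}$ are products of polynomials with $C_0$-functions, which need not be bounded, so one cannot literally stay inside $C_0(G)$, but convolving with the Schwartz-type rapidly decreasing pieces or simply observing that the final expression on the left is defined and lies in $(\ker\pi)^{N_0}$ as an element of $L^1(G)$ suffices. Concretely, I would note that $p_i(f_{d_p-1}\ast\cdots\ast f_1)\in(\ker\pi\cap C_0(G))^{N_0}$ by induction — the point of phrasing the induction with $C_0(G)$ on the left is precisely that products of polynomials with convolutions of $C_0$-functions are again, after the cancellation in \eqref{cpmulfg}, convolutions lying in $\ker\pi\cap C_0(G)$ raised to the appropriate power — and then the convolution with $\check a_{i,d_p}f_{d_p}\in L^1(G)$ keeps us in the two-sided ideal $(\ker\pi)^{N_0}$ of $L^1(G)$.

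The main obstacle, and the reason the statement is phrased with a weaker conclusion ($(\ker\pi)^{N_0}$ rather than $(\ker\pi\cap C_0(G))^{N_0}$) than Lemma \ref{lemma0}, is exactly this loss of decay: multiplying a $C_0$-function by a polynomial destroys membership in $C_0(G)$, so the clean self-reproducing structure available for $\Sc(G)$ (where Schwartz decay beats polynomial growth) is only partially available here. I would handle this by keeping the bookkeeping at the level of convolution products: in \eqref{cpmulfg} the polynomial is always absorbed into one factor of a convolution, and what matters is that the \emph{other} factor still witnesses membership in $\ker\pi$; since an ideal is stable under convolution by arbitrary $L^1$ elements, the product $p(f_{d_p}\ast\cdots\ast f_1)$ lands in $(\ker\pi)^{N_0}$ even though its individual building blocks may fail to be bounded. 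Thus ``the same proof'' as Lemma \ref{lemma0} goes through once one is careful to track which space each intermediate quantity lives in, and the degree count $N_0 d_p$ is inherited directly from the telescoping of the induction over the chain $d_{p_i}<d_p$.
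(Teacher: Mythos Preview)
Your approach is exactly the paper's, which simply says ``the same proof gives the next corollary.'' Your first paragraph contains the argument and is essentially correct, with one slip: for $i=d_p$ the summand is $f_{d_p}\ast\bigl(p(f_{d_p-1}\ast\cdots\ast f_1)\bigr)$, not $f_{d_p}\ast(f_{d_p-1}\ast\cdots\ast f_1)$ as you wrote --- the polynomial $p=p_{d_p}$ does not disappear. The conclusion survives because $f_{d_p}\in(\ker\pi\cap C_0(G))^{N_0}\subset(\ker\pi)^{N_0}$ and the second factor lies in $L^1(G)$, so the ideal property applies.

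The discussion in your second and third paragraphs rests on a misreading of the notation: in this paper $C_0(G)$ denotes continuous functions with \emph{compact support} (compare the use of $C_0^k(G)$ and $C_0^\infty(G)$ in the proof of Corollary~\ref{pker}), not continuous functions vanishing at infinity. With the correct reading, a polynomial times a $C_0(G)$-function is again in $C_0(G)\subset L^1(G)$, so there is no ``loss of decay'' to worry about and $\check a_{i,d_p}f_{d_p}\in L^1(G)$ is immediate. In fact the same induction then yields the stronger conclusion $(\ker\pi\cap C_0(G))^{N_0}$; the paper only states what it needs later. Separately, your claim that the induction hypothesis gives $p_i(f_{d_p-1}\ast\cdots\ast f_1)\in(\ker\pi\cap C_0(G))^{N_0}$ is inconsistent with the statement you are proving, whose conclusion is only $(\ker\pi)^{N_0}$; either strengthen the statement or use only the weaker hypothesis.
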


\begin{corollary}\label{pker}
For every $p\in \Pc(G)$  we have that 
$$p((\ker {d\pi))^{d_p N_\pi}\ast \Sc(G)}\subseteq j(\pi), $$
where $d_p$ is the $G$-degree of $p $.
\end{corollary}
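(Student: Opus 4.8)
The plan is to deduce Corollary~\ref{pker} from Lemma~\ref{lemma0} together with the description \eqref{npidetj} of $j(\pi)$, the only new point being a containment that brings the left-hand side into the setting of Lemma~\ref{lemma0}. Concretely, I would first check that every element of $\ker{d\pi}\ast\Sc(G)$ is a Schwartz function lying in $\ker\pi$: if $D$ satisfies $d\pi(D)=0$ and $f\in\Sc(G)$, then $D\ast f\in\Sc(G)$ (one applies an invariant differential operator to a Schwartz function), and by compatibility of $d\pi$ on $U(\gg)$ with the integrated representation one has $\pi(D\ast f)=d\pi(D)\,\pi(f)=0$; hence $\ker{d\pi}\ast\Sc(G)\subseteq\ker\pi\cap\Sc(G)$. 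Taking the $(d_pN_\pi)$-th convolution power inside $\Sc(G)$, this yields
\[
  \bigl(\ker{d\pi}\ast\Sc(G)\bigr)^{d_pN_\pi}\subseteq\bigl(\ker\pi\cap\Sc(G)\bigr)^{d_pN_\pi}.
\]

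Next I would apply Lemma~\ref{lemma0} with $N_0=N_\pi$ to the given $p$, whose $G$-degree $d_p=\dim\Vc_p$ is exactly the integer appearing in the exponent; this gives
\[
  p\Bigl(\bigl(\ker\pi\cap\Sc(G)\bigr)^{d_pN_\pi}\Bigr)\subseteq\bigl(\ker\pi\cap\Sc(G)\bigr)^{N_\pi}\subseteq\ker\pi^{N_\pi}.
\]
By \eqref{npidetj} one has $\ker\pi^{N_\pi}\subseteq j(\pi)$, and chaining these inclusions gives $p\bigl((\ker{d\pi}\ast\Sc(G))^{d_pN_\pi}\bigr)\subseteq j(\pi)$. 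One may instead route the last step through Corollary~\ref{cor:lemma0}, after noting that $\ker{d\pi}\ast\Sc(G)\subseteq\ker\pi\cap C_0(G)$ as well; this would even permit taking one of the factors in $C_0(G)$ rather than in $\Sc(G)$, with the same conclusion.

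The step that really matters is the first one, i.e.\ identifying $\ker{d\pi}\ast\Sc(G)$ as a subspace of $\ker\pi\cap\Sc(G)$; the rest is bookkeeping with Lemma~\ref{lemma0} and \eqref{npidetj}. It is worth stressing why the argument is organized so as to produce membership in $(\ker\pi\cap\Sc(G))^{d_pN_\pi}$ \emph{before} $p$ is applied: multiplication by the polynomial $p$ is not continuous on $\L1G$, so one cannot instead approximate, multiply by $p$, and pass to a limit. The convolution with $\Sc(G)$ present in the statement is precisely what supplies the Schwartz regularity that makes Lemma~\ref{lemma0} available.
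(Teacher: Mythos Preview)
Your argument proves the inclusion
\[
  p\bigl((\ker{d\pi}\ast\Sc(G))^{d_pN_\pi}\bigr)\subseteq j(\pi),
\]
but the corollary is about $p\bigl((\ker{d\pi})^{d_pN_\pi}\ast\Sc(G)\bigr)$. These two sets are not the same, and the containment of the second in the first is exactly the nontrivial content you have skipped. An element of $(\ker{d\pi})^{d_pN_\pi}\ast\Sc(G)$ has the shape $D_1\ast\cdots\ast D_N\ast h$ with a \emph{single} Schwartz function on the right, whereas an element of $(\ker{d\pi}\ast\Sc(G))^{d_pN_\pi}$ has Schwartz functions interspersed between the $D_i$'s. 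Your step ``taking the $(d_pN_\pi)$-th convolution power'' starts from the wrong object; from $\ker{d\pi}\ast\Sc(G)\subseteq\ker\pi\cap\Sc(G)$ alone you only get that $D_1\ast\cdots\ast D_N\ast h$ lies in $\ker\pi\cap\Sc(G)$ (one factor), not in $(\ker\pi\cap\Sc(G))^N$. Factoring $h$ via Dixmier--Malliavin does not help either: in $D_1\ast\cdots\ast D_N\ast h_1\ast\cdots\ast h_N$ all the $D_i$'s are still stuck to the left of $h_1$.

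The paper's proof addresses precisely this point. It shows by induction that $D_1\ast\cdots\ast D_N\ast h\in(\ker\pi\cap C_0(G))^N$, using a factorization of the Dirac distribution $\delta=\sum f_j\ast v_j$ with $f_j\in C_0^k(G)$ and $v_j\in U(\gg_\CC)$: writing $D_1\ast D_2=\sum(D_1\ast f_j)\ast(v_j\ast D_2)$ inserts a function between $D_1$ and $D_2$, and the two-sidedness of $\ker{d\pi}$ ensures $v_j\ast D_2\in\ker{d\pi}$, so the induction proceeds. Only after this does one apply Corollary~\ref{cor:lemma0} and \eqref{npidetj}, and then pass from $C_0^\infty(G)$ to $\Sc(G)$ by density (which is legitimate here because multiplication by $p$ is continuous on $\Sc(G)$, not on $L^1(G)$). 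So your closing remark identifies the wrong step as ``the one that really matters'': the inclusion $\ker{d\pi}\ast\Sc(G)\subseteq\ker\pi\cap\Sc(G)$ is immediate, while splitting $D_1\ast\cdots\ast D_N\ast h$ into $N$ factors in $\ker\pi$ is the actual work.
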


\begin{proof}
We prove by induction that
for 
 $D_1, \dots, D_N\in \ker {d\pi}$ and $h\in C_0^\infty(G)$ 
we have
\begin{equation}\label{pker:1}
D_1\ast \cdots \ast D_N \ast h \in (\ker{\pi}\cap C_0(G))^N.
\end{equation}
When $N=1$ the assertion is clear. 

Assume now that we have proved \eqref{pker:1} for $N-1$ and all $N-1$ sets $D_1, \dots ,D_{N-1}\in \ker{d\pi}$. 
For every $k>0$ there are a finite number of $f_j\in C_0^k(G)$ and $v_j\in U(\gg_C)$ such that
$$\delta = \sum f_j\ast v_j.$$
 Then, by choosing $k$ such that $D_1\ast f_j \in C_0(G)$, we get 
$$ 
D_1\ast D_2\ast \cdots\ast D_N\ast h =\sum D_1\ast f_j \ast v_j \ast D_2\ast \cdots\ast D_N\ast h
$$
where the sum is finite. 
Since $v_k \ast D_2\in \ker{d\pi}$, we obtain
$$D_1\ast D_2\ast \cdots\ast D_N\ast h\in (\ker{\pi}\cap C_0(G))^N, $$
and this proves \eqref{pker:1}.

From Corollary~\ref{cor:lemma0}  and (\ref{npidetj}) we get that for  our $N_\pi\in \NN^*$ and
for every $p\in \Pc(G)$  we have
$$p((\ker {d\pi})^{N_\pi d_p}\ast C_0^\infty(G))\subseteq j(\pi)$$
Since $C_0^\infty(G)$ is dense in $\Sc(G)$, we get that 
$$p((\ker {d\pi})^{N_\pi d_p}\ast \Sc(G))\subseteq j(\pi).$$
\end{proof}

\begin{theorem}\label{vpiinj}
 The closed two-sided ideal 
$$
 K_\pi:=\{f\in L^1(G)\mid \dual{pc_{\xi,\eta}}{f}=0,\,  \forall p\in V_\pi,\, \forall  \xi,\eta \in \Hc_\pi^\infty \}
$$
contains the minimal ideal $j(\pi)$.
 \end{theorem}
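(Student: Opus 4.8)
The plan is to exploit that $K_\pi$ is closed: it is the common kernel of the functionals $f\mapsto\dual{pc_{\xi,\eta}}{f}$, and these are continuous on $L^1(G)$ precisely because membership of $p$ in $V_\pi$ forces $pc_{\xi,\eta}\in L^\infty(G)$ (Definition~\ref{vpidef}), so that $\abs{\dual{pc_{\xi,\eta}}{f}}\le\norm{pc_{\xi,\eta}}_{L^\infty(G)}\norm{f}_{L^1(G)}$. (The family $\{pc_{\xi,\eta}\}$ is moreover translation invariant, since $V_\pi$ is left and right invariant and $\lambda(x)c_{\xi,\eta}=c_{\xi,\pi(x)\eta}$, $\rho(x)c_{\xi,\eta}=c_{\pi(x)^{-1}\xi,\eta}$; this is what makes its $L^1$-annihilator $K_\pi$ a two-sided ideal, as asserted in the statement.) Hence, to prove $j(\pi)\subseteq K_\pi$ it suffices, after fixing $p\in V_\pi$, $\xi,\eta\in\Hc_\pi^\infty$ and setting $m:=d_p$, to show that $\dual{pc_{\xi,\eta}}{\cdot}$ vanishes on a dense subspace of $j(\pi)$.

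As that dense subspace I would use $(\ker\pi\cap\Sc(G))^{N_\pi m}$, the span of the $(N_\pi m)$-fold convolution products of elements of $\ker\pi\cap\Sc(G)$. The key computation is then immediate: if $f=f_1\ast\cdots\ast f_{N_\pi m}$ with $f_i\in\ker\pi\cap\Sc(G)$, then Lemma~\ref{lemma0} (with $N_0=N_\pi$ and $d_p=m$) gives $pf\in(\ker\pi\cap\Sc(G))^{N_\pi}\subseteq\ker\pi$, so that, $p$ being scalar valued, $\dual{pc_{\xi,\eta}}{f}=\dual{c_{\xi,\eta}}{pf}=\scalar{\pi(pf)\xi}{\eta}=0$; here I use the elementary identity $\dual{c_{\xi,\eta}}{h}=\scalar{\pi(h)\xi}{\eta}$, a specialization (evaluation at the identity) of the formulas preceding \eqref{fconco}. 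Consequently $\dual{pc_{\xi,\eta}}{\cdot}$ annihilates $(\ker\pi\cap\Sc(G))^{N_\pi m}$, hence $j(\pi)$, and since $p$, $\xi$, $\eta$ were arbitrary, $j(\pi)\subseteq K_\pi$ follows.

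The step carrying the actual work is the density of $(\ker\pi\cap\Sc(G))^{N_\pi m}$ in $j(\pi)$ — in particular the point that using $N_\pi m$ rather than merely $N_\pi$ convolution factors does not shrink the closure below $j(\pi)$. The inclusion $\overline{(\ker\pi\cap\Sc(G))^{N}}\subseteq j(\pi)$ for $N\ge N_\pi$ is clear from $(\ker\pi\cap\Sc(G))^{N}\subseteq(\ker\pi)^{N_\pi}$ and \eqref{npidetj}. For the reverse I would invoke the description of the minimal algebra ideal $J(\{\pi\})$ recalled in Subsection~\ref{idlone}: each of its generators $f$ satisfies $g\ast f=f\ast g=f$ for some $g\in\ker\pi\cap\Sc(G)$, hence lies in $(\ker\pi\cap\Sc(G))^{N}$ for every $N$ (write $f=g^{\ast(N-1)}\ast f$); approximating in $L^1$ the factors of the elements $a\ast f\ast b$ that span $J(\{\pi\})$ by Schwartz functions in $\ker\pi$ — possible since $\ker\pi\cap\Sc(G)$ is dense in $\ker\pi$ and those factors lie in $\ker\pi$ — then gives $J(\{\pi\})\subseteq\overline{(\ker\pi\cap\Sc(G))^{N}}$, so $j(\pi)=\overline{J(\{\pi\})}\subseteq\overline{(\ker\pi\cap\Sc(G))^{N}}$. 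Beyond this density bookkeeping, the only substantive input is Lemma~\ref{lemma0}, which is already proved.
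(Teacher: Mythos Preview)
Your argument is correct and follows essentially the same route as the paper: use Lemma~\ref{lemma0} to show that $pf\in\ker\pi$ for $f$ in a sufficiently high convolution power of $\ker\pi\cap\Sc(G)$, deduce $\dual{pc_{\xi,\eta}}{f}=\dual{c_{\xi,\eta}}{pf}=0$, and pass to $j(\pi)$ by density and the continuity coming from $pc_{\xi,\eta}\in L^\infty(G)$. The only difference is bookkeeping: the paper applies Lemma~\ref{lemma0} with $N_0=1$, so that already $(\ker\pi\cap\Sc(G))^{d_p}$ suffices (and then $(\ker\pi)^{d_p}\supseteq(\ker\pi)^{\max(d_p,N_\pi)}$ is dense in $j(\pi)$), whereas you take $N_0=N_\pi$ and work with $(\ker\pi\cap\Sc(G))^{N_\pi d_p}$ and a separate density argument via the generators of $J(\{\pi\})$; both lead to the same conclusion.
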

\begin{proof}
Let
for $p\in V_\pi, p\ne 0$, $d_p$ be the dimension of   $\Vc_p$.
We have seen in the proof of Lemma~\ref{lemma0} that 
$$
 p((\ker\pi\cap \Sc(G))^{d_p})\subset \ker\pi.
$$
This shows that  $pc_{\xi,\eta}$ is contained in $((\ker \pi\cap\Sc(G))^{d_p})^\perp$ and therefore also 
$$
 \dual{pc_{\xi,\eta}}{(\ker\pi)^{d_p}}=\{0\}
$$
for every $\eta,\xi\in \Hc_\pi^\infty$. 
Hence
$$
 \dual{pc_{\xi,\eta}}{j(\pi)}=\{0\}
$$
since $j(\pi)=(\ker\pi)^{d}$ for every $d\geq N_\pi$.
\end{proof}

\subsection{A dense subspace of $j(\pi)$}\label{subsect2}

Consider 
$$ \gg= \gg_n \supset \gg_{n-1} \supset \cdots \supset \gg_1\supset \gg_0=\{0\}$$
a Jordan-H\"older sequence, $X_k \in \gg_k \setminus\gg_{k-1}$ a corresponding 
Jordan-H\"older basis, and $X^\ast_1, \dots, X^\ast_n$ its dual basis of $\gg^\ast$. 
Let $e$ be the set of  jump indices for $l$ and this basis, that is, 
$$ e=\{j \mid 1\le j\le n, \,  X_j \not\in \gg_{j-1} +\gg(l)\}. $$  
Denote  $\gg_e =\opn{span}\{X_j \mid j\in e\}$, and let $d= \opn{dim}{\gg_e}=\dim{\Oc}$. 
We then have $\gg_e +\gg(l)= \gg$. 
If $e'= \{ 1, \dots , n\} \setminus e$, then $\gg_{e'}= \opn{span}\{ X_j\mid j\in e'\}$ 
is also a complement of $\gg_e$, isomorphic with $\gg(l)$.
Let 
$e=\{j_1 <\dots < j_d\}$ be the set of jump indices, and denote 
$e'= \{ l_1< \dots < l_m\}$, $m+d=n$.  

A set of generators for $\ker{d\pi}$ were given in 
\cite{Pe84}:
The coadjoint orbit $\Oc_\ell$ is given by 
$$\Oc_\ell =\{ \sum\limits_1^n R_j(t) X_j^\ast\mid t\in \RR^d\},  $$
where 
the polynomials $R_j$ have the following properties:
\begin{itemize}
\item[i)] 
$R_j$ depends only on the variables $t_1, \dots , t_k$ where $j_k \le j < j_{k+1}$.
\item[ii)] $R_{j_k}(t) = t_k$, for all $k=1, \dots,d$.
 \end{itemize}
Then $\ker{d\pi}$ is generated by 
$u_j \in U(\gg_{\CC})$, where
\begin{equation}\label{uj}
 u_j = X_j - i \omega (R_j (- i X_{j_1}, \dots, -i X_{j_d})
\end{equation}
where $\omega\colon S(\gg_\CC) \to   U(\gg_{\CC})$ is the symmetrization map. 
Note that $u_j=0$ when $j\in e$.

\begin{definition}\label{Kpi0def}
We set 
$$
 K_{\pi, 0} :=\{f\in \Sc(G) \mid pf \in \ker{\pi}, \,  \forall\,  p\in \Pc(G) \}.
$$
\end{definition}

\begin{remark}
It is easy to see that $K_{\pi, 0}$ is a closed twosided ideal  of $\S(G) $ contained in  $\Sc(G)\cap \ker{\pi}$.
\end{remark}
\begin{lemma}\label{kpiconJ}
The subspace $K_{\pi,0} $ contains the generators of the  minimal ideal $J(\pi) $. 
 \end{lemma}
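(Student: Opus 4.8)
The plan is to verify that \emph{every} generator of the minimal ideal $J(\pi)$ already lies in $K_{\pi,0}$. Recall from Section~\ref{idlone} that such a generator is a self-adjoint function $f\in\Sc(G)$ for which there exists a Schwartz function $g\in\ker\pi$ with $g\ast f=f\ast g=f$; fix such a pair $(f,g)$ once and for all. (The self-adjointness of $f$ will play no role in the argument.)

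First I would record two elementary observations. (a)~The function $f$ itself lies in $\ker\pi\cap\Sc(G)$: it is Schwartz by hypothesis, and $\pi(f)=\pi(g\ast f)=\pi(g)\pi(f)=0$ because $\pi(g)=0$. (b)~Iterating the identity $g\ast f=f$ yields, for every integer $N\ge 1$, that $f=g^{\ast(N-1)}\ast f$, i.e. $f$ is the convolution of $N-1$ copies of $g$ with $f$; since $g,f\in\ker\pi\cap\Sc(G)$, this exhibits $f$ as an $N$-fold convolution product of elements of $\ker\pi\cap\Sc(G)$, so $f\in(\ker\pi\cap\Sc(G))^{N}$ for every $N\ge 1$.

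Now fix an arbitrary $p\in\Pc(G)$ and set $N=d_p=\dim\Vc_p$. By (b), $f=g^{\ast(d_p-1)}\ast f\in(\ker\pi\cap\Sc(G))^{d_p}$, and the inclusion $p\big((\ker\pi\cap\Sc(G))^{d_p}\big)\subseteq\ker\pi$ — which is \eqref{lemma0:ind} with $N_0=1$, established in the proof of Lemma~\ref{lemma0} by means of the multiplication formula \eqref{cpmulfg} — gives $pf\in\ker\pi$. As $p\in\Pc(G)$ was arbitrary, $f$ satisfies the defining condition of $K_{\pi,0}$, hence $f\in K_{\pi,0}$; letting $f$ range over all generators of $J(\pi)$ proves the lemma.

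I do not anticipate a genuine obstacle: the argument is short once Lemma~\ref{lemma0} is in hand. The only point that deserves a word of care is the bookkeeping between the $L^1$-algebraic description of the generators of $J(\pi)$ recalled in Section~\ref{idlone} and their Schwartz representatives — namely, that the approximate-unit relation $g\ast f=f\ast g=f$ can be realized with $g$ itself in $\Sc(G)$, which is exactly how the generating family of $J(\pi)$ was specified there.
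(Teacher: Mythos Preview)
Your proof is correct and follows essentially the same route as the paper: both use the absorbing relation $g\ast f=f$ to write $f$ as an arbitrarily long convolution of elements of $\ker\pi\cap\Sc(G)$, and then invoke Lemma~\ref{lemma0} with $N_0=1$ to conclude $pf\in\ker\pi$ for every $p\in\Pc(G)$. Your observations (a) and (b) simply make explicit what the paper compresses into the single line $pf=p(g^{d_p}\ast f)\subset\ker\pi$.
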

\begin{proof} We have seen in 
Definition~\ref{hulldef} that the minimal ideal $J(\pi) $ is generated by Schwartz functions $f $ for which exist $g\in\S(G) $ such that $\pi(g)=0 $ and $g\ast f=f\ast g=f $.
Hence, by Lemma~\ref{lemma0}  for $p\in\PP(G) $ we have that 
$$
 pf=p(g^{ d_p }\ast f)\subset \ker\pi,  
$$
therefore $f\in K_{\pi, 0}$. 
\end{proof}

\begin{lemma}\label{Kpi0}
The subspace $K_{\pi, 0}$ is contained in $j(\pi)$.
\end{lemma}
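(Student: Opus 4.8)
The plan is to combine Corollary~\ref{pker} with a ``division'' property of $K_{\pi,0}$ relative to the generators $u_j$, $j\in e'$, of $\ker{d\pi}$. Applying Corollary~\ref{pker} to the constant polynomial $p\equiv 1$, for which $d_p=\dim\Vc_1=1$, one gets $(\ker{d\pi})^{N_\pi}\ast\Sc(G)\subseteq j(\pi)$; hence it suffices to show that $K_{\pi,0}\subseteq (\ker{d\pi})^{N_\pi}\ast\Sc(G)$. I would deduce this from the following claim, a \emph{division lemma}: every $f\in K_{\pi,0}$ can be written as a finite sum $f=\sum_{j\in e'}u_j\ast h_j$ with $h_j\in K_{\pi,0}$. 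Granting it, iterate $N_\pi$ times: $f\in K_{\pi,0}$ becomes a finite sum of terms $u_{j_1}\ast\cdots\ast u_{j_{N_\pi}}\ast h$ with $h\in K_{\pi,0}\subseteq\Sc(G)$; regarded as distributions supported at $e$, the factor $u_{j_1}\ast\cdots\ast u_{j_{N_\pi}}$ is the product in $U(\gg_\CC)$ of $N_\pi$ generators of $\ker{d\pi}$, hence lies in $(\ker{d\pi})^{N_\pi}$, and therefore $f\in(\ker{d\pi})^{N_\pi}\ast\Sc(G)\subseteq j(\pi)$, which is the assertion.

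For the division lemma I would first record that $K_{\pi,0}$ is stable under multiplication by arbitrary polynomials (immediate, since $\Pc(G)$ is an algebra) and, since $\ker\pi\cap\Sc(G)$ is a $U(\gg_\CC)$-bimodule, also under left and right invariant differential operators. Then realize $\pi$ on $L^2(\RR^{d/2})$ through a polarization at $\ell$, with $d\pi(X_{j_1}),\dots,d\pi(X_{j_d})$ the canonical operators and the relations $d\pi(u_j)=0$, $j\in e'$, as in \cite{Pe84}. Membership $f\in K_{\pi,0}$ says $\pi(pf)=0$ for every $p\in\Pc(G)$; using that, for $g\in\Sc(G)$, the vanishing of $\pi(g)$ is equivalent to the vanishing of a suitable partial Fourier transform $\widehat g$ of $g$ in the directions $X_j$, $j\in e'$, transverse to $\Oc_\ell$, on the frequency locus cut out by the orbit, this translates into the flatness statement that $\widehat f$ and all of its derivatives vanish along that locus. (This is the $e'$-variable analogue of the elementary Heisenberg computation, where $\widehat f_z(\cdot,\xi)$ vanishes to infinite order at the central frequency $c$ of $\Oc_\ell$.) One then constructs the $h_j$ by integrating $f$ successively along the directions $X_j$, $j\in e'$, i.e. by solving the first order equations encoded in $u_j=X_j-i\omega(R_j(\dots))$ — in the Heisenberg case $h(x,y,z)=e^{ciz}\int_{-\infty}^z e^{-cis}f(x,y,s)\,ds$; flatness guarantees both that each $h_j$ is Schwartz and that its transverse Fourier transform is again flat on the orbit locus, i.e. $h_j\in K_{\pi,0}$, which is what allows the iteration.

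The main obstacle is precisely this division lemma for groups of step $\ge 3$, when $\Oc_\ell$ is not flat and $j(\pi)\subsetneq\ker\pi$: making the passage ``flat $\Rightarrow$ divisible, with the quotients again in $K_{\pi,0}$'' precise, uniform, and compatible with the Schwartz seminorms when $\gg(l)$ has dimension $>1$. I expect this forces the argument to be run by induction along the Jordan--Hölder flag $\gg=\gg_n\supset\cdots\supset\gg_1\supset\gg_0=\{0\}$, stripping off one $X_j$, $j\in e'$, at a time while controlling the Pedersen polynomials $R_j$ entering the $u_j$. When $\Oc_\ell$ is flat (for instance for the Heisenberg group, where $N_\pi=1$ and $j(\pi)=\ker\pi$) the statement is immediate and no division is needed.
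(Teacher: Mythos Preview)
Your reduction to the ``division lemma'' is the whole difficulty, and you have not proved it; you yourself flag it as the main obstacle and only offer a heuristic. Two concrete reasons the heuristic does not close the gap. First, the Fourier picture you invoke is an \emph{abelian} Fourier transform on~$\gg$, but the decomposition $f=\sum_{j\in e'}u_j\ast h_j$ uses \emph{group} convolution; for step $\ge 3$ these do not match, so ``$\hat f$ flat on $\Oc_\pi$ $\Rightarrow$ $\hat h_j$ flat'' is not a statement about the objects you actually need to control. Second, for $j\in e'$ the generator $u_j=X_j-i\,\omega(R_j(-iX_{j_1},\dots,-iX_{j_d}))$ is in general a differential operator of order $\deg R_j>1$, so ``integrating $f$ along $X_j$'' does not solve $u_j\ast h=g$; your Heisenberg model, where $R_1$ is constant and $u_1$ is first order, is exactly the degenerate case that hides this.

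The paper does \emph{not} prove your division lemma and does not need it. The mechanism is different: one never keeps the quotients in $K_{\pi,0}$. Instead, using only the commutator identity $x_{l_k}(u_{l_k}\ast g)=-g+u_{l_k}\ast(x_{l_k}g)$ (valid because the $R_j$ depend only on the jump variables $X_{j_i}$, not on $X_{l_k}$), one shows by induction that for every tuple $(N_1,\dots,N_m)$ there are integers $\widetilde N_k$ and a decomposition
\[
f\in \Pc_1^{\widetilde N_1}\cdots\Pc_{m-1}^{\widetilde N_{m-1}}\bigl((\ker d\pi)^{N_m}\ast\Sc(G)\bigr)+\cdots+\Pc_1^{\widetilde N_1}\bigl((\ker d\pi)^{N_2}\ast\Sc(G)\bigr)+(\ker d\pi)^{N_1}\ast\Sc(G),
\]
where $\Pc_k^D$ denotes polynomials of degree $\le D$ in the single coordinate $x_{l_k}$. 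The polynomial prefactors are the price for not having your division lemma. One then chooses $N_1,\dots,N_m$ recursively so that each summand lands in $j(\pi)$, and this is precisely where Corollary~\ref{pker} with a \emph{nontrivial} polynomial $p$ is used: for any fixed $p$ one has $p\bigl((\ker d\pi)^{d_p N_\pi}\ast\Sc(G)\bigr)\subseteq j(\pi)$, so once $N_1$ is fixed one takes $N_2\ge d_{p_1}N_\pi$ with $p_1$ ranging over $\Pc_1^{\widetilde N_1}$, and so on. Your scheme only invokes the $p\equiv 1$ case of Corollary~\ref{pker}, which throws away exactly the leverage the paper relies on.

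In short: either you must supply a genuine proof of the division lemma (a Malgrange-type statement for the noncommutative convolution by the $u_j$, which is at least as hard as the lemma itself), or you should switch to the paper's strategy of letting polynomial weights accumulate and absorbing them via Corollary~\ref{pker}.
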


\begin{proof} 
Let $X_1, \dots, X_n$ be the Jordan-H\"older basis as above, and recall that we have identified $G$ and $\gg$ 
via the exponential mapping. 
 
On $G\simeq \gg$ we consider the chart
\begin{equation}\label{chart}
\theta\colon  \exp{\sum\limits_{j=1}^n x_j X_j} \mapsto (x_1, \dots, x_n)\in \RR^n.
\end{equation}
Thus we may assume that $G=\RR^n$ with multiplication given by
$$ x\cdot y = \theta (\theta^{-1}(x)\cdot \theta^{-1}(y)).  $$
We consider polynomials on $G$ to be written in the chart $\theta$, thus we will indentify a $p\in \Pc(G)$ with 
$ p\circ \theta^{-1}$ which is a polynomial function on $\RR^n$.  

Recall that for $j=1, \dots, n$, and $\varphi\in C^\infty(G)$ we have
\begin{equation}\label{Xj}
\begin{aligned}
(X_j \ast \varphi)(y) &  = (d \lambda(X_j)\varphi)(y) = \frac{d}{ds} \varphi((-sx)\cdot y)\vert_{s=0}\\
& = -\partial_{k}\varphi(y)  +\sum\limits_{j=1}^{k-1} \alpha_{jk}(y_{j+1}, \dots, y_n)\partial_j \varphi(y),  
\end{aligned}
\end{equation}
where $\alpha_{jk}$ are polynomial functions.

Let $f \in K_{\pi, 0}$. 
Then $f\in \ker{\pi}\cap \Sc(G)= \ker{d\pi}\ast \Sc(G)$ (\cite[Thm.3.5]{dC87}), hence there are Schwartz functions $g_1, \dots, g_m$ such that 
$$ f = u_{l_m} \ast g_m +\cdots + u_{l_1}\ast g_1. $$
Let $p$ a polynomial that depends on $x_{l_m}$ only. 
Then by \eqref{uj} and \eqref{Xj} we have that 
$$ pf = - p'g_{m} + u_{l_m}\ast (pg_m) + \sum \limits_{j< m} u_{l_j} \ast (pg_j).$$
If we chose $p(x)=p_1(x)= x_{l_m}$, since $pf\in \ker{\pi}$, 
we obtain that 
$$ g_m = \sum\limits_{j\le m} u_{l_j}\ast h_j  \in \ker{\pi}.$$ 
Thus it follows that
$$ f = u_{l_m}^2 \ast h_{m} + \sum\limits_{j< m} u_{l_m} \ast u_{l_j} \ast h_j + 
\sum\limits_{j< m} u_{l_m} \ast g_j. $$
Since 
$u_{l_m} \ast u_{l_j} \in \sum\limits_{k<m} u_{l_k}\ast U(\gg_\CC)$ 
when $j<m$ (see \cite[Prop~2.3.1]{Pe84}), 
we can write
\begin{equation}\label{Kpi0:1}
f= u_{l_m}\ast u_{l_m}\ast g_{m, 2} + \sum\limits_{j<m} u_{l_j} \ast g_{j, 2}, 
\end{equation}
for some
$g_{k, 2}\in \Sc(G)$.

We prove by induction  that for each $N$ there are $g_{k, N}\in \Sc(G)$ such that 
\begin{equation}\label{ulmN}
 f = u_{l_m}^N \ast g_{m, N} +\sum\limits_{j<m} u_{l_j} \ast g_{j, N}.
\end{equation}
Since  $f$ has been chosen arbitrary and $pf$ satisfies the same properties as $f$ for all $p\in \Pc(G)$ 
we will also have  also that
$$
pf \in u_{l_m}^N \ast \Sc(G) + \sum\limits_{j<m} u_{l_j}\ast \Sc(G).$$

We show first by induction over $k\in \NN$ 
\begin{equation}\label{Kpi02}
x_{l_m}^k(u_{l_m}^k\ast g) = (-1)^k k! g + u_{l_m}\ast h_k
\end{equation} 
for all $g\in \Sc(G)$, where $h_k\in \Sc(G)$.
For $k=1$ this has been shown above.
If $k>1$ denote $g_{k-1} = u_{l_m}^{k-1}\ast g$.
Then
$$
u_{l_m}\ast (x_{l_m}^k g_{k-1})= k x_{l_m}^{k-1}  g_{k-1} + x_{l_m}^k (u_{l_m}^k\ast g).$$
Hence by the induction hypothesis, 
$$
\begin{aligned}
 x_{l_m}^k (u_{l_m}^k\ast g) & = u_{l_m}\ast (x_{l_m}^k g_{k-1}) - k[ (-1)^k (k-1)! g + u_{l_m} \ast h_{k-1}]\\
& = u_{l_m} \ast h_k + (-1)^k k! g.
\end{aligned}
$$
This proves \eqref{Kpi02}.

Now assume that we have shown that
$$ f = u^k_{l_m} \ast g_{m, k} +\sum \limits_{j<m} u_{l_j} \ast g_{j, k}$$
for some $g_{j, k}\in \Sc(G)$. 
Then if $p_k(x) = x_{l_m}^k$, we have by using \eqref{Kpi02}, 
$$ p_kf = (-1)^k k! g_{m, k} + \text{term in}\, \ker{\pi}, $$
hence
$g_{m, k} \in \ker{\pi}$. 
Thus \eqref{ulmN} follows. 

Denote by $\Pc_{k}$ polynomials in variable $x_{l_k}$, $1\le k\le m-1$, and by  $\Pc_{k}^D$  the subset of $\Pc_k$ consisting of polynomials of degree $D$.
Iterating the above procedure we get that for every $N_m$, $N_{m-1}, \dots, N_1$ we get that
\begin{equation}\label{mainform}
\begin{aligned}
 f \in & \Pc_{1}^{\widetilde{N_{1}}}\cdots \Pc_{m-1} ^{\widetilde{N_{m-1}}} ((\ker{d\pi})^{N_m}\ast \Sc(G)) \\
& + \Pc_{1}^{\widetilde{N_1}}\cdots \Pc_{m-2}^{\widetilde{N_{m-2}}} ((\ker{d\pi})^{N_{m-1}} \ast \Sc(G))+ \cdots \\ 
 & \cdots + \Pc_{1}^{\widetilde{N_1}}((\ker{d\pi})^{N_2}\ast \Sc(G)) + 
(\ker{d\pi}) ^{N_1}\ast \Sc(G) 
\end{aligned}
\end{equation}
for all $f\in K_{\pi, 0}$,  where $\widetilde{N_k} = 1+ 2+\cdots+(N_k-1)$. 
Indeed, assume that we have proved that for all $f\in K_{\pi, 0}$ and $1< k<  m$, 
$$ 
f = \sum\limits_{j\le k} u_{l_j}\ast g_{j} +  \Pc_{k+1}^{\widetilde{N_{k+1}}}\cdots \Pc_{m-1}^{\widetilde{N_{m-1}}} ((\ker{d\pi})^{N_m}\ast \Sc(G)) 
+ \cdots 
 + (\ker{d\pi})^{N_{k+1}} \ast \Sc(G) 
$$
and thus $pf$ can be written similarly for every $p\in \Pc(G)$. 
We have on the other hand 
$$
\begin{aligned} 
x_{l_k} f &  = - g_k + u_{l_k}\ast (x_{l_k} g_k) + \sum\limits_{j<k} u_{l_j} \ast (x_{l_k}g_j) \\
& +  x_{l_k} P_{k+1}^{\widetilde{N_{k+1}}} \cdots \Pc_{m-1}^{\widetilde{N_{m-1}}} ((\ker{d\pi})^{N_m}\ast \Sc(G))+\cdots+ x_{l_k} 
 (\ker{d\pi})^{N_{k+1}} \ast \Sc(G))
\end{aligned}
$$
It follows that $g_k$ is of the form
$$
\begin{aligned}
g_k & = u_{l_k}\ast h_k + \sum\limits_{j<k} u_{l_j}\ast h_j \\
& +  x_{l_k} P_{k+1}^{\widetilde{N_{k+1}}} \cdots \Pc_{m-1}^{\widetilde{N_{m-1}}} (\ker{d\pi})^{N_m}\ast \Sc(G))+\cdots+ x_{l_k} 
 ((\ker{d\pi})^{N_{k+1}} \ast \Sc(G)), 
\end{aligned} 
$$
with $h_j\in \Sc(G)$. 
Replacing this in the formula for $f$ we get that
$$
\begin{aligned}
f &=   u_{l_k}^2\ast  h + \sum\limits_{j<k} u_{l_k} \ast u_{l_j} \ast h_j \\
 + &  \Pc_{k+1}^{\widetilde{N_{k+1}}} \cdots \Pc_{m-1}^{\widetilde{N_{m-1}}} (\ker{d\pi})^{N_m}\ast \Sc(G))+\cdots +(\ker d\pi)^{N_{k+1}} \ast \Sc(G)\\
+ &  u_{l_k}\ast (x_{l_k}  \Pc_{k+1}^{\widetilde{N_{k-1}}} \cdots \Pc_{m-1}^{\widetilde{N_{m-1}}} (\ker{d\pi})^{N_m}\ast \Sc(G))+\cdots + x_{l_k} 
 ((\ker{d\pi})^{N_{k-1}} \ast \Sc(G)).
 \end{aligned}
 $$
 Note that $u_{l_k} \ast (pg) = p(u_{l_k} \ast g)$ when $p\in \Pc_{j}$, $j>k$.
 Hence
 $$ 
\begin{aligned}
 f & = u_{l_k}^2 \ast g_{k, 2} + \sum\limits_{j<k} u_{l_j}\ast g_{j, 2}\\
& + \Pc_k^1[P_{k+1}^{\widetilde{N_{k+1}}} \cdots \Pc_{m-1}^{\widetilde{N_{m-1}}} (\ker{d\pi})^{N_m}\ast \Sc(G))+\cdots+ 
 (\ker{d\pi})^{N_{k+1}} \ast \Sc(G))] 
\end{aligned} 
$$
Assume that we have proved that for all $f\in K_{\pi, 0}$ we have  
\begin{equation}\label{1001}
\begin{aligned}
 f& =  u_{l_k}^q \ast g_{k, q} + \sum\limits_{j<k} u_{l_j}\ast g_{j, q}\\
&+   \Pc_k^{\widetilde{q}}[ P_{k+1}^{\widetilde{N_{k+1}}} \cdots \Pc_{m-1}^{\widetilde{N_{m-1}}} (\ker{d\pi})^{N_m}\ast \Sc(G))+\cdots+ 
 (\ker{d\pi})^{N_{k+1}} \ast \Sc(G))] 
 \end{aligned}
\end{equation}
 Then $x_{l_k}^q f$ can be written similarly.
Also \eqref{1001} shows that  for some $\tilde{g}_{k, q}\in \S(G)$,  
 $$
 \begin{aligned}
 x_{l_k}^q f & = (-1)^{q}q! g_{k, q} +  u_{l_k}^2 \ast {\tilde{g}}_{k, q} + 
 \sum\limits_{j<k} u_{l_j}\ast (x_{l_k}g_{j, q})\\
 & +
 x_{l_k}^q \Pc_k^{\widetilde{q}}( P_{k+1}^{\widetilde{N_{k+1}}} \cdots \Pc_{m-1}^{\widetilde{N_{m-1}}} (\ker{d\pi})^{N_m}\ast \Sc(G))+\cdots
\cdots + 
 (\ker{d\pi})^{N_{k+1}} \ast \Sc(G)).
\end{aligned}
$$
Hence $g_{k, q}$ is of the form
$$
\begin{aligned}
 g_{k, q}& = u_{l_k}^2\ast h_k+ \sum\limits_{J<k} u_{l_j}\ast h_j \\
& + x_{l_k}^q \Pc_k^{\widetilde{q}}[P_{k+1}^{\widetilde{N_{k+1}}} \cdots \Pc_{m-1}^{\widetilde{N_{m-1}}} (\ker{d\pi})^{N_m}\ast \Sc(G))+\cdots+ 
 (\ker{d\pi})^{N_{k+1}} \ast \Sc(G))].
\end{aligned}
$$
Replacing this in \eqref{1001}, and using the fact that
$$ 
\begin{aligned}
& u_{l_k}^q \ast (x_{l_k}^q \Pc_k^{\widetilde{q}}[\Pc_{k+1}^{\widetilde{N_{k+1}}} \cdots \Pc_{m-1}^{\widetilde{N_{m-1}}} (\ker{d\pi})^{N_m}\ast \Sc(G))+\cdots+ 
 (\ker{d\pi})^{N_{k+1}} \ast \Sc(G))] \\
& \subseteq \Pc_k^{\widetilde{q}+q }[\Pc_{k+1}^{\widetilde{N_{k+1}}} \cdots \Pc_{m-1}^{\widetilde{N_{m-1}}} (\ker{d\pi})^{N_m}\ast \Sc(G))+\cdots+ 
 (\ker{d\pi})^{N_{k+1}} \ast \Sc(G))
\end{aligned}
$$
 we get that \eqref{1001} holds for $q$ replaced by $q+1$. 

We have thus proved \eqref{mainform}.

Now we choose 
\begin{itemize}
\item{} $N_1$ large enough such that $(\ker{d\pi})^{N_1} \ast \Sc(G) \subseteq j(\pi)$;
\item{} $N_2$ large enough such that $\Pc_{1}^{\widetilde{N_1}}((\ker{d\pi})^{N_2} \ast \Sc(G))\subseteq j(\pi)$;
\item{} $\cdots$
\item{} $N_m$ large enough such that 
$\Pc_{1}^{\widetilde{N_{1}}}\cdots \Pc_{m-1}^{\widetilde{N_{m-1}}} ((\ker{d\pi})^{N_m}\ast \Sc(G))\subseteq j(\pi)$.
\end{itemize}
Hence we eventually get that $f \in j(\pi)$.
\end{proof}

Inspecting the proof of Lemma~\ref{Kpi0} one sees that it gives us 
 in fact an  $N_0\in\NN$  such that the closed subspace of $\Sc(G)$ defined by
$$ \{ f\in \Sc(G)\mid pf \in \ker{\pi}; \; \forall\, p\in \Pc(G), \; \opn{dim}\Vc_p \le N_0, p \; \text{real}\}
$$
is contained in $j(\pi)$. 
This, along with Lemma~\ref{kpiconJ}, proves  the following fundamental theorem.  

\begin{theorem}\label{Jpi0cor}
There is finite dimensional subspace $\PP_0^\pi=\Pc_0$ of $\Pc(G)$,  invariant under the action of $G$ by left and right translations, such that the closed ideal $ J_{\pi, 0}$ in $\Sc(G)$, defined by
$$ J_{\pi, 0}=\{ f\in \Sc(G)\mid pf \in \ker{\pi}; \; \forall\, p\in \Pc_0\},
$$
is contained in $j(\pi) \cap \Sc(G)$ and it is dense in $j(\pi)$.
\end{theorem}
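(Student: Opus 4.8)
The plan is to extract from the inductive argument in Lemma~\ref{Kpi0} a single integer $N_0$ that bounds the $G$-degrees of all the polynomials one needs to multiply by, and then to let $\Pc_0$ be the (finite dimensional, bi-invariant) span of all $G$-translates of the monomials appearing there. First I would revisit the proof of Lemma~\ref{Kpi0}: there, starting from $f\in K_{\pi,0}$, one only ever multiplies $f$ by the monomials $p_k(x)=x_{l_k}^{q}$ with $k\in\{1,\dots,m-1\}$ and $q$ ranging up to the values $N_1,\dots,N_{m-1}$ fixed in the last display of that proof. Each such $x_{l_k}^q$ has a fixed, finite $G$-degree $d_{k,q}=\dim\Vc_{x_{l_k}^q}$, so setting $N_0:=\max_{k,q} d_{k,q}$ and
$$
\Pc_0:=\opn{span}\{\lambda(x)\rho(y)\,x_{l_k}^q\mid x,y\in G,\ 1\le k\le m-1,\ 1\le q\le N_k\}
$$
gives a finite dimensional bi-invariant subspace of $\Pc(G)$, with the property that every $p\in\Pc_0$ has $\dim\Vc_p\le N_0$. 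Since $K_{\pi,0}$ is insensitive to which particular polynomials one tests against (it asks $pf\in\ker\pi$ for \emph{all} $p$), the argument of Lemma~\ref{Kpi0} in fact only uses $pf\in\ker\pi$ for $p\in\Pc_0$, hence proves the inclusion $J_{\pi,0}\subseteq j(\pi)$ for this $\Pc_0$, where $J_{\pi,0}=\{f\in\Sc(G)\mid pf\in\ker\pi\ \forall p\in\Pc_0\}$.

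Next I would verify the three remaining assertions. That $J_{\pi,0}$ is a closed ideal of $\Sc(G)$ contained in $\ker\pi\cap\Sc(G)$ is routine: closedness follows because multiplication by a fixed polynomial and the pairing against elements of $\ker\pi^\perp$ are continuous on $\Sc(G)$, the ideal property follows from \eqref{cpmulfg} together with bi-invariance of $\Pc_0$ (so that $p(g\ast f)$ and $p(f\ast g)$ again lie in $\ker\pi$ when $pf,\, p_i f\in\ker\pi$ for all $p,p_i\in\Pc_0$), and the inclusion in $\ker\pi$ is immediate by taking $p=1\in\Pc_0$. For density of $J_{\pi,0}$ in $j(\pi)$: by Lemma~\ref{kpiconJ} the generators of $J(\pi)$ lie in $K_{\pi,0}$, and one checks that each such generator $f$ (with $g\ast f=f\ast g=f$, $\pi(g)=0$) in fact lies in $J_{\pi,0}$ — indeed by Lemma~\ref{lemma0}, $pf=p(g^{d_p}\ast f)\in\ker\pi$ for every $p$, in particular for $p\in\Pc_0$. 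Hence $J(\pi)\subseteq J_{\pi,0}\subseteq j(\pi)$, and since $J(\pi)$ is $\no{}_1$-dense in $j(\pi)$ by definition of $j(\pi)=\ol{J(\pi)}^{\no{}_1}$, so is $J_{\pi,0}$.

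The main obstacle is the first step: one must check carefully that the induction in Lemma~\ref{Kpi0} is genuinely finitary, i.e. that the auxiliary integers $N_1,\dots,N_m$ chosen at the very end (``$N_1$ large enough that $(\ker d\pi)^{N_1}\ast\Sc(G)\subseteq j(\pi)$'', etc.) depend only on $\pi$ (through $N_\pi$, the structure constants $R_j$, and the Jordan–H\"older data) and not on $f$, so that the set of monomials $x_{l_k}^q$ actually used is finite and $N_0$ is well defined. This is where I expect the bookkeeping to be delicate: each stage of the iteration enlarges the polynomial degrees by the combinatorial shift $\widetilde{q}=1+2+\cdots+(q-1)$, and one must confirm that feeding these enlarged degrees back in still terminates after finitely many steps, with the final $N_m$ chosen to absorb all accumulated factors $\Pc_1^{\widetilde{N_1}}\cdots\Pc_{m-1}^{\widetilde{N_{m-1}}}$ into $j(\pi)$ via Corollary~\ref{pker}. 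Once that finiteness is pinned down, taking $\Pc_0$ to be the bi-invariant span of the finitely many monomials involved (together, if convenient, with all lower-degree monomials) completes the proof; real-valuedness of the generating polynomials is harmless since $\Pc_0$ may be taken to be spanned by real polynomials and then complexified.
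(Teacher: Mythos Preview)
Your approach is the same as the paper's: extract a finite $N_0$ from the proof of Lemma~\ref{Kpi0}, set $\Pc_0$ accordingly, and invoke Lemma~\ref{kpiconJ} for density. The verification that $J_{\pi,0}$ is a closed ideal and that the generators of $J(\pi)$ lie in it is correct.

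There is, however, a genuine gap in your identification of $\Pc_0$. You claim that the proof of Lemma~\ref{Kpi0} ``only ever multiplies $f$ by the monomials $x_{l_k}^q$'' and accordingly take $\Pc_0$ to be the bi-invariant span of these individual monomials. But the induction in Lemma~\ref{Kpi0} is \emph{nested}: the induction hypothesis at stage $k$ is a statement about \emph{all} elements of $K_{\pi,0}$, and in the inductive step one applies it to $x_{l_k}^q f$. To know that $x_{l_k}^q f$ itself satisfies the stage-$(k+1)$ form, one must have run stages $m,m-1,\dots,k+1$ on $x_{l_k}^q f$, which in turn requires $x_{l_{k'}}^{q'}\,x_{l_k}^q f\in\ker\pi$ for the relevant $(k',q')$, and so on recursively. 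Thus the conditions on $f$ that are actually used are of the form $pf\in\ker\pi$ with $p$ a \emph{product} $x_{l_{k_1}}^{q_1}\cdots x_{l_{k_r}}^{q_r}$, and your bi-invariant span of single monomials need not contain such products (already on $\RR^2$, the translates of $x_1^a$ and $x_2^b$ do not span $x_1x_2$). Also note that $k$ ranges over $\{1,\dots,m\}$, not $\{1,\dots,m-1\}$: stage $m$ itself uses $x_{l_m}^j f\in\ker\pi$.

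The fix is straightforward and is what the paper does: since the nested induction has bounded depth (controlled by $m$ and the fixed integers $N_1,\dots,N_m$, which depend only on $\pi$ via $N_\pi$ and Corollary~\ref{pker}), the collection of all such products is finite, and one may take $N_0$ to be the maximum of their $G$-degrees and set $\Pc_0=\{p\in\Pc(G)\mid \dim\Vc_p\le N_0\}$. This space is finite dimensional, bi-invariant, and large enough to make the argument go through; your later remark about ``the finitely many monomials involved (together, if convenient, with all lower-degree monomials)'' is headed in the right direction but does not quite land on the correct enlargement.
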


\section{Ideals with hull $\{\pi\}$} 

\subsection{Maximal projections}\label{minproj}

The following lemma goes along the same lines as \cite[Lemma~7.6]{LMP13}.
\begin{lemma}\label{min-proj}
Let $J$ be a closed ideal in $\Sc(G)$ such that $J\subseteq \ker{\pi} \cap \Sc(G)$ and assume that there is 
$d\in \NN$ such that
\begin{equation}\label{nilJ}
(\ker{\pi}\cap \Sc(G)/J)^d=\{ 0\}.
\end{equation}
Let $q$ be a projection in $\Sc(G)/(\ker{\pi}\cap \Sc(G))$.
Then there exists $\mathbf{q}\in \Sc(G)/J$ such that $\mathbf{q}\ast \mathbf{q} =\mathbf{q}$, $\mathbf{q}^* =\mathbf{q}$, 
and,  if 
$$ \mu\colon \Sc(G)/J\longrightarrow \Sc(G)/(\ker{\pi}\cap \Sc(G))= \big(\Sc(G)/J)\big)/ 
\big((\ker{\pi} \cap \Sc(G))/J \big)
$$
is the canonical projection, then $\mu(\mathbf{q})= q$.
\end{lemma}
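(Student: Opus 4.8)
The plan is to lift the idempotent $q$ through the surjection $\mu$ step by step, using the nilpotency hypothesis \eqref{nilJ} as in the classical idempotent-lifting argument, and then to self-adjointize the result. First I would pick any preimage $\mathbf q_1 \in \Sc(G)/J$ of $q$ under $\mu$; since $\mu$ is a $*$-homomorphism of Banach/Fr\'echet algebras with involution we may moreover arrange $\mathbf q_1^* = \mathbf q_1$ by replacing $\mathbf q_1$ with $\tfrac12(\mathbf q_1 + \mathbf q_1^*)$, which still maps to $q = q^*$. Set $\mathfrak n := (\ker\pi \cap \Sc(G))/J$, the kernel of $\mu$ viewed inside $\Sc(G)/J$; by \eqref{nilJ} this is a nilpotent ideal, $\mathfrak n^d = \{0\}$. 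The element $\mathbf q_1 \ast \mathbf q_1 - \mathbf q_1$ lies in $\mathfrak n$ because $\mu(\mathbf q_1\ast\mathbf q_1 - \mathbf q_1) = q\ast q - q = 0$.

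Next I would run the standard iteration that improves an approximate idempotent modulo higher powers of $\mathfrak n$. Suppose $\mathbf r$ is self-adjoint with $\mu(\mathbf r) = q$ and $e := \mathbf r\ast\mathbf r - \mathbf r \in \mathfrak n^{k}$ for some $k\ge 1$. Put $\mathbf r' := \mathbf r + (2\mathbf r - 1)\ast e$ (equivalently $3\mathbf r^2 - 2\mathbf r^3$, up to elements of $\mathfrak n^{2k}$); since $2\mathbf r - 1$ commutes with $e$ modulo $\mathfrak n^{2k}$ — indeed $\mathbf r$ commutes with $\mathbf r\ast\mathbf r - \mathbf r$ exactly — a direct computation gives $\mathbf r'\ast\mathbf r' - \mathbf r' \in \mathfrak n^{2k}$, and $\mathbf r'$ is again self-adjoint because $\mathbf r$ and $e = \mathbf r^2 - \mathbf r$ are self-adjoint and $e$ commutes with $\mathbf r$. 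Also $\mu(\mathbf r') = q + (2q-1)(q^2-q) = q$. Since $\mathfrak n^{d} = \{0\}$, after finitely many steps (about $\lceil \log_2 d\rceil$) we reach an honest idempotent $\mathbf q \in \Sc(G)/J$ with $\mathbf q = \mathbf q^*$, $\mathbf q\ast\mathbf q = \mathbf q$, and $\mu(\mathbf q) = q$, as desired. One should check at each stage that the new element still lies in $\Sc(G)/J$ and that the algebraic identities hold in a possibly non-unital algebra — this is routine since all the polynomial expressions in $\mathbf r$, $e$ that appear have no constant term once one expands $(2\mathbf r - 1)\ast e = 2\mathbf r\ast e - e$, and the quotient $\Sc(G)/J$ is an algebra under convolution.

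The only genuine point requiring care — and the place where the hypothesis on $J$ is used essentially — is the passage from the purely algebraic lifting to the assertion that $\mathbf q$ genuinely lives in the \emph{Fr\'echet} algebra $\Sc(G)/J$: the nilpotency \eqref{nilJ} guarantees that the iteration terminates after finitely many steps, so no completion or convergence argument is needed and $\mathbf q$ is a finite convolution polynomial in the Schwartz-class representative of $\mathbf r$. Thus the main (modest) obstacle is simply bookkeeping: verifying that the self-adjointness is preserved through the iteration and that $e$ commutes with $\mathbf r$ in the relevant quotient, both of which follow from the identity $\mathbf r\ast(\mathbf r\ast\mathbf r - \mathbf r) = (\mathbf r\ast\mathbf r - \mathbf r)\ast\mathbf r$. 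Since $\Sc(G)/J \to \Sc(G)/(\ker\pi\cap\Sc(G))$ factors as $(\Sc(G)/J)/((\ker\pi\cap\Sc(G))/J)$, the final compatibility $\mu(\mathbf q) = q$ is automatic from the construction. This mirrors \cite[Lemma~7.6]{LMP13}, and I would remark that the same argument shows $\mathbf q$ is unique up to conjugation by a unit of the form $1 + n$, $n\in\mathfrak n$, should that be needed later.
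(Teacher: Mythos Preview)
Your argument is correct in spirit, with one sign slip: the correction should be $\mathbf r' = \mathbf r - (2\mathbf r - 1)\ast e$, which indeed equals $3\mathbf r^2 - 2\mathbf r^3$, not $\mathbf r + (2\mathbf r-1)\ast e$. With your sign the new defect is $\mathbf r'^{\,2} - \mathbf r' = 2e + (\text{terms in }\mathfrak n^{2k})$, so nothing is gained; with the corrected sign the linear-in-$e$ contributions cancel and $\mathbf r'^{\,2} - \mathbf r' \in \mathfrak n^{2k}$ as you claim. Once this is fixed the iteration terminates, self-adjointness is preserved, and $\mu(\mathbf q)=q$ exactly as you wrote.

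Your route genuinely differs from the paper's. The paper lifts in a single step: by B\'ezout there are $\Psi,\Phi\in\QQ[t]$ with $1 = t^d\Psi(t) + (1-t)^d\Phi(t)$, whence $\psi(t):=t^d\Psi(t)$ satisfies $\psi(t)^2 - \psi(t) = -(t-t^2)^d\Psi(t)\Phi(t)$, so that $\mathbf q := \psi(g)$ is an idempotent as soon as $(g-g\ast g)^d = 0$; one then checks $\Psi(1)=1$ to obtain $\mu(\mathbf q)=q$. Your cubic correction $r\mapsto 3r^2-2r^3$ is precisely the B\'ezout polynomial for $d=2$, iterated $\lceil\log_2 d\rceil$ times instead of taking the degree-$d$ B\'ezout polynomial once. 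Both methods produce a real polynomial with no constant term, so the self-adjointness and the non-unital bookkeeping are handled identically. Your version is the more elementary (no appeal to B\'ezout, no verification that $\Psi(1)=1$), while the paper's is a one-liner once the polynomial identity is in hand; neither has a substantive advantage here.
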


\begin{proof}
Let $q\in  \Sc(G)/(\ker{\pi}\cap \Sc(G))$ be fixed. 
Then consider $g\in \Sc(G)/J$ such that $g^\ast = g$ and $\mu(g) = q$. 
Since $\ker{\mu} = (\ker{\pi} \cap \Sc(G))/J$ and \eqref{nilJ} holds,  it follows that
$$ (g -g\ast g)^d = 0 \quad \text{in} \quad \Sc(G)/J.$$

On the other hand, by Bezout's theorem,  
it is easy to see that there exists two polynomials $\Psi, \Phi\in \QQ[t]$ such that
$1 = t^d \Psi(t) + (1-t)^d \Phi(t)$. 
Hence 
\begin{equation}\label{min-proj:1}
 (t^d \Psi(t))^2 = t^d \Psi(t) - (t-t^2)^d \Psi(t)\Phi(t).
\end{equation}
Denote $\psi(t) = t^d \Psi(t)$ and $\mathbf{q}:= \psi(g) \in  \Sc(G)/J$.
Then using \eqref{min-proj:1} in the commutative subalgebra generated by $g$ we get that
$$ \mathbf{q}\ast \mathbf{q}  = \mathbf{q} \quad \text{in} \quad \Sc(G)/J.$$
Also, $\mathbf{q}$ is self-adjoint since $\psi$ is real and $g$ self-adjoint.

Now, writing  $\Psi(X)=\sum_{i=0}^n \ps_i X^i$, we see that
$$
 1=1^d\Psi(1)+0=\sum_{i=0}^n \ps_i.
$$
Furthermore, since $\mu(g^k) =\mu(g)=q$ for every $k\ge 2$, 
we get that 
$$ \mu(\mathbf{q}) = q^{\ast d}\ast  \Psi(q) \ = q\ast (\sum_{i=0}^n\ps_i q^d)=(\sum_{i=0}^n\ps_i)q=q.$$ 
\end{proof}

\begin{remark}\label{Qxi-rem}
Recall that the ideal $J_{\pi, 0}$ in Corollary~\ref{Jpi0cor} is closed in $\Sc(G)$ and contained in $j(\pi)\cap \Sc(G)$. 
Moreover, since $\Pc_0$ is finite dimensional, Lemma~\ref{lemma0} shows that there is a $d>0$ such that 
$$ (\ker{\pi}\cap \Sc(G))^d \subseteq J_{\pi, 0}.$$ 
Hence we can use Lemma~\ref{min-proj} for $J_{\pi,0}$.
Thus for every $\xi\in \Hc_\pi^\infty$ we can find $Q_\xi=Q_{\xi}^\ast\in \Sc(G)$ such that 
\begin{equation}\label{Qxi}
Q_{\xi}\ast Q_{\xi} =Q_{\xi} \, \text{mod}\, J_{\pi, 0}, \quad \pi(Q_{\xi}) = P_{\xi, \xi}= \scalar{\cdot}{\xi}\xi.
\end{equation}
\end{remark}

\begin{definition}\label{maximalpro}
\normalfont  
Let $\xi $ be a smooth vector in $\Hc_\pi$. 
An element $Q=Q_\xi\in \S(G) $ is called  a \textit{maximal projection} (for $\xi $), if $Q=Q^*$ 
is a positive element in the $C^\ast$ algebra  $C^*(G) $ such that  $\pi(Q)=P_{\xi,\xi} $, and  
$\pi'(Q)\ne 0 $ for any $\pi'\in\widehat G $.
 \end{definition}

\begin{proposition}\label{exmax}
Let $\xi\in \H_\pi^\infty $. 
A  maximal projection $Q=Q_{\xi} $ always exists  
 such that $Q\ast Q- Q\in J_{\pi, 0}\subset j(\pi)$.
\end{proposition}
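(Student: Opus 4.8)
The plan is to take the self‑adjoint element $Q_\xi\in\Sc(G)$ produced in Remark~\ref{Qxi-rem} (so $Q_\xi=Q_\xi^\ast$, $\pi(Q_\xi)=P_{\xi,\xi}$ and $Q_\xi\ast Q_\xi-Q_\xi\in J_{\pi,0}$) and to upgrade it to a maximal projection in two steps: first gaining positivity in $C^\ast(G)$, then gaining the non‑degeneracy $\pi'(Q)\ne0$ for every $\pi'\in\wh G$, while keeping the element idempotent modulo $J_{\pi,0}$. For the positivity I would replace $Q_\xi$ by $Q_1:=Q_\xi^\ast\ast Q_\xi=Q_\xi\ast Q_\xi$: then $Q_1=Q_1^\ast\geq0$ in $C^\ast(G)$, $\pi(Q_1)=P_{\xi,\xi}^2=P_{\xi,\xi}$, and, writing $Q_1=Q_\xi+j_0$ with $j_0\in J_{\pi,0}$ and using that $J_{\pi,0}$ is a two‑sided ideal of $\Sc(G)$, one has $Q_1\ast Q_1-Q_1=Q_\xi\ast j_0+j_0\ast Q_\xi+j_0\ast j_0\in J_{\pi,0}$.

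The heart of the matter is the construction of a positive corrector, and this is the step I expect to be the main obstacle: I claim one can find $c\in J_{\pi,0}$ with $c=c^\ast\geq0$ in $C^\ast(G)$ and $\pi'(c)\ne0$ for every $\pi'\in\wh G\setminus\{\pi\}$. To produce it, fix $d\in\NN$ with $(\ker{\pi}\cap\Sc(G))^d\subseteq J_{\pi,0}$, possible by Remark~\ref{Qxi-rem}. Since coadjoint orbits of a nilpotent Lie group are closed, $\Oc_\pi$ is closed in $\gg^\ast$, so $\gg^\ast\setminus\Oc_\pi$ is open, hence $\sigma$-compact, and therefore its image $\wh G\setminus\{\pi\}$ under the Kirillov homeomorphism is $\sigma$-compact; write $\wh G\setminus\{\pi\}=\bigcup_nK_n$ with $K_n$ compact and increasing. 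For each $\pi'\in\wh G\setminus\{\pi\}$ the maximal ideals $\ker{\pi}$ and $\ker{\pi'}$ are distinct and $\ker{\pi}\cap\Sc(G)$ is dense in $\ker{\pi}$, so there is $f_{\pi'}\in\ker{\pi}\cap\Sc(G)$ with $\pi'(f_{\pi'})\ne0$; I then set $a_{\pi'}:=(f_{\pi'}^\ast\ast f_{\pi'})^{\ast d}$. This $a_{\pi'}$ lies in $(\ker{\pi}\cap\Sc(G))^d\subseteq J_{\pi,0}$, is positive in $C^\ast(G)$, and, a positive operator having the same null space as its $d$-th power, vanishes under exactly the same representations as $f_{\pi'}$, so $\{\pi''\in\wh G\mid\pi''(a_{\pi'})\ne0\}$ is an open neighbourhood of $\pi'$. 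Covering each $K_n$ by finitely many of these sets and letting $c_n\in J_{\pi,0}$ be the corresponding finite sum of the $a_{\pi'}$ yields $c_n=c_n^\ast\geq0$ with $\pi'(c_n)\ne0$ on $K_n$. Finally $c:=\sum_n\lambda_nc_n$, with $\lambda_n>0$ chosen small enough for the series to converge in the Fr\'echet space $\Sc(G)$, lies in $J_{\pi,0}$ (which is closed in $\Sc(G)$), is self‑adjoint and positive, and, for $\pi'\in K_N$, $\pi'(c)$ is a norm‑convergent sum of positive operators whose $N$-th term is nonzero, so $\pi'(c)\ne0$.

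With such a $c$, I would put
$$Q:=Q_1+(1-Q_1)\ast c\ast(1-Q_1)=Q_1+c-Q_1\ast c-c\ast Q_1+Q_1\ast c\ast Q_1\in\Sc(G),$$
where $1$ is the unit of the multiplier algebra of $C^\ast(G)$, so that the right‑hand side is a genuine Schwartz function. Then $Q=Q^\ast$; $Q\geq0$ in $C^\ast(G)$, because $Q_1\geq0$ and $(1-Q_1)\ast c\ast(1-Q_1)=s\ast c\ast s\geq0$ with $s:=1-Q_1$ self‑adjoint and $c\geq0$; and $\pi(Q)=\pi(Q_1)=P_{\xi,\xi}$ since $\pi(c)=0$. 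For $\pi'\ne\pi$, $\pi'(Q)=\pi'(Q_1)+(1-\pi'(Q_1))\pi'(c)(1-\pi'(Q_1))$ is a sum of two positive operators; it dominates $\pi'(Q_1)$, and when $\pi'(Q_1)=0$ it equals $\pi'(c)\ne0$, so $\pi'(Q)\ne0$; together with $\pi(Q)=P_{\xi,\xi}\ne0$ this makes $Q$ a maximal projection. Lastly, $r:=(1-Q_1)\ast c\ast(1-Q_1)\in J_{\pi,0}$ (as $c\in J_{\pi,0}$ and $J_{\pi,0}$ is an ideal), so $Q\ast Q-Q=(Q_1\ast Q_1-Q_1)+(Q_1\ast r+r\ast Q_1+r\ast r-r)\in J_{\pi,0}\subseteq j(\pi)$, which is the last assertion.

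In short, assembling $Q$ from $Q_1$ and $c$ and verifying the five requirements is routine; the real content — and the main obstacle — is the existence of the positive Schwartz corrector $c\in J_{\pi,0}$ that is non‑degenerate on all of $\wh G\setminus\{\pi\}$. The two facts that make it go through are that a sufficiently high convolution power of $f^\ast\ast f$ both lands inside $J_{\pi,0}$ and leaves its non‑vanishing locus unchanged, and that $\wh G\setminus\{\pi\}$ is $\sigma$-compact, so a countable sum of such local corrections, taken in the topology of $\Sc(G)$, does the job.
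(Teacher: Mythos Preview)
Your argument is correct and shares the paper's overall skeleton --- take $Q_1=Q_\xi\ast Q_\xi$ to secure positivity and the right image under $\pi$, then add a positive Schwartz corrector in $J_{\pi,0}$ that is nonzero under every $\pi'\ne\pi$ --- but the construction of the corrector is genuinely different. The paper produces it in a single stroke: by \cite[Prop.~4.7.4]{Di77} there is $D\in\U(\g)$ with $d\pi(D)=0$ and $d\pi'(D)\ne0$ for every $\pi'\ne\pi$; smoothing $D$ by a heat kernel $h_t$ and setting $f=((D\ast h_t)\ast(D\ast h_t)^*)^{\ast N}$ for $N$ large enough places $f$ in $J_{\pi,0}$ while keeping $\pi'(f)\ne0$ off $\{\pi\}$, and then one simply takes $Q=Q_1+f$. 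Your route instead exhausts $\wh G\setminus\{\pi\}$ by compacta and assembles a Schwartz-convergent series of local positive correctors $(f_{\pi'}^*\ast f_{\pi'})^{\ast d}$; this is more elementary and self-contained (no enveloping-algebra input) at the cost of a longer construction. Incidentally, the conjugated form $Q_1+(1-Q_1)\ast c\ast(1-Q_1)$ is more than you need: since $Q_1\ge0$ and $c\ge0$, already $\pi'(Q_1+c)\ge\pi'(c)\ne0$ for $\pi'\ne\pi$, so the paper's simpler $Q=Q_1+f$ suffices.
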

\begin{proof}
Choose any $Q'=(Q')^*\in \S(G) $ as in Remark~\ref{Qxi-rem}, with $\pi(Q')=P_{\xi, \xi}$,  and take $Q_1= Q' \ast Q'$. 
There exists by \cite[Prop.~4.7.4]{Di77} an element  $D\in\U(\g) $, such that $d\pi(D)=0 $ and 
$d\pi'(D)\ne 0 $ for any $\pi'\in\widehat G, \pi'\ne \pi $. 
Let $h_t$, $t>0,$ be any heat kernel 
function for  some full Laplacian on $G$. 
The operator $\pi'(h_t) $ has dense range, and therefore, for an $N$ is large enough, the function $f:=((D\ast h_t)\ast (D\ast h_t)^*)^{\ast N_\pi} \in \S(G)$
has the property that $f \in J_{\pi, 0}(\pi) $ and $\pi'(f)\ne 0$ for $\pi'\ne \pi $. 
Then $Q:=Q_1+f $ satisfies the conditions in the statement.
\end{proof}

\begin{remark}\label{TheQ}
For the rest of the paper, when $\xi \in \Hc_\pi^\infty$ is fixed, $Q=Q_\xi$ will be a maximal projection 
satisfying the conditions of Proposition~\ref{exmax}.
\end{remark}

\subsection{A correspondence between ideals with hull $\pi$ and the ideals of a finite dimensional algebra}
\begin{definition}\label{defjhpn}
\normalfont 
\begin{enumerate}
\item\label{defjhpn_1} 
Let $\II^\pi $ be the set of all closed two-sided ideals $I $ in $L^1(G) $ with hull  $h(I)=\{\pi\} $.
\item\label{defjhpn_2}
Let also $\Cc_\pi$
be  the vector space of all smooth coefficient functions $c_A, A\in \mathbb B(\H_\pi)_\iy $.  
\item\label{defjhpn_3}  
Let $p\in\PP_0 $ and let   $\{p_m,\dots, p_1=1\} $ be a Jordan-H\"older basis  of  the  $\lambda\otimes\rho $  sub-module  
$\Vc_p $ of $\PP_0 $ generated by $p$.
Then   for  $j\in \{0,\dots, m\}$,  let $\V_p^jC_\pi $ 
be the space of all  functions of the form $\sum_{i=1}^j p_i \varphi_i $, where the $\varphi_i$, $i=m,\dots, 1$ are  functions in $\Cc_\pi $. 
 \end{enumerate}
 \end{definition}
For $ \varphi\in \ker\pi^\perp$,  $\de,\eta \in \Hc_\pi^\infty$ of norm $1$, and  $Q_\et,Q_\de$ as in Remark~\ref{TheQ},
let us compute $Q_\de\ast(p_j\varphi)\ast Q_\et$ using (\ref{cpmulfg}), (\ref{trnscoeff}) and (\ref{vaniv:1}), as follows:
\begin{eqnarray}\label{qastcexp}
 \check Q_\de\ast(p_j\varphi)\ast \check Q_\et&=&
\sum_{i=1}^j(p_i((a_{i,j} \check Q_\de)\ast \va))\ast \check Q_\eta 
\nonumber
\\
&=&( p_j(\check Q_\de\ast\va))\ast \check Q_\et+ \sum_{i=1}^{j-1} (p_i((\check a_{i,j}\check Q_\de)\ast \va))\ast \check Q_\eta
\nonumber
\\
&=&\sum\limits_{i=1}^{j}p_i(\check Q_\de\ast \va \ast b_{i,j}\check Q_\eta)
+ \sum\limits_{i=1}^{j-1} \sum_{k=1}^{i}p_k((\check a_{i,j}\check Q_\de)\ast \va\ast (b_{k,i}\check Q_\eta)) 
\label{hert} \\
&=&\langle{\de},{\eta}\rangle \langle{\va},{\check Q_\de\ast \check Q_\et}\rangle p_jc_{\et,\de}
+\sum\limits_{i=1}^{j-1}p_i \varphi_i
\nonumber
 \end{eqnarray}

for some $\varphi_{j-1},\dots, \varphi_1\in \ker\pi^\perp $.
Furthermore, since $ p_j\va\in J_{\pi,0}^\perp $, 
$$
 \check Q_\de\ast p_j\va\ast \check Q_\et=\check Q_\de\ast \check Q_\de\ast(p_j\varphi)\ast \check Q_\et\ast \check Q_\et.
$$
This shows inductively that
 \begin{equation}\label{pinguin}
 \check Q_\de\ast p\varphi\ast \check Q_\et = \check Q_\de \ast \tilde{p}c_{\et, \de} \ast \check Q_\et, 
 \end{equation}
for some $\tilde{p}=p+p'\in \Vc_p$ with $p'\in \V_p^{m-1} $.

\begin{theorem}\label{expjpi}
Let $\de,\et\in\H_{\pi}^\iy $ of norm 1 and $I\in\II^\pi $. Then for any $\ps\in I^\perp$ there exists $p\in \PP_0 $ such that
$$
 \check Q_\de\ast \ps\ast \check Q_\et=\check Q_\de\ast (pc_{\et,\de})\ast \check Q_\et
 $$
on $\S(G) $.  
 \end{theorem}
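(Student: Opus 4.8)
The plan is to show that the finite‑dimensional space $\PP_0$ of polynomials from Theorem~\ref{Jpi0cor} is large enough to capture, modulo the compression by $\check Q_\de$ and $\check Q_\et$, every functional that annihilates a given primary ideal. First I would recall that $\ps\in I^\perp\subseteq BC^\infty(G)$ and, since $I$ has hull $\{\pi\}$, that $I\supseteq j(\pi)\supseteq J_{\pi,0}$; hence $\ps\in J_{\pi,0}^\perp$, i.e.\ $p\ps\in(\ker\pi\cap\S(G))^\perp = \ker\pi^\perp$ for every $p\in\PP_0$. In particular $\ps\in\ker\pi^\perp\cap BC^\infty(G)$, so $\ps$ plays the role of the function $\va$ in Section~\ref{subsect1} and Definition~\ref{defjhpn}.

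The key computation is already essentially contained in formula~\eqref{pinguin}: for each $p\in\PP_0$ one has
$$
\check Q_\de\ast p\ps\ast\check Q_\et=\check Q_\de\ast\tilde p\,c_{\et,\de}\ast\check Q_\et
$$
for some $\tilde p\in\Vc_p\subseteq\PP_0$. Applying this with $p=1$ (the constant polynomial, which lies in $\PP_0$ since $1=p_1$ in any Jordan–Hölder basis and $\PP_0$ is $G\times G$‑invariant and contains constants) immediately gives
$$
\check Q_\de\ast\ps\ast\check Q_\et=\check Q_\de\ast c_{\et,\de}\ast\check Q_\et
$$
modulo the term $p'c_{\et,\de}$ with $p'\in\V_1^{0}C_\pi=\{0\}$; so in fact the conclusion holds with $p=\langle{\de},{\eta}\rangle\langle{\ps},{\check Q_\de\ast\check Q_\et}\rangle$ a constant, hence trivially in $\PP_0$. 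Wait — but that is too strong: the point of stating the theorem with a general $p\in\PP_0$ rather than a constant must be that $Q_\de,Q_\et$ are maximal projections satisfying only $Q\ast Q\equiv Q$ mod $J_{\pi,0}$, not mod $I$; so I should not collapse $c_{\et,\de}$ to a constant but rather argue as follows. Write $\ps\in I^\perp$. Since $\ps\in\ker\pi^\perp$, \eqref{vaniv:1} (with suitable $\xi,\xi'$) shows $\check F_{\eta',\xi'}\ast\ps\ast\check F_{\xi,\eta}=\langle{\ps},F_{\xi,\xi'}\rangle c_{\eta,\eta'}$; choosing $\xi,\xi'$ so that $\langle{\ps},F_{\xi,\xi'}\rangle=1$ exhibits $c_{\et,\de}$ (after adjusting the roles of the vectors) as a two‑sided translate‑and‑convolve image of $\ps$. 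Then, because $Q_\de,Q_\et$ have $\pi(Q_\de)=P_{\de,\de}$, $\pi(Q_\et)=P_{\et,\et}$, the compression $\check Q_\de\ast(\,\cdot\,)\ast\check Q_\et$ sends $\ps$ and $c_{\et,\de}$ to elements that agree up to something in $(\ker\pi)^\perp$ — and by the bookkeeping of \eqref{qastcexp}–\eqref{pinguin} that discrepancy is absorbed into a $\PP_0$‑combination of coefficient functions. Setting $p$ to be the resulting element of $\Vc_{1}+(\text{lower }\PP_0\text{-terms})\subseteq\PP_0$ finishes the proof.

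The main obstacle is the correct choice of $\ps$'s decomposition: one must use that $\ps\in I^\perp\subseteq J_{\pi,0}^\perp$ to guarantee $p\ps\in\ker\pi^\perp$ for all $p\in\PP_0$ — this is exactly what makes $\ps$ a legitimate substitute for $\va$ in Definition~\ref{defjhpn}\eqref{defjhpn_3} and \eqref{pinguin} — together with the fact, from Remark~\ref{TheQ} and Proposition~\ref{exmax}, that $Q_\de\ast Q_\de-Q_\de$ and $Q_\et\ast Q_\et-Q_\et$ lie in $J_{\pi,0}\subseteq I$, so that inserting extra factors of $\check Q_\de$, $\check Q_\et$ as in the displayed identity $\check Q_\de\ast p_j\va\ast\check Q_\et=\check Q_\de\ast\check Q_\de\ast(p_j\ps)\ast\check Q_\et\ast\check Q_\et$ does not change anything modulo $I^\perp$. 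Granting these two inputs, the theorem follows by specializing \eqref{pinguin} and tracking which subspace of $\PP_0$ the polynomial $\tilde p$ lands in.
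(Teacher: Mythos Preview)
There is a genuine gap. Your key claim ``$\psi\in J_{\pi,0}^\perp$, i.e.\ $p\psi\in(\ker\pi\cap\Sc(G))^\perp$ for every $p\in\PP_0$'' is wrong in the direction you need. The implication goes only the other way: if $p\psi\in\ker\pi^\perp$ for all $p\in\PP_0$, then for $f\in J_{\pi,0}$ one has $pf\in\ker\pi$ and hence $\langle\psi,f\rangle=\langle 1\cdot\psi,f\rangle=0$. But from $\psi\in J_{\pi,0}^\perp$ alone you cannot conclude $\langle\psi,pg\rangle=0$ for $g\in\ker\pi\cap\Sc(G)$, because $pg$ need not lie in $J_{\pi,0}$ (nor even in $\ker\pi$). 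In particular the special case $p=1$, namely $\psi\in\ker\pi^\perp$, is \emph{false} in general: since $I\subsetneq\ker\pi$ for a nontrivial primary ideal, one has $I^\perp\supsetneq\ker\pi^\perp$, and a generic $\psi\in I^\perp$ will not annihilate $\ker\pi$. This is precisely why the theorem is not trivial.

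Once $\psi\in\ker\pi^\perp$ fails, the computations \eqref{qastcexp}--\eqref{pinguin} no longer apply: they are stated for $\varphi\in\ker\pi^\perp$, which is what makes the leading term collapse via \eqref{vaniv:1} to a multiple of $c_{\eta,\delta}$. Your attempted repair (``the discrepancy is absorbed into a $\PP_0$-combination'') is circular, since that absorption is exactly what \eqref{pinguin} provides \emph{under} the hypothesis $\varphi\in\ker\pi^\perp$. The paper's proof avoids this by a duality argument: choose a polynomial weight $\omega$ bounding $\PP_0$, observe that $\psi\in j(\pi)^\perp\subset L^\infty(G)\subset L^\infty(G,\omega)$, and show that the pre-annihilator of $\PP_0\ker\pi^\perp$ in $L^1(G,\omega)$ is sandwiched between $J_{\pi,0}$ and $j(\pi)\cap L^1(G,\omega)$. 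Hence $\psi$ lies in the weak$^*$ closure of $\operatorname{span}\{p\varphi:p\in\PP_0,\ \varphi\in\ker\pi^\perp\}$. Now one applies \eqref{pinguin} to each $p\varphi$ (where it \emph{is} legitimate) and passes to the limit; since $\check Q_\delta\ast(\PP_0 c_{\eta,\delta})\ast\check Q_\eta$ is finite-dimensional, the weak$^*$ closure is the space itself, yielding the desired $p\in\PP_0$.
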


\begin{proof}
Let $\omega\colon G\mapsto [1,\infty[ $ be a smooth symmetric polynomial weight, such that $\PP_0 $ is bounded by $\omega$.
(Such a weight always exists, see \cite{LM10}.)  
Then the sub-space $\PP_0 \ker\pi^\perp $ spanned by the functions $p\varphi$, $p\in\PP_0$, $\varphi\in\ker\pi^\perp $ is contained in $L^\infty(G,\om)$ 
and 
$$
j(\pi)\cap L^1(G,\omega)\supset\{f\in L^1(G,\omega)\mid \dual{\ps}{f}=0, \, \psi \in \PP_0 \ker\pi^\perp\}\supset J_{\pi,0}.
$$

Let now $\psi\in j(\pi)^\perp$. 
Then $ \ps$ is contained $L^\infty(G,\omega) $ and thus in the weak$ ^*$-limit of 
$\text{span}\{p\varphi \mid  p\in\Pc_0,\varphi \in \ker \pi^\perp\}\subset L^\infty(G,\omega)$.
Hence  $ \check Q_\de\ast \ps\ast \check Q_\eta$ is  contained in the weak$^* $ closure of 
$ \check Q_\de\ast (\Pc_0 c_{\et,\de})\ast {\check Q}_\eta$  and therefore is contained in $ \check Q_\de\ast (\Pc_0 c_{\et,\de})\ast \check Q_\eta$,  because this is  a finite dimensional space.
\end{proof}

\begin{remark}\label{qpsq}
\normalfont
Let $ \psi\in j(\pi)^\perp$ and $ \de,\et\in \H_\pi^\infty$. 
Then there exists $ p\in\PP_0$, such that 
\begin{eqnarray}\label{qastpspc}
\check{ Q}_\de\ast \ps\ast {\check Q}_\eta =pc_{\et,\de}+\sum_{i=1}^{m-1}p_i\ps_i
 \end{eqnarray}
where $\ps_i\in \Cc_\pi $ and where $ \{p=p_m,\cdots, p_1\}$ is  a Jordan-H\"older basis of $ \V_p$.
  Indeed by Theorem \ref{expjpi}, we have this $ p\in\PP_0$ such that $ {\check Q}_\de\ast\ps\ast 
  {\check Q}_\eta={\check Q}_\de\ast pc_{\et,\de}\ast {\check Q}_\eta$. 
  Then by (\ref{hert}) we have
\begin{eqnarray}\label{eeqppsq}
\nonumber \check Q_\de\ast\ps\ast \check Q_\eta &= & \check Q_\de\ast pc_{\et,\de}\ast \check Q_\et\\
&=& \sum_{i=1}^{j}p_i(\check Q_\de\ast c_{\et,\de} \ast b_{i,j}\check Q_\et)
+ \sum_{i=1}^{j-1} \sum_{k=1}^{i}p_k((\check a_{i,j}\check Q_\de)\ast c_{\et,\de}\ast (b_{k,i}\check Q_\et))
\\
&=&\sum_{i=1}^{j}p_ic_{\pi(b_{i,j}\check Q_\eta)\eta,\pi( Q_\de)^*\de} )
+ \sum_{i=1}^{j-1} \sum_{k=1}^{i}p_k c_{\pi(\check b_{k,i} Q_\et)\eta, \pi( a_{i,j} Q_\de)^*\de,})
\nonumber
 \end{eqnarray}
 \end{remark}

\begin{corollary}\label{findim}
Let $\xi\in \H_\pi^\infty$ of norm $1$, and $Q=Q_\xi$.
The vector space $Q\ast (\ker\pi/j(\pi)\ast Q$ is finite dimensional.
 \end{corollary}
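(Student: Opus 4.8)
The plan is to deduce Corollary~\ref{findim} directly from Theorem~\ref{expjpi} (equivalently from the computation in Remark~\ref{qpsq}) by a duality argument. First I would recall that $\ker\pi/j(\pi)$, as a quotient of Banach spaces, has dual space canonically isomorphic to $(\ker\pi/j(\pi))^\ast \cong j(\pi)^\perp \cap (\ker\pi)^{\ast\ast}$; more concretely, since $j(\pi)\subseteq\ker\pi$, the annihilator of $\ker\pi/j(\pi)$ inside $L^1(G)^\ast = L^\infty(G)$ is the set of $\psi\in L^\infty(G)$ that annihilate $j(\pi)$ but need not annihilate $\ker\pi$, modulo $\ker\pi^\perp$. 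In other words, the dual of the vector space $\ker\pi/j(\pi)$ is naturally identified with $j(\pi)^\perp/\ker\pi^\perp$ inside $L^\infty(G)$.

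The key step is then the following: to show the image space $Q\ast(\ker\pi/j(\pi))\ast Q$ is finite dimensional, it suffices to exhibit a finite-dimensional space of linear functionals that separates its points, because a normed (or even just locally convex Hausdorff) space admitting a finite separating family of functionals has dimension bounded by the size of that family. A functional on $Q\ast(\ker\pi/j(\pi))\ast Q$ is given by pairing with some $\psi\in L^\infty(G)$; the relevant pairing is $\langle\psi, \check Q\ast f\ast\check Q\rangle$ for $f\in\ker\pi$, and by associativity of the pairing this equals $\langle \check Q\ast\psi\ast\check Q, f\rangle$ up to taking the appropriate $\check{}$-conventions. The functionals that vanish on $Q\ast(\ker\pi/j(\pi))\ast Q$ are exactly those for which $\check Q\ast\psi\ast\check Q$ annihilates $\ker\pi$; the ones that matter run over $\psi\in j(\pi)^\perp$. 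So the separating space of functionals is (a quotient of) $\{\,\check Q_\xi\ast\psi\ast\check Q_\xi \mid \psi\in j(\pi)^\perp\,\}$. But by Theorem~\ref{expjpi}, applied with $\delta=\eta=\xi$, every such element equals $\check Q_\xi\ast(pc_{\xi,\xi})\ast\check Q_\xi$ for some $p\in\PP_0$, and since $\PP_0$ is finite dimensional and $c_{\xi,\xi}$ is fixed, this space $\check Q_\xi\ast(\PP_0\, c_{\xi,\xi})\ast\check Q_\xi$ is finite dimensional (of dimension at most $\dim\PP_0$). Hence the functionals separating $Q\ast(\ker\pi/j(\pi))\ast Q$ span a finite-dimensional space, and therefore $Q\ast(\ker\pi/j(\pi))\ast Q$ is itself finite dimensional.

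I would write this up as: the bilinear pairing $(\psi, f)\mapsto \langle\psi, \check Q\ast f\ast\check Q\rangle$ descends to $j(\pi)^\perp \times \big(Q\ast(\ker\pi/j(\pi))\ast Q\big)$; for fixed $f$, Theorem~\ref{expjpi} forces $\langle\psi, \check Q\ast f\ast\check Q\rangle = \langle \check Q\ast\psi\ast\check Q, f\rangle = \langle \check Q\ast(pc_{\xi,\xi})\ast\check Q, f\rangle$ for an appropriate $p=p_\psi\in\PP_0$, so the value depends on $\psi$ only through the finite-dimensional space $\check Q\ast(\PP_0 c_{\xi,\xi})\ast\check Q$; consequently the induced map $j(\pi)^\perp \to \big(Q\ast(\ker\pi/j(\pi))\ast Q\big)^\ast$ has finite-dimensional range, and since this range separates points of $Q\ast(\ker\pi/j(\pi))\ast Q$ (a nonzero element of $\ker\pi/j(\pi)$, hence of the corner, is detected by some $\psi\in j(\pi)^\perp$), we conclude $\dim\big(Q\ast(\ker\pi/j(\pi))\ast Q\big)\le \dim\PP_0 < \infty$.

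The main obstacle I anticipate is purely bookkeeping rather than conceptual: keeping straight the $\check{}$-conventions and the left/right placement of $Q$ so that the associativity identity $\langle\psi, \check Q\ast f\ast\check Q\rangle = \langle \check Q\ast\psi\ast\check Q, f\rangle$ is applied correctly (the involution and the $f\mapsto\check f$ map interact, as in \eqref{fconco}), and making precise the claim that $j(\pi)^\perp$ restricted to the corner separates points — this last point is really just that $j(\pi)$ is by definition the closure-driven object with $\ker\pi^{N_\pi}$ dense in it, so $j(\pi)^\perp$ is exactly the annihilator and separates $\ker\pi/j(\pi)$, and multiplying by the projection $Q$ on both sides is an idempotent operation compatible with the pairing. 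Everything else is immediate from Theorem~\ref{expjpi} and finite-dimensionality of $\PP_0$.
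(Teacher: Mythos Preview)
Your proposal is correct and follows essentially the same route as the paper. The paper's proof is simply the terse two-line version of your argument: it asserts directly that $\check Q\ast j(\pi)^\perp\ast\check Q = \bigl(Q\ast(\ker\pi/j(\pi))\ast Q\bigr)^{\ast}$, notes this is contained in the finite-dimensional space $\check Q\ast(\PP_0\,c_{\xi,\xi})\ast\check Q$ by Theorem~\ref{expjpi}, and concludes; your write-up unpacks exactly this duality and the separating-functionals step, which is helpful but not a different idea.
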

\begin{proof} 
We have seen in Theorem~\ref{expjpi} that 
$$\check Q\ast \PP_0c_{\xi,\xi}\ast \check Q\supset \check Q\ast j(\pi)^\perp\ast \check Q
=( Q\ast (\ker\pi/j(\pi))\ast  Q)^* $$ is finite dimensional. 
Hence the space $Q\ast (\ker\pi/j(\pi))\ast Q$ is itself finite dimensional.
\end{proof}

\begin{definition}\label{Qgen}
\normalfont 
Let $ Q=Q_\xi$ be  as in Remark~\ref{TheQ}.
\\
i) We denote by  $ M^Q$  the span of the subset $ \{f\ast Q\ast h\mid  h,f\in \L1G\}$. 
\\
ii) For every two-sided closed ideal $ I$ in $ \II^\pi$, let 
\begin{equation}\label{IQ}
 I_Q:=Q\ast (I/ j(\pi))\ast Q.
 \end{equation}
Then $ I_Q$ is an  ideal in the finite dimensional algebra $ \ker\pi_Q=Q\ast \ker \pi/j(\pi)\ast Q$.
\end{definition}

\begin{proposition}\label{mqdense}
The subspace $ M^Q$ is dense in $L^1(G)$ modulo $\ker\pi$. 
 \end{proposition}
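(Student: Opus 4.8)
The plan is to exploit that $\ker\pi$ is a \emph{maximal} closed two‑sided ideal of $L^1(G)$ (recalled in Section~\ref{idlone}): any closed two‑sided ideal of $L^1(G)$ containing $\ker\pi$ must then be either $\ker\pi$ itself or all of $L^1(G)$. So it suffices to identify $\overline{M^Q+\ker\pi}$ as a closed two‑sided ideal containing $\ker\pi$ and to exclude the possibility that it equals $\ker\pi$.

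First I would record that $M^Q$ is a two‑sided algebra ideal of $L^1(G)$. Indeed $Q\in\Sc(G)\subset L^1(G)$, so $M^Q$ is a linear subspace of $L^1(G)$, and by associativity of convolution $g\ast(f\ast Q\ast h)=(g\ast f)\ast Q\ast h\in M^Q$ and $(f\ast Q\ast h)\ast g=f\ast Q\ast(h\ast g)\in M^Q$ for all $f,g,h\in L^1(G)$. Hence $M^Q+\ker\pi$ is a two‑sided algebra ideal of $L^1(G)$ containing $\ker\pi$, and its closure $I:=\overline{M^Q+\ker\pi}$ in $L^1(G)$ is a closed two‑sided ideal with $\ker\pi\subseteq I\subseteq L^1(G)$.

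By maximality of $\ker\pi$, either $I=\ker\pi$ or $I=L^1(G)$, and it remains to rule out $I=\ker\pi$. If $I=\ker\pi$, then $M^Q\subseteq\ker\pi$, so for all $f,h\in L^1(G)$ we would have $0=\pi(f\ast Q\ast h)=\pi(f)\,\pi(Q)\,\pi(h)=\pi(f)\,P_{\xi,\xi}\,\pi(h)$. Since $P_{\xi,\xi}\pi(h)v=\scalar{v}{\pi(h)^*\xi}\,\xi$ for $v\in\Hc_\pi$, this operator equals $\scalar{\cdot}{\pi(h)^*\xi}\,\pi(f)\xi=P_{\pi(f)\xi,\,\pi(h)^*\xi}$, which is nonzero as soon as $\pi(f)\xi\ne0$ and $\pi(h)^*\xi\ne0$. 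Such $f$ and $h$ exist because $\pi$ is a nondegenerate representation of $L^1(G)$ and $\xi\ne0$ (concretely, take $f$ and $h^*$ from an approximate identity of $L^1(G)$, so that $\pi(f)\xi\to\xi$ and $\pi(h)^*\xi=\pi(h^*)\xi\to\xi$). This contradiction forces $I=L^1(G)$, which is exactly the assertion that $M^Q$ is dense in $L^1(G)$ modulo $\ker\pi$.

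The proof is short because the substantive input — a $Q\in\Sc(G)$ with $\pi(Q)=P_{\xi,\xi}\ne0$ — has already been supplied by Proposition~\ref{exmax}, and maximality of $\ker\pi$ is standard. The only point requiring a little care is checking that $M^Q$ is genuinely two‑sided, so that $M^Q+\ker\pi$ is an ideal to which maximality applies; this uses nothing about $Q$ beyond $Q\in L^1(G)$ and associativity of $\ast$. I do not anticipate any real obstacle.
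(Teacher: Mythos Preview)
Your proof is correct and takes a genuinely different, more conceptual route than the paper. You observe that $M^Q$ is a two-sided algebra ideal of $L^1(G)$, so that $\overline{M^Q+\ker\pi}$ is a closed two-sided ideal containing $\ker\pi$; then you invoke the maximality of $\ker\pi$ (recorded in Section~\ref{idlone}) and rule out $\overline{M^Q+\ker\pi}=\ker\pi$ using $\pi(Q)=P_{\xi,\xi}\ne0$ together with nondegeneracy of $\pi$ on $L^1(G)$. (In fact you could shorten this last step: an approximate identity gives $Q\in\overline{M^Q}$, so $M^Q\subseteq\ker\pi$ would force $Q\in\ker\pi$, a contradiction.)

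By contrast, the paper's argument is constructive: given $f\in\Sc(G)$ and $\varepsilon>0$, it produces via a functional-calculus construction (after \cite{Di60}) a function $\psi\{h\}$ such that $\pi(\psi\{h\})$ is a smooth finite-rank operator and $\|\psi\{h\}\ast f-f\|_1<\varepsilon$; it then writes $\pi(\psi\{h\}\ast f)$ explicitly as a finite sum $\sum\pi(f_i\ast Q\ast g_i)$, yielding an explicit $m\in M^Q$ and $k\in\ker\pi$ with $\|f-m-k\|_1<\varepsilon$. Your approach is shorter and uses only the single soft fact that primitive ideals of $L^1(G)$ are maximal; the paper's approach, while heavier, exhibits the approximating element of $M^Q$ concretely and foreshadows the iterative approximation scheme used immediately afterwards in Proposition~\ref{mqgene}.
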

\begin{proof}
Indeed, let $f\in \S(G)$ and $\ep>0 $. 
There exists an $N\in\NN $ large enough, such that if  $h=h^* \geq 0 $ is any self-adjoint 
smooth function of compact support of norm $\no h_1=1 $ on $G $,  then there  exists a smooth real 
valued compactly supported function $\psi$ on $\R$ vanishing in a neighbourhood of $0 $, such that
$$
 \no{\ps\{h\}-h^{\ast N}}_1<\ep, 
$$
where $\psi\{h\}=(2\pi)^{-1} \int_\R \hat \psi(t)e^{\ast it h}\, dt.$
 The function $\psi\{f\}$ is contained in the minimal ideal $j(\emptyset) $ 
 and has the property that $\pi(\psi\{h\}) $ is a self-adjoint  finite rank operator. 
We can now choose $h$ as above such that $\norm{\psi\{h\}\ast f-f}_1<\ep $. (See \cite{Di60}.) 
The operator  $\pi(\psi\{h\}\ast f) $ is smooth and of finite rank. 
Hence we can write
$$
 \pi(\psi\{h\}\ast f)=\sum_{\text{finite}}\pi(f_i\ast Q\ast g_i)
$$
for some finite family $\{f_i,g_i\} $ of Schwartz-functions. Let 
$m:=\sum_{\text{finite}}f_i\ast Q\ast g_i $. 
Then $k:= \psi\{h\}\ast f-m\in\ker\pi$ and 
$$
 \norm{f-m -k}_1<\ep.
$$
\end{proof}

\begin{proposition}\label{mqgene}
Let $\xi \in \Hc_\pi^\infty$ of norm $1$, $Q=Q_\xi $.
Then for every  $I\in \II^\pi $ we have 
$$
 I=\overline{\opn{span}\{L^1(G)\ast Q\ast I\ast Q\ast L^1(G)\}}
$$
\end{proposition}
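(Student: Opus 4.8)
The statement to prove is Proposition~\ref{mqgene}: for $I\in\II^\pi$ and a maximal projection $Q=Q_\xi$,
$$ I=\overline{\opn{span}\{L^1(G)\ast Q\ast I\ast Q\ast L^1(G)\}}. $$
One inclusion is trivial: since $I$ is a closed two-sided ideal of $L^1(G)$, the right-hand side is contained in $I$. So the work is the reverse inclusion, i.e.\ to show $I$ is contained in the closure of $\opn{span}\{L^1(G)\ast Q\ast I\ast Q\ast L^1(G)\}$.

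First I would reduce modulo $j(\pi)$. Since $j(\pi)\subseteq I$ (because $\opn{hull}(I)=\{\pi\}$ forces $I\supseteq j(\pi)$), and since by Proposition~\ref{exmax}/Remark~\ref{TheQ} we have $Q\ast Q-Q\in J_{\pi,0}\subseteq j(\pi)$, it suffices to work in the quotient algebra $A:=L^1(G)/j(\pi)$, whose only irreducible representation with the relevant support is $\pi$, and where $\bar Q$ is a genuine self-adjoint idempotent with $\pi(\bar Q)=P_{\xi,\xi}$. In $A$ the image $\bar I$ of $I$ is a closed two-sided ideal. The claim becomes: $\bar I=\overline{\opn{span}\{A\ast\bar Q\ast\bar I\ast\bar Q\ast A\}}$.

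The engine is Proposition~\ref{mqdense}, which says $M^Q=\opn{span}\{f\ast Q\ast h\}$ is dense in $L^1(G)$ modulo $\ker\pi$; together with the standard fact that $\ker\pi$ itself is approximated from inside $I$ once we know $j(\pi)\subseteq I$ and $\ker\pi^{N_\pi}\subseteq j(\pi)$. Concretely: take $f\in I$. I want to approximate $f$ in $L^1$-norm by elements of $\opn{span}\{L^1\ast Q\ast I\ast Q\ast L^1\}$. Using an approximate identity argument and Proposition~\ref{mqdense} applied cleverly, write $f\approx \sum_i g_i\ast Q\ast h_i + (\text{something in }\ker\pi)$; but the $\ast Q\ast$ has to appear on both sides. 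So the right move is: approximate $f$ by $u\ast f\ast v$ with $u,v$ suitable (using a bounded approximate identity of $L^1(G)$, which exists since $G$ is a locally compact group), then apply Proposition~\ref{mqdense}-type density to write $u\approx \sum g_i\ast Q\ast h_i\pmod{\ker\pi}$ and $v\approx\sum Q\ast k_j\ast\ell_j$ correspondingly — here one needs a version that produces $Q$ on the correct side, obtained by taking adjoints or by the symmetric statement $Q\ast L^1 \ast L^1$ dense. Then $u\ast f\ast v\approx\sum_{i,j} g_i\ast Q\ast(h_i\ast f\ast k_j)\ast Q\ast \ell_j$ modulo a term in $\ker\pi\ast f\ast L^1 + L^1\ast f\ast\ker\pi\subseteq I\cap(\text{correction})$, and $h_i\ast f\ast k_j\in I$ since $I$ is an ideal. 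The leftover error sits in $\ker\pi$; iterating this $N_\pi$ times and using $\ker\pi^{N_\pi}\subseteq j(\pi)\subseteq \overline{\opn{span}\{\dots\}}$ (the last because $j(\pi)=\overline{L^1\ast Q\ast L^1\ast Q\ast L^1}\cap j(\pi)$-type reasoning, or more simply because $j(\pi)\subseteq I$ and one shows $j(\pi)$ itself lies in the RHS via the same density argument with $f$ replaced by an element of $j(\pi)$) closes the argument.

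**Main obstacle.** The delicate point is controlling the error term so that the iteration actually terminates: each application of the density-of-$M^Q$ step introduces an error in $\ker\pi$, and one must argue that after $N_\pi$ rounds the accumulated error lies in $\ker\pi^{N_\pi}\subseteq j(\pi)$, while simultaneously keeping the main term inside $\opn{span}\{L^1\ast Q\ast I\ast Q\ast L^1\}$ and keeping norms under control (this is where the bounded approximate identity and the finite-dimensionality of $\ker\pi_Q$ from Corollary~\ref{findim} are used to avoid blow-up). I expect the bookkeeping of "error in $\ker\pi$, multiply, error now in $\ker\pi^2$, \dots" to be the technical heart; the algebraic skeleton (ideal property of $I$, $j(\pi)\subseteq I$, $\bar Q$ idempotent mod $j(\pi)$) is straightforward.
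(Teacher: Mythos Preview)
Your approach is essentially the same as the paper's: both use Proposition~\ref{mqdense} to write an approximate identity element as $M^Q$-part plus $\ker\pi$-part, iterate $N_\pi$ times so the residual error lands in $\ker\pi^{N_\pi}\subseteq j(\pi)$, and then absorb the $j(\pi)$ remainder. The paper does the left side first and then the right, while you do both simultaneously; the paper frames it as a duality argument (take $\varphi$ in the annihilator and show $\langle\varphi,I\rangle=0$), while you argue by direct norm approximation --- these are cosmetic differences.

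Two small points where the paper is tighter. First, to swallow the final $j(\pi)$ remainder the paper does not rerun the density argument on elements of $j(\pi)$; instead it observes that $\overline{M^Q\ast j(\pi)\ast M^Q}$ is a closed two-sided ideal contained in $j(\pi)$ whose hull is $\{\pi\}$ (here the \emph{maximality} of $Q$ is used: $\pi'(Q)\ne 0$ for all $\pi'$), and hence by minimality of $j(\pi)$ it equals $j(\pi)$. This is cleaner than your suggestion of repeating the argument or invoking a formula of the type $j(\pi)=\overline{L^1\ast Q\ast L^1\ast Q\ast L^1}\cap j(\pi)$. Second, the finite-dimensionality of $\ker\pi_Q$ from Corollary~\ref{findim} is not needed anywhere in this proof; the norm bookkeeping is elementary (each step introduces a controlled $\epsilon$-error and one simply iterates $N_\pi$ times), so you can drop that from your list of ingredients.
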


\begin{proof}
Let $ \varphi\in \{L^1(G)\ast Q\ast I\ast Q\ast L^1(G)\}^\perp$.  
It is enough to show that $ \dual{\varphi}{I} =\{0\}$.

Let $ 0\ne f\in I$ of norm 1. 
For $ \ep>0$, we choose $ h\in L^1(G)$ such that $ \norm{h\ast f-f}_1<\ep$. 
There is $ h_0\in\ker\pi$ and $ m_1\in M^Q$ such that 
$$
 \norm{h-h_0-m_1}_1< \ep.
$$
Then
$$
 \norm{f-m_1\ast f-(h_0\ast f)}_1<2\ep,
$$
Continuing in this way, we find $ h_1\in\ker\pi$ and $ m_2\in M^Q$ such that 
$$
 \norm{h_0\ast f-m_2\ast h_0\ast f-h_1\ast h_0\ast f}_1<3\ep.
$$

Repeating the above argument $ N_\pi$-times we see that $ f$ can be approximated by elements in $ M^Q\ast f$ modulo $(\ker\pi)^{{N_\pi}}$, hence modulo $j(\pi)$.
The same approximations on the right eventually show that $f\in \ol{M_Q\ast I \ast M_Q + j(\pi)}$. 
Now the  closed ideal $\ol{M_Q\ast j(\pi)\ast M_Q} $ is contained in $j(\pi) $ and its hull is reduced to $\{\pi\} $, 
since $Q $ is maximal. 
Hence $\ol{M_Q\ast j(\pi)\ast M_Q}=j(\pi) $ and thus we get that
$ f\in \ol{M_Q\ast I\ast M_Q}$. 
Since 
$$
\dual{\varphi}{M^Q\ast I\ast M^Q} =\{0\}
$$
it follows that $ \dual{\varphi}{f}=\{0\}$ for all $ f\in I$. 
We obtain thus  that $\varphi$ vanishes on $I$.  
\end{proof}

\begin{theorem}\label{idetiq}
Let $\xi\in \H_\pi^\infty $  and let  $Q=Q_\xi \in \Sc(G)$ be a maximal projection like in Remark~\ref{TheQ}. 
The mapping $ I\mapsto I_Q$ from the space $ \mathcal I^\pi$ of the primary twosided ideals $ I$ with hull $ \pi$ into the space $\I^Q $ of  twosided ideals of the algebra  $ \ker\pi_Q$ is a bijection.
 \end{theorem}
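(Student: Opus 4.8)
The plan is to prove the map is a bijection by writing down its inverse explicitly. That it is well defined is essentially Definition~\ref{Qgen}: by Proposition~\ref{exmax} the image $\bar Q$ of $Q$ in the quotient algebra $L^1(G)/j(\pi)$ is idempotent, so $\ker\pi_Q=\bar Q\ast\bigl(\ker\pi/j(\pi)\bigr)\ast\bar Q$ is an algebra, finite dimensional by Corollary~\ref{findim}; and since every $I\in\II^\pi$ satisfies $j(\pi)\subseteq I\subseteq\ker\pi$, the subspace $I_Q$ is indeed a two-sided ideal of $\ker\pi_Q$.

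For injectivity I would invoke Proposition~\ref{mqgene}, which recovers $I$ from the product $Q\ast I\ast Q$: one has $I=\overline{\opn{span}\{L^1(G)\ast Q\ast I\ast Q\ast L^1(G)\}}$. Let $\nu\colon L^1(G)\to L^1(G)/j(\pi)$ be the quotient map. Then $I_Q=\nu(Q\ast I\ast Q)$, so $I_Q$ determines the full preimage $\nu^{-1}(I_Q)=Q\ast I\ast Q+j(\pi)$, hence the closed two-sided ideal it generates in $L^1(G)$. Since $L^1(G)$ has a bounded approximate identity, $\overline{\opn{span}\{L^1(G)\ast j(\pi)\ast L^1(G)\}}=j(\pi)\subseteq I$, so that ideal equals $I$. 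Thus $I_Q=I'_Q$ forces $I=I'$.

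For surjectivity, given a two-sided ideal $\mathcal J$ of $\ker\pi_Q$, I would set $\tilde{\mathcal J}:=\{x\in\ker\pi\mid x+j(\pi)\in\mathcal J\}$ and let $I$ be the closed two-sided ideal of $L^1(G)$ generated by $\tilde{\mathcal J}$. Then $I\subseteq\ker\pi$ while $j(\pi)\subseteq\tilde{\mathcal J}\subseteq I$, so $\opn{hull}(I)=\{\pi\}$ and $I\in\II^\pi$; and $\mathcal J\subseteq I_Q$ follows by lifting each element of $\mathcal J$ to $\tilde{\mathcal J}\subseteq I$ and using that $\bar Q$ acts as the identity on $\ker\pi_Q$. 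For the reverse inclusion, $\nu(I)=I/j(\pi)$ is the closed ideal of $L^1(G)/j(\pi)$ generated by $\mathcal J$, so $I_Q=\bar Q\ast\nu(I)\ast\bar Q$ is contained in the closed span of $B\ast\mathcal J\ast B$, where $B:=\bar Q\ast\bigl(L^1(G)/j(\pi)\bigr)\ast\bar Q$ is the image of $Q\ast L^1(G)\ast Q$. The decisive point is that $B$ is a finite-dimensional \emph{unital} algebra with unit $\bar Q$, in which $\ker\pi_Q$ sits as a two-sided ideal with $B/\ker\pi_Q\cong\CC$ implemented by $f\mapsto\scalar{\pi(f)\xi}{\xi}$; since $\bar Q$ is the unit and $\ker\pi_Q\ast\mathcal J\subseteq\mathcal J$, this gives $B\ast\mathcal J\ast B=\mathcal J$. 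As $\mathcal J$ is finite dimensional all the closures are harmless, and $I_Q=\mathcal J$.

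The step I expect to be the main obstacle is the last one: identifying the corner $B=Q\ast L^1(G)\ast Q$ modulo $j(\pi)$ with the unitization of $\ker\pi_Q$. This is precisely what guarantees that two-sided $L^1(G)$-operations cannot enlarge an ideal of $\ker\pi_Q$, so that the crude construction $I=\langle\tilde{\mathcal J}\rangle$ already lands exactly on $\mathcal J$. The remaining ingredients --- that $\tilde{\mathcal J}$ and $j(\pi)$ lie in the ideals they generate, and that finite dimensionality of $\ker\pi_Q$ disposes of all closure issues --- are routine consequences of the bounded approximate identity of $L^1(G)$.
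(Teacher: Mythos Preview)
Your proof is correct and follows essentially the same route as the paper: injectivity via Proposition~\ref{mqgene}, and surjectivity by lifting an ideal $\mathcal J$ of $\ker\pi_Q$ to a set in $L^1(G)$ and taking the closed two-sided ideal it generates. The paper's proof of surjectivity is extremely terse (``since $K$ is an ideal, we clearly have $K=(I_K)_Q$''); your introduction of the corner algebra $B=\bar Q\ast(L^1(G)/j(\pi))\ast\bar Q$ and the identification $B/\ker\pi_Q\cong\CC$ is exactly the structural fact that makes this ``clearly'' work, and it is good that you spelled it out. Two minor differences worth noting: the paper takes $\tilde K$ inside $Q\ast L^1(G)\ast Q$ rather than inside $\ker\pi$, and for the hull of the generated ideal it invokes maximality of $Q$, whereas your larger lift $\tilde{\mathcal J}=\nu^{-1}(\mathcal J)$ already contains $j(\pi)$, so you get $\opn{hull}(I)=\{\pi\}$ from $j(\pi)\subseteq I\subseteq\ker\pi$ without appealing to maximality at that step.
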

 \begin{proof} Let $I$, $I'\in \II^\pi $ be  such that $Q\ast I/j(\pi) \ast Q= Q\ast I'/j(\pi)\ast Q $. 
Then by the proposition above $I=I'$. 
 
Let $K$ be a two-sided ideal in $\ker\pi_Q$. 
 Define the ideal $I_K $ by
 $$
 I_K:=\overline{\opn{span}\{L^1(G)\ast \tilde K\ast L^1(G)\}},
 $$
where 
$\tilde K:=\{f\in Q\ast L^1(G)\ast Q \mid  f\text{ mod } j(\pi)\in K \}$. 
Then $I_K $ is a closed twosided ideal in $L^1 (G) $. 
Furthermore the hull of this ideal is reduced to $\{\pi\} $, since $Q $ is maximal.
Also, since $K $ is an ideal,  we clearly have that $K= Q\ast (I_K/j(\pi)) \ast Q=(I_K)_Q$. 
\end{proof}

\begin{theorem}\label{sdense}
For every ideal $ I\in\II^\pi$, the subspace $ I\cap \S(G)$ is dense in $ I$.
 \end{theorem}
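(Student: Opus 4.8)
The plan is to reduce the density of $I\cap\Sc(G)$ in $I$ to the structural description of $I$ obtained in Proposition~\ref{mqgene}, combined with the fact that the Schwartz space is dense in $L^1(G)$ and stable under convolution. Fix $\xi\in\Hc_\pi^\infty$ of norm $1$ and a maximal projection $Q=Q_\xi$ as in Remark~\ref{TheQ}; note that $Q\in\Sc(G)$. By Proposition~\ref{mqgene} we have $I=\overline{\opn{span}\{L^1(G)\ast Q\ast I\ast Q\ast L^1(G)\}}$, so it suffices to approximate, in the $L^1$-norm, every element of the form $f\ast Q\ast a\ast Q\ast g$ with $f,g\in L^1(G)$ and $a\in I$ by elements of $I\cap\Sc(G)$. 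Since $\Sc(G)$ is dense in $L^1(G)$, choose $f_n,g_n\in\Sc(G)$ with $f_n\to f$, $g_n\to g$ in $L^1(G)$; then $f_n\ast Q\ast a\ast Q\ast g_n\to f\ast Q\ast a\ast Q\ast g$ in $L^1(G)$, because convolution by the fixed $L^1$-element $Q\ast a\ast Q$ is continuous on $L^1(G)$. Thus the problem is reduced to approximating $f_n\ast Q\ast a\ast Q\ast g_n$, and since $f_n\ast Q$ and $Q\ast g_n$ lie in $\Sc(G)$ (as $\Sc(G)$ is a convolution algebra and an $L^1$-module under convolution on both sides), it suffices to approximate elements $u\ast a\ast v$ with $u,v\in\Sc(G)$, $a\in I$, by elements of $I\cap\Sc(G)$.

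Next I would approximate the middle factor $a\in I$. Since $\Sc(G)$ is dense in $L^1(G)$, pick $a_n\in\Sc(G)$ with $a_n\to a$ in $L^1(G)$; then $u\ast a_n\ast v\to u\ast a\ast v$ in $L^1(G)$, and $u\ast a_n\ast v\in\Sc(G)$. The point that still needs care is that $u\ast a_n\ast v$ need not lie in $I$, since $a_n$ need not. Here one uses that $I$ is a closed ideal: although $a_n\notin I$, we have $a_n\to a\in I$, so $u\ast a_n\ast v\to u\ast a\ast v\in I$; since $I$ is closed, $u\ast a\ast v\in I$, but the approximants themselves are in $\Sc(G)$, not necessarily in $I\cap\Sc(G)$. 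To fix this, the cleaner route is to keep the middle factor exactly equal to $a\in I$ and observe that $u\ast a\ast v\in I\cap\Sc(G)$ already: it is in $I$ because $I$ is a two-sided ideal and $a\in I$, and it is in $\Sc(G)$ because $u,v\in\Sc(G)$ and $\Sc(G)$ absorbs convolution on both sides. Hence $u\ast a\ast v\in I\cap\Sc(G)$, and by the first paragraph the span of such elements is dense in $I$.

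Putting the pieces together: given $b\in I$ and $\varepsilon>0$, Proposition~\ref{mqgene} produces a finite sum $\sum_j f_j\ast Q\ast a_j\ast Q\ast g_j$ with $f_j,g_j\in L^1(G)$, $a_j\in I$, at $L^1$-distance $<\varepsilon/2$ from $b$; approximating each $f_j,g_j$ in $L^1$-norm by Schwartz functions $\tilde f_j,\tilde g_j$ and using continuity of convolution by the fixed $L^1$-elements $Q\ast a_j\ast Q$, one gets $\sum_j\tilde f_j\ast Q\ast a_j\ast Q\ast\tilde g_j$ within $\varepsilon/2$ of that sum, hence within $\varepsilon$ of $b$; and each summand $\tilde f_j\ast Q\ast a_j\ast Q\ast\tilde g_j$ lies in $I\cap\Sc(G)$ since $Q\in\Sc(G)$, $\tilde f_j,\tilde g_j\in\Sc(G)$, $\Sc(G)$ is a two-sided convolution module over $L^1(G)$ and over itself, and $I$ is a two-sided ideal containing $a_j$. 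This proves $I\cap\Sc(G)$ is dense in $I$. The main obstacle is purely bookkeeping: making sure at each replacement step that the approximant stays simultaneously in $\Sc(G)$ and in $I$, which is why one approximates only the outer $L^1$-factors and never the element $a_j$ that guarantees membership in $I$; once that is arranged, everything else is continuity of convolution and density of $\Sc(G)$ in $L^1(G)$.
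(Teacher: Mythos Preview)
Your argument has a genuine gap: the claim that $\Sc(G)$ is a two-sided convolution module over $L^1(G)$, i.e.\ that $\Sc(G)\ast L^1(G)\subseteq\Sc(G)$ or $\Sc(G)\ast L^1(G)\ast\Sc(G)\subseteq\Sc(G)$, is false. Already on $G=\RR$ take $u=v$ a Gaussian and $a(s)=(1+s^2)^{-1}\in L^1(\RR)$; then $(u\ast a\ast v)(x)$ decays only like $(1+x^2)^{-1}$ and is not in $\Sc(\RR)$. More generally, for $u,v\in\Sc(\RR^n)$ and $a\in L^1(\RR^n)$ one expands $x^\beta(u\ast a\ast v)(x)$ and is forced to control $\int s^{\beta'}a(s)\,ds$, which $a\in L^1$ does not provide. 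Thus your summands $\tilde f_j\ast Q\ast a_j\ast Q\ast \tilde g_j$ need not lie in $\Sc(G)$, and the proof breaks down exactly at the step you singled out as ``purely bookkeeping''.

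The paper's proof handles precisely this point by \emph{also} approximating the middle factor, but doing so inside $I$. The key input is Corollary~\ref{findim}/Theorem~\ref{idetiq}: the algebra $\ker\pi_Q=Q\ast(\ker\pi/j(\pi))\ast Q$ is finite dimensional, hence equals $Q\ast\big((\ker\pi\cap\Sc(G))/(j(\pi)\cap\Sc(G))\big)\ast Q$. Writing $I=I_K$ for $K=I_Q$, one shows that every $f\in\tilde K=\{h\in Q\ast L^1(G)\ast Q\mid h\bmod j(\pi)\in K\}$ can be approximated in $L^1$ by elements of $\tilde K\cap\Sc(G)$: pick a Schwartz representative of $f\bmod j(\pi)$ using finite-dimensionality, then use that $j(\pi)\cap\Sc(G)$ is dense in $j(\pi)$. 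With the middle factor already in $\Sc(G)\cap I$, the outer $L^1$-factors can be replaced by Schwartz functions as you do, and now $\Sc(G)\ast\big(\tilde K\cap\Sc(G)\big)\ast\Sc(G)\subset\Sc(G)\cap I$ because $\Sc(G)$ is a convolution \emph{algebra} (no $L^1$-module property needed). So the missing idea is exactly the use of finite-dimensionality of $\ker\pi_Q$ to get Schwartz representatives of $Q\ast I\ast Q$ modulo $j(\pi)$.
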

 \begin{proof} 
 Choose $Q=Q_\xi \in\S(G)$ as above. 
 Then there is an ideal $K$ in $(\ker\pi)_Q$ such that $I=I_K$ as in the proof of Theorem~\ref{idetiq}. 
We also have that $\ker\pi \cap \Sc(G)$ is dense in $\ker\pi$, and $j(\pi) \cap \Sc(G)$ is dense in $j(\pi)$ (see \cite{Lu83b}).  
Then, since  $\ker\pi_Q$ is finite dimensional, we have  $\ker\pi_Q=(\ker\pi)\cap \S(G))_Q $. 
Therefore, denoting by $\tilde K $ the subspace of $Q\ast L^1(G\ast Q $ defined by
\begin{eqnarray*}
 \tilde K:=\{f\in Q\ast L^1(G)\ast Q\mid f\text{ mod }j(\pi)\in K\}
 \end{eqnarray*}
 we see that every $f\in\tilde K $ can be approximated by functions contained in $\tilde K\cap \S(G) $. Hence  every element in  $I=\ol{L^1(G)\ast \tilde K\ast L^1(G)} $ can also be approximated by  functions contained in $\S(G)\ast (\tilde K\cap \S(G))\ast \S(G) $.
\end{proof}

\begin{definition}\label{defpI}
\normalfont  Let $I\in\II^\pi $, $\xi\in \H^\infty_\pi $ and $Q=Q_\xi$ as before. 
 Let 
$$
 \PP_Q^I:=\{p\in\PP_0\mid  \text{there exists }\psi\in I^\perp\text{ such that }\check Q\ast \ps\ast  \check Q=\check Q\ast pc_{\xi,\xi}\ast  \check Q\text{ on } \S(G)\}.
 $$
  Let also
 $$
 \PP^\pi_Q=\PP^{j(\pi)}_Q.
 $$
 \end{definition}
 \begin{remark}\label{can be n}
\normalfont 
There may be polynomials $p\in\PP_0 $, such that
$$
 \check Q\ast pc_{\xi,\xi}\ast \check Q=0 
 $$
 as functional on $\S(G)\cap \ker\pi $.  
Therefore we always consider only subspaces $W$ of $\PP_0 $ containing the space 
$$
 \PP_Q^0:=\{p\in\PP_0 \mid  \check Q\ast pc_{\xi,\xi}\ast \check Q= 0\text{ on }\S(G)\cap\ker\pi\}.
$$
It follows then by (\ref{vaniv:1}) that for $p\in\PP_Q^0$ we have that
\begin{eqnarray}\label{ppooisdiff}
 {\check Q\ast pc_{\xi,\xi}\ast \check Q}=\langle{pc_{\xi,\xi}},{Q}\rangle c_{\xi,\xi}.
 \end{eqnarray}
 \end{remark}
 
 \begin{proposition}\label{pqideter}
For every $I\in\II^\pi $ and $\xi\in \H_\pi^\iy, Q=Q_\xi $, we have that
\begin{eqnarray*}
 \Pc_{Q}^I=\{p\in\PP_0\vert \check Q\ast pc_{\xi,\xi}\ast\check Q= 0 \; \text{on} \;  I\cap\Sc(G)\}.
 \end{eqnarray*}
In particular, $\Pc_Q^0 =\Pc_Q^{\opn{ker}(\pi)}$. 
 \end{proposition}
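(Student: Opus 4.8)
The plan is to prove the two inclusions separately. For the inclusion $\Pc_Q^I \subseteq \{p\in\PP_0 \mid \check Q\ast pc_{\xi,\xi}\ast\check Q = 0 \text{ on } I\cap\Sc(G)\}$, let $p\in\Pc_Q^I$, so there is $\psi\in I^\perp$ with $\check Q\ast\psi\ast\check Q = \check Q\ast pc_{\xi,\xi}\ast\check Q$ as functionals on $\S(G)$. For $f\in I\cap\Sc(G)$ one has $Q\ast Q\ast f\ast Q\ast Q \in I$ as well (since $I$ is a two-sided ideal), and because $\psi$ annihilates $I$, the pairing $\dual{\psi}{Q\ast Q\ast f\ast Q\ast Q}$ vanishes; rewriting this pairing using the convolution identities of Section~\ref{polmulcon} shows $\dual{\check Q\ast\psi\ast\check Q}{f}=0$, hence $\dual{\check Q\ast pc_{\xi,\xi}\ast\check Q}{f}=0$. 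Here I must be careful about which side the checks and the $Q$'s sit on; the identity $\dual{\check Q \ast \psi \ast \check Q}{f} = \dual{\psi}{Q\ast f\ast Q}$ (up to the precise placement dictated by the definitions of $\ast$ and $\check{}$) is the routine bookkeeping step, and one invokes \eqref{Qxi} to absorb the extra copies of $Q$ modulo $J_{\pi,0}\subseteq j(\pi)\subseteq I$.

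For the reverse inclusion, suppose $p\in\PP_0$ satisfies $\check Q\ast pc_{\xi,\xi}\ast\check Q = 0$ on $I\cap\Sc(G)$. By Theorem~\ref{sdense}, $I\cap\Sc(G)$ is dense in $I$, and by Theorem~\ref{expjpi} the functional $\check Q\ast pc_{\xi,\xi}\ast\check Q$ is of the form $\check Q\ast\psi_0\ast\check Q$ for a suitable $\psi_0$; more directly, I want to produce $\psi\in I^\perp$ with $\check Q\ast\psi\ast\check Q = \check Q\ast pc_{\xi,\xi}\ast\check Q$ on all of $\S(G)$. The candidate is simply $\psi := $ (a representative of) $Q\ast pc_{\xi,\xi}\ast Q$ regarded as an element of $(\ker\pi)^\perp$-type space: the hypothesis says $\psi$ kills $I\cap\Sc(G)$, density upgrades this to $\psi\in I^\perp$, and then applying the maximal-projection relations \eqref{Qxi} twice more ($\check Q\ast\check Q = \check Q$ mod $J_{\pi,0}$) gives $\check Q\ast\psi\ast\check Q = \check Q\ast Q\ast pc_{\xi,\xi}\ast Q\ast\check Q = \check Q\ast pc_{\xi,\xi}\ast\check Q$ modulo something annihilating $\S(G)$-level pairings, whence $p\in\Pc_Q^I$.

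The last sentence, $\Pc_Q^0 = \Pc_Q^{\ker\pi}$, is then immediate: take $I = \ker\pi$ in the just-proved characterization, so $\Pc_Q^{\ker\pi} = \{p\in\PP_0 \mid \check Q\ast pc_{\xi,\xi}\ast\check Q = 0 \text{ on } \ker\pi\cap\Sc(G)\}$, which is exactly the definition of $\Pc_Q^0$ in Remark~\ref{can be n}. The main obstacle I expect is purely notational rather than conceptual: keeping straight, across the identities \eqref{cpmulfg}, \eqref{fconco} and \eqref{Qxi}, exactly where the involution $\check{}$ lands and verifying that the ``error terms'' produced when replacing $Q\ast Q$ by $Q$ genuinely lie in $J_{\pi,0}$ (hence pair to zero against $f\in I$, since $J_{\pi,0}\subseteq j(\pi)\subseteq I$) — once that is organized cleanly, both inclusions are one-line consequences of Theorem~\ref{sdense} and the projection identity.
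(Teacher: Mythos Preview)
Your argument for the inclusion $\Pc_Q^I \subseteq \{p\in\PP_0 \mid \check Q\ast pc_{\xi,\xi}\ast\check Q = 0 \text{ on } I\cap\Sc(G)\}$ and for the final identification $\Pc_Q^0 = \Pc_Q^{\ker\pi}$ is fine, and matches the paper's one-line ``by definition''.

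The reverse inclusion has a genuine gap. Your candidate $\psi := \check Q\ast pc_{\xi,\xi}\ast\check Q$ is a priori only a tempered distribution (by \eqref{qastcexp} it lies in $\PP_0\Cc_\pi$, hence in $L^\infty(G,\omega)$ for a polynomial weight $\omega$, but in general \emph{not} in $L^\infty(G)$). Thus the density of $I\cap\Sc(G)$ in $I$ with respect to the $L^1$-norm does not let you conclude that $\psi$ annihilates all of $I$: $\psi$ need not be $L^1$-continuous. Consequently you cannot claim $\psi\in I^\perp$ in the sense required by Definition~\ref{defpI}, and the argument collapses.

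The paper handles this differently. From the vanishing of $\check Q\ast pc_{\xi,\xi}\ast\check Q$ on $I\cap\Sc(G)$ it first passes to the \emph{finite-dimensional} quotient $Q\ast(\ker\pi/j(\pi))\ast Q$ (Corollary~\ref{findim}); density (Theorem~\ref{sdense}) then identifies $Q\ast(I/j(\pi))\ast Q$ with $Q\ast((I\cap\Sc(G))/(j(\pi)\cap\Sc(G)))\ast Q$, which produces a genuine $\psi\in I^\perp\subset L^\infty(G)$ with $\check Q\ast\psi\ast\check Q = \check Q\ast pc_{\xi,\xi}\ast\check Q$ on $\ker\pi\cap\Sc(G)$. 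This equality is only on $\ker\pi\cap\Sc(G)$, not on all of $\Sc(G)$, so a further step is needed: apply Theorem~\ref{expjpi} to write $\check Q\ast\psi\ast\check Q = \check Q\ast p_\psi c_{\xi,\xi}\ast\check Q$ on $\Sc(G)$, observe that $p-p_\psi\in\PP_Q^0$, and use the explicit identity \eqref{ppooisdiff} to see that the discrepancy is a scalar multiple of $c_{\xi,\xi}\in I^\perp$. Replacing $\psi$ by $\psi + \langle pc_{\xi,\xi},Q\rangle c_{\xi,\xi}$ then gives the required witness in $I^\perp$. Your ``projection identity'' shortcut misses both the boundedness issue and this correction step.
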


\begin{proof} 
By definition we have that 
$\PP_Q^I\subset \{p\in\PP_0\vert \check Q\ast pc_{\xi,\xi}\ast\check Q \; \text{on} \; I\cap\Sc(G) \} $. 
Let now $p\in\PP_0 $, such that $\check Q\ast pc_{\xi,\xi}\ast\check Q\in \S'(G)$ vanishes on $I\cap\Sc(G)$. 
Since $\S(G)\cap I$ is dense in $I $ by Theorem~\ref{sdense}, it follows that the finite dimensional spaces $Q\ast(I/j(\pi))\ast Q $ and $Q\ast (I\cap\S(G))/j(\pi)\cap\S(G) )\ast Q $ are isomorphic. 
Therefore there exists $\psi\in I^\perp $, such that 
$\check Q\ast \ps\ast\check Q=  \check Q \ast \check Q\ast pc_{\xi,\xi}\ast\check Q \ast \check Q= \check Q\ast pc_{\xi,\xi}\ast\check Q $ on $\ker\pi\cap \S(G) $. 
By Proposition \ref{expjpi}, there exist $p_\ps\in \PP_0 $ such that $\check Q\ast \ps\ast\check Q= \check Q\ast p_\ps  c_{\xi,\xi}\ast\check Q$ on $\S(G) $. 
Hence 
\begin{eqnarray*}
 p-p_\ps\in \PP_0^0,
 \end{eqnarray*}
therefore (by \ref{ppooisdiff}) $ \check Q\ast (\ps-\ps_p)c_{\xi,\xi}\ast\check Q=\langle{(p-p_\ps) c_{\xi,\xi}},{Q}\rangle c_{\xi,\xi}$, and thus
\begin{eqnarray*}
 \check Q\ast pc_{\xi,\xi}\ast\check Q=\check Q\ast (\ps+\langle{pc_{\xi,\xi}},{Q}\rangle   c_{\xi,\xi})\ast\check Q.
 \end{eqnarray*}
It follows that  $p\in\PP_Q^I $.
\end{proof}

\begin{definition}\label{invariantdef}
For $x$, $y\in G $ the function $\lambda(x)\rho(y) (\check Q\ast p c_{\xi,\xi}\ast  \check Q )$ is contained in $I^\perp $ whenever $p\in \PP^I_Q$. 
Thus  there exists $_xp_y\in\PP^I_Q $, unique modulo $\PP_Q^0$, such that
 $$ 
 \check Q\ast (_xp_y) c_{\xi,\xi}\ast \check Q=\check Q\ast (\lambda(x)\rho(y)(\check Q\ast pc_{\xi,\xi}\ast \check Q))\ast \check Q
 $$
on $\ker\pi\cap\S(G) $. 
We say that a subspace $W $ of $\PP^\pi_Q $ is \textit{invariant} if for every $p\in W $ the polynomials $_xp_y $ are also in $W $ for every $x$, $y\in G $. 
\end{definition}

 \begin{theorem}\label{deteroIp}
Let $\xi\in \H_\pi^\infty $ and  choose a maximal projection  $Q=Q_\xi $ as before. 
Then
there exists an order reversing bijection between the space $\II^\pi $ and the space $\I^\pi_Q $ of 
invariant subspaces of $\PP^\pi_Q $ containing $\PP_Q^0 $.
\end{theorem}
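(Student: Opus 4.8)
The plan is to show that $I\mapsto\PP_Q^I$ is the required order reversing bijection, obtaining it as the composition of the bijection of Theorem~\ref{idetiq} with a linear duality on the finite dimensional algebra $\ker\pi_Q$. First I would check that $\PP_Q^I\in\I^\pi_Q$. Since $j(\pi)\subseteq I\subseteq\ker\pi$, Proposition~\ref{pqideter} gives $\PP_Q^0=\PP_Q^{\ker\pi}\subseteq\PP_Q^I\subseteq\PP_Q^{j(\pi)}=\PP^\pi_Q$, and the same proposition makes the order reversing property immediate: if $I\subseteq I'$ then vanishing on $I'\cap\Sc(G)$ is the stronger condition, so $\PP_Q^{I'}\subseteq\PP_Q^I$. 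Invariance of $\PP_Q^I$ in the sense of Definition~\ref{invariantdef} follows from the fact that a closed twosided ideal of $L^1(G)$ is invariant under left and right translations (approximate $\delta_x$ from inside $I$ by an approximate identity, exactly as in the density arguments above), so that $I^\perp$ is translation invariant; hence $\lambda(x)\rho(y)(\check Q\ast pc_{\xi,\xi}\ast\check Q)\in I^\perp$ whenever $p\in\PP_Q^I$, which says precisely that ${}_xp_y\in\PP_Q^I$.

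Next I would introduce the duality. For $p\in\PP^\pi_Q$ let $\phi_p$ be the linear functional on $\ker\pi_Q$ given by $\phi_p(Q\ast f\ast Q\ \mathrm{mod}\ j(\pi))=\dual{\check Q\ast pc_{\xi,\xi}\ast\check Q}{f}$, $f\in\ker\pi\cap\Sc(G)$; this is well defined because $\check Q\ast pc_{\xi,\xi}\ast\check Q$ kills $j(\pi)$ for $p\in\PP^\pi_Q$, because $Q\ast Q-Q\in J_{\pi,0}\subseteq j(\pi)$, and because $\ker\pi\cap\Sc(G)$ is dense in $\ker\pi$ (Theorem~\ref{sdense}). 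By Theorem~\ref{expjpi} each $\psi\in j(\pi)^\perp$ satisfies $\check Q\ast\psi\ast\check Q=\check Q\ast pc_{\xi,\xi}\ast\check Q$ for some $p\in\PP^\pi_Q$, and since every functional on $\ker\pi_Q$ extends (Hahn--Banach) to an element of $j(\pi)^\perp$, the map $p\mapsto\phi_p$ sends $\PP^\pi_Q$ onto $(\ker\pi_Q)^*$; its kernel is $\PP_Q^0$ by definition. Moreover, combining Proposition~\ref{pqideter} with Theorem~\ref{sdense} one sees that $p\in\PP_Q^I$ exactly when $\phi_p$ vanishes on $I_Q=Q\ast(I/j(\pi))\ast Q$, that is, $p\mapsto\phi_p$ carries $\PP_Q^I$ onto the annihilator $(I_Q)^\perp$ in $(\ker\pi_Q)^*$. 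In particular $\PP_Q^I$ determines $I_Q$, hence $I$ by Theorem~\ref{idetiq}, so $I\mapsto\PP_Q^I$ is injective.

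For surjectivity onto $\I^\pi_Q$, given an invariant subspace $W$ with $\PP_Q^0\subseteq W\subseteq\PP^\pi_Q$ I would set $K:=\bigcap_{p\in W}\ker{\phi_p}\subseteq\ker\pi_Q$ and show $K$ is a twosided ideal. Here I use that $Q\ \mathrm{mod}\ j(\pi)$ is a relative unit, so that $\ker\pi_Q$ is an ideal in the finite dimensional unital algebra $A_Q:=Q\ast L^1(G)\ast Q/j(\pi)$ and a subspace $K\subseteq\ker\pi_Q$ is a twosided ideal of $\ker\pi_Q$ iff $A_Q\ast K\ast A_Q\subseteq K$; dually this means $W$ is stable under the operators on $(\ker\pi_Q)^*$ transposed to $a\mapsto\alpha\ast a\ast\beta$, $\alpha,\beta\in A_Q$. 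These transposed operators are identified with the operations $p\mapsto{}_xp_y$ of Definition~\ref{invariantdef}: translating $\check Q\ast pc_{\xi,\xi}\ast\check Q$ by $\lambda(x)\rho(y)$ and resandwiching by $\check Q\ast(\cdot)\ast\check Q$ amounts to replacing the outer factors $Q$ by $\lambda(x)Q$ and $\rho(y)Q$, i.e. to multiplication by the classes of $Q\ast\lambda(x)Q$ and $\rho(y)Q\ast Q$ in $A_Q$; and by Proposition~\ref{mqdense} the span $M^Q$ is dense in $L^1(G)$ modulo $\ker\pi$, so that $Q\ast M^Q\ast Q$ spans the finite dimensional space $A_Q$ and these classes exhaust it. Hence $W$ is invariant iff $K$ is a twosided ideal; Theorem~\ref{idetiq} then supplies $I\in\II^\pi$ with $I_Q=K$, and by the preceding paragraph $\PP_Q^I=\{p\in\PP^\pi_Q\mid\phi_p|_K=0\}$ is the preimage of $K^\perp=\opn{span}\{\phi_p\mid p\in W\}$, which equals $W+\PP_Q^0=W$.

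The step I expect to be the real obstacle is this last identification: carrying out, inside the finite dimensional algebra $A_Q$, the bookkeeping that turns the geometric action $\lambda(x)\rho(y)$ (after the sandwiching by $\check Q$) into honest two-sided multiplications — the computation rests on $Q\ast Q\equiv Q$ modulo $j(\pi)$ together with formula~\eqref{eeqppsq} — and then invoking Proposition~\ref{mqdense} to be certain that these multiplications exhaust $A_Q\ast K\ast A_Q$, so that invariance of $W$ is genuinely equivalent to $K$ being a twosided ideal and not merely stable under a smaller family of operators.
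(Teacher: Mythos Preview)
Your proposal is correct and follows the same route as the paper: compose the bijection of Theorem~\ref{idetiq} with a duality on the finite-dimensional algebra $\ker\pi_Q$, so that twosided ideals correspond to invariant subspaces of $\PP^\pi_Q$ containing $\PP_Q^0$. The paper's own proof is two sentences --- it simply says ``use Theorem~\ref{idetiq}; orthogonals of ideals are invariant subspaces and conversely'' --- so what you have written is a fully fleshed-out version of exactly that idea, with the pairing $p\mapsto\phi_p$ made explicit and the annihilator correspondence spelled out via Proposition~\ref{pqideter} and Theorem~\ref{sdense}.

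One remark on the step you flag as the obstacle. You invoke Proposition~\ref{mqdense} (density of $M^Q$ modulo $\ker\pi$) to conclude that the classes $Q\ast\delta_x\ast Q$ span $A_Q$, but that proposition gives density only modulo $\ker\pi$, not modulo $j(\pi)$. The conclusion you want is nonetheless true, and for a more elementary reason: $A_Q$ is finite-dimensional, so if the images of the $Q\ast\delta_x\ast Q$ failed to span it there would be a nonzero functional $\psi\in A_Q^*$ with $\psi(Q\ast\delta_x\ast Q)=0$ for all $x\in G$; integrating against any $f\in L^1(G)$ then gives $\psi(Q\ast f\ast Q)=0$, hence $\psi=0$. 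With this in hand, the identification of the action $p\mapsto{}_xp_y$ with the transpose of left/right multiplication in $A_Q$ goes through exactly as you describe, and invariance of $W$ is equivalent to its annihilator being a twosided ideal.
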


\begin{proof} 
It suffices to use Theorem~\ref{idetiq}:  
The orthogonal of every ideal $K$ contained in $\ker\pi_Q $ is 
an invariant subspace and every invariant subspace in  $\ker\pi_Q $ 
defines an ideal in $\ker\pi_Q/j(\pi)_Q $.
\end{proof}

\begin{definition}\label{wpidef}
\normalfont   
Consider the space of polynomials 
$$
 W_\pi:=V_\pi\cap \PP_0.
$$
Then $W_\pi $ is  a translation invariant subspace and for every smooth  coefficient $\varphi=c_{\et,\de} $ we have that 
$$
 W_\pi \varphi\subset j(\pi)^\perp.
$$
\end{definition}

\begin{definition}\normalfont
For  any translation invariant subspace $W\subset W_\pi $ let
 $$
 I^W:=\{f\in L^1(G) \mid  \dual{W(\ker\pi^\perp)}{f}=0\}.
$$
Then $j (\pi) \subset I^W$ is a closed twosided ideal of $L^1(G)$ contained in $\II^\pi $. 

For $I\in\II^\pi $ consider 
\begin{eqnarray*}
W_I:=\{p\in W_\pi\mid  pc_{\de, \et}\in I^\perp\text{ for all }\de,\et\in\H_\pi^\iy\}.
 \end{eqnarray*}
 Then $W_I $ is a translation invariant subspace of $W_\pi $. 
 \end{definition}

\begin{theorem}\label{wiseqpqinv}
Let $I\in\II^\pi $. 
The there exists a translation invariant subspace $W \subset W_\pi$ such that $I=I^W $ if and only for some $\xi\in \H_\pi^\iy $ and a maximal $Q=Q_\xi\in \S(G) $ we have that the space of polynomials $\PP_Q^I $ is translation invariant modulo $\PP_Q^0 $, that is, there exists a translation-invariant subspace $W_Q\subset W_\pi  $ 
such that $\PP_Q^I=W_Q+\PP_Q^0 $. 
\end{theorem}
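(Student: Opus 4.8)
The plan is to reduce both implications to the single claim that for every translation-invariant subspace $W\subseteq W_\pi$ and every fixed $\xi\in\H_\pi^\iy$ with a maximal projection $Q=Q_\xi$ as in Remark~\ref{TheQ}, one has $\PP_Q^{I^W}=W+\PP_Q^0$. Granting this, the forward implication is immediate: if $I=I^W$ with $W\subseteq W_\pi$ translation invariant, then $\PP_Q^I=\PP_Q^{I^W}=W+\PP_Q^0$ for every such $Q$, so $W_Q:=W$ does the job. For the converse, suppose $\PP_Q^I=W_Q+\PP_Q^0$ for some $\xi$, some maximal $Q=Q_\xi$, and some translation-invariant $W_Q\subseteq W_\pi$; then $I^{W_Q}\in\II^\pi$ by the definition following Definition~\ref{wpidef}, and the claim gives $\PP_Q^{I^{W_Q}}=W_Q+\PP_Q^0=\PP_Q^I$. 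Since the correspondence $I'\mapsto\PP_Q^{I'}$ is a bijection of $\II^\pi$ onto $\I^\pi_Q$ (Theorems~\ref{idetiq}, \ref{deteroIp} and Proposition~\ref{pqideter}), hence injective, this forces $I=I^{W_Q}$. So everything reduces to the claim.

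For the easy inclusion $W+\PP_Q^0\subseteq\PP_Q^{I^W}$: since $I^W\subseteq\ker\pi$, the characterization in Proposition~\ref{pqideter} gives $\PP_Q^0=\PP_Q^{\opn{ker}(\pi)}\subseteq\PP_Q^{I^W}$; and for $p\in W$ the function $pc_{\xi,\xi}$ lies in $W(\ker\pi^\perp)$, hence in $(I^W)^\perp$ by the very definition of $I^W$, so taking $\psi:=pc_{\xi,\xi}$ in the definition of $\PP_Q^{I^W}$ shows $p\in\PP_Q^{I^W}$.

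The substantial part, and the step I expect to be the main obstacle, is the reverse inclusion $\PP_Q^{I^W}\subseteq W+\PP_Q^0$. Given $p\in\PP_Q^{I^W}$, pick $\psi\in(I^W)^\perp$ with $\check Q\ast\psi\ast\check Q=\check Q\ast pc_{\xi,\xi}\ast\check Q$ on $\S(G)$. Arguing as in the proof of Theorem~\ref{expjpi}, fix a smooth symmetric polynomial weight $\omega$ bounding $\PP_0$; then $\psi\in L^\infty(G,\omega)$, and since $I^W$ is by definition the annihilator in $L^1(G)$ of $W(\ker\pi^\perp)$, the bipolar theorem places $\psi$ in the weak$^*$ closure of $\opn{span}\{q\varphi\mid q\in W,\ \varphi\in\ker\pi^\perp\}$. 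Fixing a basis $q_1,\dots,q_r$ of the finite-dimensional space $W$, write $\psi$ as the weak$^*$ limit of sums $g_n:=\sum_{j=1}^r q_j\varphi_j^{(n)}$ with $\varphi_j^{(n)}\in\ker\pi^\perp$. As each $q_j\in\PP_0$, formula (\ref{pinguin}) applies and yields $\check Q\ast g_n\ast\check Q=\sum_{j=1}^r\check Q\ast\widetilde{q}_{j}^{(n)}c_{\xi,\xi}\ast\check Q$ with $\widetilde{q}_{j}^{(n)}\in\Vc_{q_j}$, where — and this is the one place the translation invariance of $W$ enters — $\Vc_{q_j}\subseteq W$. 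Hence $\check Q\ast g_n\ast\check Q$ lies in $\{\check Q\ast qc_{\xi,\xi}\ast\check Q\mid q\in W\}$, a finite-dimensional and therefore weak$^*$ closed subspace; since two-sided convolution by $\check Q$ is weak$^*$ continuous, $\check Q\ast pc_{\xi,\xi}\ast\check Q=\check Q\ast\psi\ast\check Q=\lim_n\check Q\ast g_n\ast\check Q$ belongs to it, so $\check Q\ast pc_{\xi,\xi}\ast\check Q=\check Q\ast qc_{\xi,\xi}\ast\check Q$ on $\S(G)$ for some $q\in W$, whence $p-q\in\PP_Q^0$ and $p\in W+\PP_Q^0$. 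This proves the claim, and with it the theorem.

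The delicate points in this last argument are the identification of $(I^W)^\perp$ inside $L^\infty(G,\omega)$ with the weak$^*$ closed span of $W(\ker\pi^\perp)$ — so that $\psi$ may genuinely be approximated by such finite sums — and the commuting of the weak$^*$ limit through the two-sided convolution by $\check Q$ so as to land inside the finite-dimensional space $\check Q\ast(Wc_{\xi,\xi})\ast\check Q$. Both rest on the mechanism already present in the proof of Theorem~\ref{expjpi}; the one genuinely new ingredient is the bookkeeping $\Vc_{q_j}\subseteq W$, available precisely because $W$ is translation invariant, which is what pins the limiting functional down to $W+\PP_Q^0$ rather than merely to $\PP_Q^\pi$.
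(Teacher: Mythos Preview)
Your proof is correct, and for the forward implication it coincides with the paper's: both establish the identity $\PP_Q^{I^W}=W+\PP_Q^0$ via the easy inclusion plus the weak$^*$ approximation of $(I^W)^\perp$ by $W\Cc_\pi$ and formula (\ref{pinguin}), the translation invariance of $W$ being used exactly where you say, to ensure $\Vc_{q_j}\subseteq W$.

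For the converse the two proofs diverge. The paper argues directly, showing separately that $I^\perp\subseteq(I^{W_Q})^\perp$ (any $\psi\in I^\perp$ yields $p\in W_Q$ with $\check Q\ast\psi\ast\check Q=\check Q\ast pc_{\xi,\xi}\ast\check Q$, so $\psi$ kills $Q\ast I^{W_Q}\ast Q$ and hence $I^{W_Q}$ by Proposition~\ref{mqgene}) and that $(I^{W_Q})^\perp\subseteq I^\perp$ (for $p\in W_Q$ one runs through a Jordan--H\"older basis of $\Vc_p$ to get each $p_j c_{\xi,\xi}\in I^\perp$, then passes to general coefficients $pc_{\eta,\delta}$). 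You instead reapply the forward computation to $I^{W_Q}$, obtaining $\PP_Q^{I^{W_Q}}=W_Q+\PP_Q^0=\PP_Q^I$, and conclude $I=I^{W_Q}$ from the injectivity of $I'\mapsto\PP_Q^{I'}$ furnished by Theorem~\ref{deteroIp}. Your route is shorter and avoids the Jordan--H\"older bookkeeping of the paper's second inclusion, at the cost of relying on the bijection of Theorem~\ref{deteroIp}; the paper's argument is more self-contained but repeats machinery.
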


\begin{proof}
Let $W\subset W_\pi $ be a translation invariant subspace and take $\xi\in \H_\pi^\infty$, $Q=Q_\xi $
as before. 
Let $p\in W $.
Then  $pc_{\xi,\xi}\in (I^W)^\perp $ and therefore  also
$$
 \check Q\ast pc_{\xi,\xi}\ast \check Q\in (I^W)^\perp. 
$$
There exists then $p_1\in \PP_Q^{I^W} $ such that
\begin{eqnarray*}
 \check  Q\ast pc_{\xi,\xi}\ast \check Q=\check Q\ast p_1c_{\xi,\xi}\ast \check  Q.
 \end{eqnarray*}
This means that $p-p_1\in \PP_Q^0 $, i.e.,  $p\in\PP_Q^{I^W} $. 
Therefore $W+\PP_Q^0\subset \PP_Q^{I^W} $.

For every $\ps\in (I^W)^\perp $, the function $\check Q\ast \ps\ast \check Q  $ is contained in $\check Q\ast Wc_{\xi,\xi}\ast \check Q $,
since $ (I^W)^\perp$ is the weak$^* $-limit of the span of the functions $p'c_{\eta,\xi} $, with $p'\in W$, $\eta$, $\delta\in \H^\infty_\pi$,  and since $\check Q\ast p'c_{\et,\de}\ast \check Q\in \check Q\ast Wc_{\xi,\xi}\ast \check Q $, due to the fact that  $W $ is translation invariant. 

Let  now $p\in \PP_Q^{I^W} $. 
Then  there exists $\ps\in (I^W)^\perp $ such that $\check Q\ast \ps\ast \check Q=\check Q\ast pc_{\xi,\xi}\ast \check  Q\in \check Q\ast Wc_{\xi,\xi}\ast \check Q$. 
Hence our $p\in\PP_Q^{I^W} $ is contained in $W +\PP_Q^0$. 
Therefore $\PP_Q^{I^W}=W $ modulo $\PP_Q^0 $ is translation invariant modulo $\PP_Q^0 $. 

Assume now that $\PP_Q^I $ is translation invariant modulo $\PP_Q^0 $.
For any $\psi\in I^\perp$, there is $p\in \Pc_Q^I$ such that 
\begin{equation}\label{wiseqpqinv-1}
\check Q\ast pc_{\xi, \xi} \ast \check Q = \check Q\ast \psi \ast \check Q.
\end{equation}
Since $\Pc_{Q}^I = W_Q + P_Q^0$, we can take $p\in W^Q$ in \eqref{wiseqpqinv-1}. 
It follows that $\psi= 0$ on $ Q\ast I^{W_Q}\ast Q$. 
Since $\alpha\ast \psi \ast \beta\in I^\perp $ for every $\alpha, \beta\in L^1(G)$ we actually get that
$\psi = 0$ on $L^1(G) \ast Q \ast I^{W_Q}\ast Q \ast L^1(G)$, hence on $I^{W_Q}$. 
Hence $I^\perp \subseteq (I^{W_Q})^\perp$. 

On the other hand, if $p\in W_Q$, then  all its derivatives  belong also  to $\PP_Q^I $ modulo $\PP_Q^0 $. 
In particular if 
$\{p=p_m,\dots, p_1=1\}$ 
is  a Jordan-H\"older basis for the $G\times G $-module $W_Q\subset \PP_Q^I $ then all the $p_i $'s are contained in $\PP_Q^I $ modulo $\PP_Q^0 $.
Hence for all $\alpha, \beta\in \Sc(G)$  there is $\psi_{\alpha, \beta}\in I^\perp$ such that
$$ \check Q\ast \alpha \ast pc_{\xi, \xi} \ast \beta\ast  \check Q = \check Q\ast \psi_{\alpha, \beta} \ast \check Q. $$ 
Therefore $pc_{\xi, \xi} = 0$ on $\Sc(G)\ast Q\ast I^\perp\ast Q\ast \Sc(G)$, and thus $pc_{\xi, \xi}\in I^\perp$. 
Similarly we have that $p_j c_{\xi,\xi}\in I^\perp$ for every  $j=1, \dots, m-1$. 
It follows that $p \lambda(x)\rho(y)c_{\xi, \xi}\in I^\perp$ for every $x, y\in G$, and since $ p\in W_Q\subseteq W_\pi$, we get that
$pc_{\eta, \delta}\in I^\perp$ for all $\eta, \delta\in \Hc^\infty_\pi$.
Thus we  obtain that 
$(I^{W_Q})^\perp \subseteq I^\perp$. 
This finishes the proof of the theorem. 
\end{proof}

\begin{remark}
 \normalfont
We can choose  the space $\Pc_0\subseteq \Pc(G)$ such that $I^{W_\pi} = K_\pi$.

Indeed, recall that $W_\pi=V_\pi\cap \PP_0 $, $
 K_\pi:=\{f\in L^1(G)\mid \dual{pc_{\xi,\eta}}{f}=0,\,  \forall p\in V_\pi,\, \forall  \xi,\eta \in \Hc_\pi^\infty \}
$ and that $\Pc_0$ in Theorem~\ref{Jpi0cor} was chosen as
$$ \{ p\in \Pc(G) \mid \opn{dim} \Vc_p \le N_0\}$$
for some $N_0$ large enough. 
For $N\in \NN$, $N\ge 0$ denote
$$ \Pc_N := \{ p   \in \Pc(G) \mid \opn{dim} \Vc_p \le N+N_0\}.$$
Then in all the above considerations $\Pc_0$ can be replaced by $\Pc_N$. 

Let now $W_{\pi, N} = V_\pi \cap \PP_N$.
These are $G\times G$ invariant subspaces of $V_\pi$, and they define   
a sequence of closed bilateral ideals 
$$I_N= \{ f\in L^1(G)\mid \dual{W_{\pi, N}\Cc_\pi}{f} =0\}. $$
Then  we have 
$$ K_\pi\subseteq \cdots  \subseteq I_{N} \subseteq I_{N-1} \subseteq \cdots \subseteq I_0=I^{W_\pi}.$$
This corresponds to a sequence of ideals
$$ 
(K_\pi)_Q \subseteq \cdots  \subseteq (I_{N})_Q \subseteq (I_{N-1})_Q \subseteq\cdots \subseteq (I_0)_Q=(I^{W_\pi})_Q
$$ in the finite dimensional algebra 
$(\ker\pi)_Q$. 
Hence there must be an $N_1$ such that $(I_{N})_Q=(I_{N_1})_Q$ for $N\ge N_1$, and by Theorem~\ref{idetiq} it follows that $I_{N} = I_{N_1}$ for $N\ge N_1$. 
On the other hand we have that $K_{\pi} = \bigcap\limits_{N\ge N_0} I_{N}$, therefore we have that
$K_{\pi} = I_{N_1}$. 
We can then replace $\Pc_0$ by $\Pc_{N_1}$, and we take into account that the space $I^{W_\pi}$ for this new
$\Pc_0$ is precisely $I_{N_1}$. 
\end{remark}

\section{$ L^\iy(G/N(\pi))$-invariant ideals}

Let $\pi\colon G \mapsto \BB(\Hc_\pi)$  be an  irreducible  
unitary representation of $G$, as before, 
 and let $\Oc_\pi$ be the corresponding coadjoint orbit.

\begin{definition}\label{nldef}
\normalfont
 a)  Let  $\n=\n(\pi)$ be the subalgebra generated by $\{\g(\ell)\}_{\ell \in\Oc_\pi}$.
Then $ \n(\pi)$ is an ideal since it is a  $G$-invariant subalgebra. 
Furthermore, we know that
\begin{eqnarray*}
 \O_\pi=\O_\pi+\n(\pi)^\perp, 
\end{eqnarray*}
i.e., the coadjoint orbit is saturated with respect to $ \n(\pi)$. 

b) Let 
\begin{eqnarray*}
N= N(\pi)=\exp {\n(\pi)}.
 \end{eqnarray*}
Then $ N(\pi)$ is a closed connected normal subgroup of $ G$

c) Choose a subspace $\x $ in $\g $ such that $\g=\x\oplus\n(\pi) $. 
Let $\X:=\exp\x $. 
The group $G $ is then the topological product of $\X $ and $N(\pi) $. 

d) For a function $p\colon G\to \C $ let $p_\n\colon G\to\C $ be defined by
\begin{eqnarray*}
 p_\n(xn):=p(n), \quad x\in\X,\;  n\in N(\pi).
 \end{eqnarray*}
Then let 
\begin{eqnarray*}
 \PP_{0,\n}:= \{p_n\vert \ p\in\PP_0\}.
 \end{eqnarray*}
 \end{definition}

\begin{definition}\label{iinvariant}
\normalfont
An ideal $ I\in\II^\pi$ is called $ L^\iy(G/N(\pi)) $-invariant  if and only if for all $ f\in I$
\begin{eqnarray*}
 \varphi f\in I\quad  \text{for all} \;  \varphi\in L^\iy(G).
 \end{eqnarray*}
 We denote by $\II^{\pi,N(\pi)} $ the collection of all $I\in\II^\pi $ which are $L^\iy(G/N(\pi)) $-invariant.
 \end{definition}
 
 Since $ \O_\pi$ is saturated with respect to $ \n(\pi)$ we know that the ideals $ \ker\pi^j$, $j=1,2,\dots $,  and $ j(\pi)$ are $ L^\iy(G/N(\pi))$ invariant. 
 Indeed we have by \cite{Ki62} that
\begin{eqnarray*}
 \ker\pi=\ker{\ind_{N(\pi)}^G {\pi\res{N(\pi)}}}.
 \end{eqnarray*}

\begin{lemma}\label{scoeff}
Let $p= p(s)$ be a polynomial that depends on $\Xc$ only. 
Then $ p\Phi\in \ker{\pi}^\perp \cap BC^\infty(G)$ for every  $\Phi\in \ker{\pi}^\perp \cap BC^\infty(G)$.
\end{lemma}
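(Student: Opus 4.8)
I would separate the statement into two parts: that $p\Phi$ is bounded (smoothness of $p\Phi$ being automatic, $p$ being a polynomial and $\Phi$ smooth), and that $p\Phi$ annihilates $\ker\pi$; the latter is easy once the former holds and once we know that $pf\in\ker\pi$ for every $f\in\ker\pi\cap\Sc(G)$.

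To obtain $p(\ker\pi\cap\Sc(G))\subseteq\ker\pi$: writing $G=\X\cdot N(\pi)$ as the topological product of Definition~\ref{nldef}, with $N(\pi)$ normal, a polynomial depending on $\X$ only is constant on each coset $gN(\pi)$. For $R>0$ choose a smooth cut-off $\chi_R$ depending on the $\X$-coordinate alone, equal to $1$ on $\{|x|\le R\}$ and supported in $\{|x|\le R+1\}$; then $p\chi_R\in L^\infty(G/N(\pi))$. Since $\O_\pi$ is saturated with respect to $\n(\pi)$, the ideal $\ker\pi=\ker(\opn{ind}_{N(\pi)}^G\pi\res{N(\pi)})$ is $L^\infty(G/N(\pi))$-invariant, so $(p\chi_R)f\in\ker\pi$ for all $f\in\ker\pi$; if moreover $f\in\Sc(G)$ then $pf\in\Sc(G)$ and $(p\chi_R)f\to pf$ in $L^1(G)$ by dominated convergence, whence $pf\in\ker\pi$ because $\ker\pi$ is closed. (Equivalently: in the Mackey model of $\opn{ind}_{N(\pi)}^G\pi\res{N(\pi)}$ the relation $f\in\ker\pi$ says that a partial Fourier transform of $f$ along the fibres $N(\pi)$ vanishes identically, and multiplication by $p$, being constant on those fibres, pulls out of the fibre integral.)

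The heart of the matter is the boundedness of $p\Phi$: since $p$ is a polynomial in the $\X$-coordinate, this is equivalent to $\Phi$ decaying rapidly along the cosets $gN(\pi)\cong\X$, uniformly in the $N(\pi)$-variable. For $\Phi=c^\pi_{\xi,\eta}$ with $\xi,\eta\in\Hc^\infty_\pi$ this is an explicit computation in the realization of $\pi$ inside $\opn{ind}_{N(\pi)}^G\sigma$ on $L^2(G/N(\pi),\Hc_\sigma)$ (with $\sigma\in\wh{N(\pi)}$; recall $\ker\pi=\ker\opn{ind}_{N(\pi)}^G\pi\res{N(\pi)}$): the vectors $\xi,\eta$ are Schwartz maps $G/N(\pi)\to\Hc_\sigma^\infty$, the operator $(\opn{ind}\sigma)(g)$ translates the argument of such a map by the $G/N(\pi)$-part of $g$ and multiplies by a cocycle of norm $1$, so $|c^\pi_{\xi,\eta}(g)|$ is bounded by a convolution of two Schwartz functions on the nilpotent group $G/N(\pi)$, evaluated at the $G/N(\pi)$-part of $g$ and independent of its $N(\pi)$-part; such a convolution is again Schwartz, so $p\,c^\pi_{\xi,\eta}$ is bounded, with bound $C_p\,q(\xi)q(\eta)$ for a suitable seminorm $q$ on $\Hc_\pi^\infty$, and likewise for all its $\lambda$- and $\rho$-derivatives, which are coefficients of smooth vectors of the same type. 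For a general $\Phi\in\ker\pi^\perp\cap BC^\infty(G)$ one passes to this case by a two-sided approximate-identity argument, using \eqref{vaniv:1}, which writes $\check F_{\eta',\xi'}\ast\Phi\ast\check F_{\xi,\eta}$ as a scalar multiple of $c^\pi_{\eta,\eta'}$ for Schwartz functions $F$; carrying the uniform Schwartz estimate along $G/N(\pi)$ through such a limit — equivalently, controlling the ``symbol'' of $\Phi$ well enough — is the main obstacle, and is exactly where the structural input $\ker\pi=\ker\opn{ind}_{N(\pi)}^G\pi\res{N(\pi)}$ is decisive.

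Finally, granting $p\Phi\in BC^\infty(G)\subseteq L^\infty(G)$: for $f\in\ker\pi\cap\Sc(G)$ one has $\dual{p\Phi}{f}=\dual{\Phi}{pf}=0$ by the first step (since $pf\in\ker\pi\cap\Sc(G)$ and $\Phi\in\ker\pi^\perp$), and since $\dual{p\Phi}{\cdot}$ is $\norm{\cdot}_1$-continuous on $L^1(G)$ and $\ker\pi\cap\Sc(G)$ is dense in $\ker\pi$, it vanishes on all of $\ker\pi$. Hence $p\Phi\in\ker\pi^\perp\cap BC^\infty(G)$.
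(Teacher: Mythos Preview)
Your overall plan --- split into boundedness and annihilation, and reduce the latter to $p(\ker\pi\cap\Sc(G))\subseteq\ker\pi$ via the $L^\infty(G/N(\pi))$-invariance of $\ker\pi$ --- is sound, and the cut-off/dominated-convergence step is correct.

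The boundedness argument, however, has a genuine gap. You propose to realize $\pi$ inside $\opn{ind}_{N(\pi)}^G\sigma$ on $L^2(G/N(\pi),\Hc_\sigma)$ for a single $\sigma\in\widehat{N(\pi)}$ and then read off Schwartz decay in the $\X$-direction from the shape of the induced action. But the identity $\ker\pi=\ker(\opn{ind}_{N(\pi)}^G\pi|_{N(\pi)})$ is a statement about kernels, not an equivalence (or even an embedding) of representations, and $\pi|_{N(\pi)}$ is typically a genuine direct integral over $\O_\pi|_{\n}$ rather than a single irreducible. Already for the Heisenberg group with a generic $\ell$ one has $\n(\pi)=\zg(\gg)$ one-dimensional, so no polarization lies in $\n(\pi)$, $\opn{ind}_{N(\pi)}^G\sigma$ is an \emph{infinite multiple} of $\pi$, and the assertion that smooth vectors $\xi,\eta\in\Hc_\pi^\infty$ ``are'' Schwartz maps $G/N(\pi)\to\Hc_\sigma^\infty$ has no direct meaning. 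Your own remark that the passage to general $\Phi$ is ``the main obstacle'' confirms that this part is not a proof but a program.

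The paper's route sidesteps all of this. Using the saturated form of Kirillov's character formula (via Pukanszky),
\[
c^\pi_{\pi(\varphi)}(g)=C_\pi\int_{\O_\pi|_{\n}}\widehat{(\lambda(g^{-1})\varphi)\res\n}(\ell')\,d\ell',
\]
one sees that at $g=sn$ ($s\in\X$, $n\in N(\pi)$) the integrand only involves the restriction of $\lambda(g^{-1})\varphi$ to $\n$; a factor $p(s)$, being constant on $N(\pi)$-fibres, is absorbed into $\varphi$, giving the identity $p\,c^\pi_{\pi(\varphi)}=c^\pi_{\pi(p\varphi)}$. This single line yields boundedness and membership in $\ker\pi^\perp$ simultaneously, since the right-hand side is again a smooth coefficient. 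The paper then asserts --- without argument, and in practice only ever applies the lemma to coefficients $c^\pi_A$ --- that this suffices for arbitrary $\Phi\in\ker\pi^\perp\cap BC^\infty(G)$; so on that final reduction neither proof is fully written out, but the paper's core computation for coefficients is complete where yours is not.
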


\begin{proof}
It is enough to show that $pc^\pi_{\pi(\check{\varphi})}\in \ker{\pi}^\perp$ 
for all $\varphi\in \Sc(G)$. 

By \cite[Lemma~2.3]{Pu79} it follows that there is a $G$-invariant measure $d\ell'$ on $\Oc_\pi\vert_\n$ such that
$$\int\limits_{\Oc_\pi} \hat{\varphi}(\xi) \dd \ell = \int \limits_{\Oc_\pi\vert_\n} \widehat{\varphi_\n}(\ell')\dd \ell'.$$ 
Thus we have
$$
c^\pi_{\pi(\varphi)} (g)  =\tr(\pi(g^{-1})\pi (\varphi))
 = C_\pi \int\limits_{\Oc_\pi \vert_\n} \widehat{(\lambda(g^{-1}) \varphi)\res\n}(\ell')\dd\ell', 
$$
with $C_\pi$ a constant.  
On the other hand we have $ (\lambda(g^{-1}) \varphi) \res\n(Y)= \varphi( g\exp Y )$ for $Y\in \n$, thus
$$ (\widehat{\lambda(g^{-1}) \varphi)\res\n}(\ell')=\int\limits_{\n} e^{-i\dual{\ell'}{Y}}\varphi(g\exp{Y}) \dd Y, \quad \ell'\in \Oc_\pi\vert_\n.
$$
Hence, if $p(sn)=p(s)$ for every $n\in N_\pi$, $s\in \Xc$,  we have that 
$$ p(sn) \widehat{(\lambda(sn)\varphi)\res\n}(\ell') = \widehat{(\lambda(sn) \psi)\res\n}(\ell'), $$
where $\psi = p\varphi$. 
It follows that $p c^\pi_{\pi(\varphi)}=c^\pi_{\pi(\psi)}$, and this finishes the proof of the lemma.  
\end{proof}

 \begin{lemma}\label{resiniperp}
Let $I\in\II^{\pi,N(\pi)} $ and $\ps$ in $I^\perp \cap C(G) $. 
Then the function 
$\psi_n $
 is also contained in $I^\perp $. 
 \end{lemma}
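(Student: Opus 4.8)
The plan is to exploit two structural properties of $I$: as a closed two-sided ideal of $L^1(G)$ it is invariant under left and right translations, and, by hypothesis, $\varphi f\in I$ whenever $f\in I$ and $\varphi\in L^\infty(G/N(\pi))$. Recall (Definition~\ref{nldef}) that $G=\Xc\cdot N(\pi)$ as a topological product and that $\psi_\n(xn)=\psi(n)$ for $x\in\Xc$, $n\in N(\pi)$; I shall use that in the coordinates $g=xn$ the Haar measure of $G$ factors as $\dd g=\dd x\dd n$. Since $\psi\in I^\perp\subset L^\infty(G)$ is bounded, so is $\psi_\n$, hence $\dual{\psi_\n}{\cdot}$ is continuous on $L^1(G)$; by Theorem~\ref{sdense} it therefore suffices to prove $\dual{\psi_\n}{f}=0$ for $f\in I\cap\Sc(G)$.

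The first step would be an auxiliary vanishing: $\int_{N(\pi)}\psi(n)\,h(n)\dd n=0$ for every $h\in I\cap\Sc(G)$. To obtain this, fix an approximate identity $(\varphi_R)_{R>0}$ on $\Xc$ at the identity element ($\varphi_R\ge 0$, $\int_\Xc\varphi_R\dd x=1$, supports shrinking to $\{e\}$) and regard each $\varphi_R$ as an element of $L^\infty(G/N(\pi))$ by $\varphi_R(xn):=\varphi_R(x)$. Then $\varphi_R h\in I$, so $\dual{\psi}{\varphi_R h}=0$, while Fubini gives
$$\dual{\psi}{\varphi_R h}=\int_\Xc\varphi_R(x)\,g_h(x)\dd x,\qquad g_h(x):=\int_{N(\pi)}\psi(xn)\,h(xn)\dd n.$$
Since $h\in\Sc(G)$ and $\psi$ is bounded and continuous, $n\mapsto\sup_{x\in K}|h(xn)|$ lies in $L^1(N(\pi))$ for every compact $K\subset\Xc$, so $g_h$ is continuous by dominated convergence; letting $R\to\infty$ gives $g_h(e)=\int_{N(\pi)}\psi(n)\,h(n)\dd n=0$.

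The second step would reduce the lemma to the first. For $f\in I\cap\Sc(G)$, Fubini gives $\dual{\psi_\n}{f}=\int_{N(\pi)}\psi(n)\,f^\sharp(n)\dd n$ with $f^\sharp(n):=\int_\Xc f(xn)\dd x\in L^1(N(\pi))$. Choosing cut-offs $\chi_R=\chi_{B_R}$ on $\Xc$ and setting $h_R:=\int_\Xc\chi_R(x)\,\lambda(x^{-1})f\dd x$, one has $h_R\in I$ (a Bochner integral of left translates of $f$, all of which lie in the closed ideal $I$) and $h_R\in\Sc(G)$ (the integrand is a compactly supported continuous $\Sc(G)$-valued map), while $h_R(n)=\int_\Xc\chi_R(x)f(xn)\dd x\to f^\sharp(n)$ pointwise, with $|h_R(n)|\le\int_\Xc|f(xn)|\dd x=:F(n)$ and $\int_{N(\pi)}F\dd n=\|f\|_1<\infty$. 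By the first step $\int_{N(\pi)}\psi(n)\,h_R(n)\dd n=0$ for each $R$, and by dominated convergence with majorant $\|\psi\|_\infty F$ this integral tends to $\int_{N(\pi)}\psi(n)\,f^\sharp(n)\dd n=\dual{\psi_\n}{f}$. Hence $\dual{\psi_\n}{f}=0$ on $I\cap\Sc(G)$, and by density and continuity $\psi_\n\in I^\perp$.

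The one genuinely delicate point is the continuity of $g_h$ at the identity in the first step, i.e.\ producing an $L^1(N(\pi))$-majorant for $n\mapsto h(xn)$ uniform for $x$ near $e$; this is exactly where the Schwartz hypothesis on $h$ enters (hence the appeal to Theorem~\ref{sdense} to reduce $f$ to $\Sc(G)$), and it rests on the elementary comparison of homogeneous norms $|xn|\ge c|n|-C_K$ for $x$ in a compact subset of a nilpotent group. Everything else --- the two uses of Fubini, the translation-invariance of $I$, and the use of the $L^\infty(G/N(\pi))$-invariance hypothesis --- is routine.
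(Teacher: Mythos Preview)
Your proof is correct and rests on the same two ingredients as the paper---the $L^\infty(G/N(\pi))$-invariance of $I$ and its invariance under translations---but organizes them less efficiently. The paper carries the translation parameter from the very first line: since $\lambda(t^{-1})(\varphi f)\in I$ for every $t\in G$ and every $\varphi\in L^\infty(G/N(\pi))\cap C(G)$, one has $\int_G\psi(g)\varphi(g)f(tg)\dd g=0$; decomposing $g=sn$ and letting $\varphi$ vary gives $\int_{N(\pi)}\psi(n)f(tn)\dd n=0$ for all $t$, and a single integration over $t\in\Xc$ then yields $\dual{\psi_\n}{f}=0$. Your second step---building $h_R$ as a Bochner integral of translates and passing to the limit by dominated convergence---is correct but superfluous: your own first step, applied to the translates $\lambda(t^{-1})f\in I\cap\Sc(G)$, already gives $\int_{N(\pi)}\psi(n)f(tn)\dd n=0$ for every $t$, and integrating this over $t\in\Xc$ finishes immediately. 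On the other hand, your explicit discussion of the continuity of $g_h$ at the identity (and hence the legitimacy of evaluating at $s=e$) is a point the paper's four-line argument passes over in silence.
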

\begin{proof}
Indeed, we have for $f\in I\cap \S(G)$, $\varphi\in \L\iy{G/N(\pi)}\cap C(G) $,  and  $t\in G$, 
\begin{eqnarray*}
 0=\int\limits_G \ps(g)\va(g)f(tg)dg=\int\limits_{G/N(\pi)}\va(s) \int\limits_{\np} f(tsn)\ps(sn)\, dn\, d\dot s.
 \end{eqnarray*}
Then 
\begin{eqnarray*}
 0= \int\limits_{\np} f(tn)\ps(n)dn, \quad \text{for all}\;  t\in G.
 \end{eqnarray*}
Hence
\begin{eqnarray*}
 0= \int\limits_{G/\np}\int\limits_{\np} f(tn)\ps(sn)dnd\dot t=\int\limits _{G}f(g)\ps_\n(g)dg.
 \end{eqnarray*}
Therefore $\ps_\n\in I^\perp  $.
\end{proof}

\begin{lemma}\label{wisinv}
Let $ W\subset W_\pi$ be a translation invariant subspace. 
Then the ideal $ I^W$ is $ \L\iy {G/N(\pi)}$-invariant.
 \end{lemma}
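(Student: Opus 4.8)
The goal is to show that for a translation invariant $W\subset W_\pi$, the ideal $I^W=\{f\in L^1(G)\mid \dual{W(\ker\pi^\perp)}{f}=0\}$ is $L^\infty(G/N(\pi))$-invariant. By Definition~\ref{iinvariant} this means: if $f\in I^W$ and $\varphi\in L^\infty(G)$ (depending only on the $\Xc$-variable, i.e.\ $\varphi\in L^\infty(G/N(\pi))$), then $\varphi f\in I^W$, i.e.\ $\dual{pc_{\eta,\delta}}{\varphi f}=0$ for all $p\in W$ and all $\eta,\delta\in\Hc_\pi^\infty$.

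The plan is to dualize: $\dual{pc_{\eta,\delta}}{\varphi f}=\dual{\varphi\cdot(pc_{\eta,\delta})}{f}$, so it suffices to show that $\varphi\cdot(pc_{\eta,\delta})$ again lies in the space $W(\ker\pi^\perp)$ — or at least in its weak$^*$ closure inside $L^\infty(G,\omega)$, which still annihilates $f\in I^W$ (recall from the proof of Theorem~\ref{expjpi} that $(I^W)^\perp$ is a weak$^*$-closed subspace of such a weighted $L^\infty$). First I would reduce $\varphi$ to a polynomial in the $\Xc$-variables by a standard density/approximation argument, or directly to functions of the form $g\mapsto e^{i\dual{\lambda}{\log_\Xc g}}$; it is cleanest to first treat $\varphi=r$ a polynomial depending only on $\Xc$. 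Then by Lemma~\ref{scoeff}, multiplication by such an $r$ maps $\ker\pi^\perp\cap BC^\infty(G)$ into itself, so $r(pc_{\eta,\delta})=p\cdot(rc_{\eta,\delta})$ with $rc_{\eta,\delta}\in\ker\pi^\perp$; since $p\in W\subseteq W_\pi=V_\pi\cap\PP_0$ we have $p\cdot(\text{any smooth coefficient})\in j(\pi)^\perp$, and more precisely $p$ times an element of $\ker\pi^\perp$ lies in $W(\ker\pi^\perp)$ provided the coefficient is itself a coefficient function — here I would invoke Lemma~\ref{scoeff} in the sharper form $rc^\pi_{\pi(\check\varphi)}=c^\pi_{\pi(\check\psi)}$ to see that $rc_{\eta,\delta}$ is again a coefficient $c_{\eta',\delta'}$ (or a finite sum of such), hence $p(rc_{\eta,\delta})\in W\Cc_\pi\subseteq (I^W)^\perp$.

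For a general $\varphi\in L^\infty(G/N(\pi))$, I would approximate in the weak$^*$ topology of $L^\infty(G,\omega')$ (for a suitable weight) by polynomials in $\Xc$: the products $r(pc_{\eta,\delta})$ converge weak$^*$ to $\varphi(pc_{\eta,\delta})$, all of them lie in $(I^W)^\perp$, and $(I^W)^\perp$ is weak$^*$-closed, so the limit is in $(I^W)^\perp$ as well. This gives $\dual{pc_{\eta,\delta}}{\varphi f}=\dual{\varphi(pc_{\eta,\delta})}{f}=0$, which is exactly $\varphi f\in I^W$.

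The main obstacle is the bookkeeping around the weight $\omega$ and the weak$^*$ topology: one must make sure that $\varphi(pc_{\eta,\delta})$ still lies in the weighted $L^\infty$ space on which $f\in L^1(G,\omega)$ pairs continuously, and that the polynomial approximants of the bounded function $\varphi$ converge weak$^*$ in that space. Since $\varphi$ is merely bounded (not polynomially growing in a helpful direction), one should pair against $f\in I^W\cap L^1(G,\omega)$ first (a dense subset of $I^W$, using that $W\Cc_\pi\subset L^\infty(G,\omega)$ as in the proof of Theorem~\ref{expjpi}) and then remove the restriction by density of $I^W\cap L^1(G,\omega)$ in $I^W$. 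The only genuinely representation-theoretic input is Lemma~\ref{scoeff}, which handles the compatibility of multiplication by $\Xc$-polynomials with the coefficient functions; everything else is soft functional analysis.
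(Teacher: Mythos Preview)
Your overall strategy is the right one and coincides with the paper's: dualize, so that it suffices to show $\varphi\cdot(pc_A)\in (I^W)^\perp$, and reduce this to the fact that multiplying a smooth coefficient by a function constant along $N(\pi)$ yields again a smooth coefficient. The paper's proof is a two-line application of exactly this: it invokes \cite{Pe94} (and the argument of Lemma~\ref{scoeff}) to get that for every $\varphi\in\Sc(G/N(\pi))$ and every $A\in\BB(\Hc_\pi)_\infty$ the product $\varphi\,c_A$ is again a smooth coefficient $c_{A'}$; then $\dual{pc_A}{\varphi f}=\dual{p(\varphi c_A)}{f}=0$ for $f\in I^W$. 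The passage from $\Sc(G/N(\pi))$ to $L^\infty(G/N(\pi))$ is immediate because any bounded $\varphi$ is a bounded pointwise limit of Schwartz functions, so $\varphi_nf\to\varphi f$ in $L^1(G)$ and $I^W$ is closed.

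Where your proposal diverges, and runs into trouble, is the choice to approximate a general $\varphi\in L^\infty(G/N(\pi))$ by \emph{polynomials} in the $\Xc$-variables. This is not just bookkeeping: polynomials cannot weak$^*$ approximate an arbitrary bounded function in any topology compatible with the pairings you need. Concretely, if $\varphi(x)=e^{i\dual{\lambda}{x}}$ with $\lambda\ne 0$, then for any Schwartz test function $g$ on $\Xc$ one would need $\int r_n g\to \hat g(-\lambda)$; on the Fourier side $\widehat{r_n}$ is supported at $0$, so $r_n\to\varphi$ in $\Sc'(\Xc)$ is impossible. Equivalently, in a fixed weighted space $L^\infty(\Xc,1/\omega')$ with polynomial $\omega'$ only finitely many monomials belong, so no weak$^*$ density there either. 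Your fallback of restricting to $f\in I^W\cap L^1(G,\omega')$ does not rescue this, because the obstruction is on the $\varphi$-side, not on the $f$-side.

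The fix is exactly what the paper does: replace polynomials by Schwartz functions on $G/N(\pi)$. The same trace-formula computation that proves Lemma~\ref{scoeff} (write $c^\pi_{\pi(\psi)}$ via the Kirillov character formula integrated over $\Oc_\pi\vert_\n$) shows that $\varphi\,c^\pi_{\pi(\psi)}=c^\pi_{\pi(\varphi\psi)}$ for any $\varphi\in\Sc(G/N(\pi))$, not just for polynomials; this is the content of the reference to \cite{Pe94}. With that in hand your argument goes through verbatim, and the final density step (Schwartz $\to L^\infty$) is painless because Schwartz functions are bounded.
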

\begin{proof}
It follows from \cite{Pe94} (see also the proof of the above Lemma~\ref{scoeff}) that for any $ 
\varphi\in\S(G/N(\pi))$ and any smooth coefficient $ c_A$ of $ \pi$ we have that the function  
$\varphi c_A$ is also a smooth coefficient function of $ \pi$. 
Let now $ p\in W$ and $ f\in I^W$. 
Then 
$$
\dual{pc_A}{f} =0 \quad    \text{for all}\;  A\in \BB(\Hc_\pi)_\infty.
$$
Hence for any $ \varphi\in \S(G/N(\pi))$
\begin{eqnarray*}
 \dual{pc_A}{\va f}=\dual{p(\va c_A)}{ f}\rangle =0\quad   \text{for all}\;  A\in \BB(\Hc_\pi)_\infty.
 \end{eqnarray*}
Thus we get that $ \va f\in I^W$.
\end{proof}

\begin{proposition}\label{iothded}
Let $I\in \II^{\pi,\np} $. Then
\begin{eqnarray*}
 I^\perp \cap \PP_{0,\n}\Cc_\pi
 \end{eqnarray*}
is weak$^*$ dense in $I^\perp  $.
 \end{proposition}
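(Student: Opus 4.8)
The plan is to reduce $I^\perp$ to a weak$^*$-dense family of explicitly computable functions and then, using Lemma~\ref{scoeff}, to absorb every polynomial factor transverse to $N(\pi)$ into a coefficient function. Fix $\xi\in\Hc_\pi^\infty$ of norm $1$ and a maximal projection $Q=Q_\xi$ as in Remark~\ref{TheQ}. Since $j(\pi)\subseteq I$ we have $I^\perp\subseteq j(\pi)^\perp$, and by the argument in the proof of Theorem~\ref{expjpi} every element of $j(\pi)^\perp$ lies in the weak$^*$-closure, taken in $L^\infty(G,\omega)$ for a weight $\omega$ bounding $\PP_0$ (Theorem~\ref{Jpi0cor}), of $\opn{span}(\PP_0(\ker{\pi})^\perp)$, hence of $\opn{span}(\PP_0\Cc_\pi)$. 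On the other hand, a variant of the approximation argument in the proof of Proposition~\ref{mqgene} shows that $I^\perp$ is the weak$^*$-closed linear span of $(\ker{\pi})^\perp$ together with all functions $\check\alpha\ast\check Q\ast\psi\ast\check Q\ast\check\beta$ with $\alpha,\beta\in\Sc(G)$ and $\psi\in I^\perp$. Because $1\in\PP_0$ and $1_\n=1\in\PP_{0,\n}$ we have $\Cc_\pi\subseteq\PP_{0,\n}\Cc_\pi$; since moreover $\Cc_\pi\subseteq(\ker{\pi})^\perp\subseteq I^\perp$ is weak$^*$-dense in $(\ker{\pi})^\perp$, the $(\ker{\pi})^\perp$-part already lies in the weak$^*$-closure of $\PP_{0,\n}\Cc_\pi\cap I^\perp$. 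Thus it suffices to approximate each $\Phi:=\check\alpha\ast\check Q\ast\psi\ast\check Q\ast\check\beta$.

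Since $I$ is two-sided, $\Phi\in I^\perp$; and by Remark~\ref{qpsq} together with \eqref{cpmulfg} and \eqref{fconco}, $\Phi$ is a finite sum $\sum_j p_j c_{A_j}$ with $p_j\in\PP_0$ and $c_{A_j}\in\Cc_\pi$, so $\Phi$ is smooth and, lying in $I^\perp\subseteq L^\infty(G)$, bounded. Working in the polynomial chart \eqref{chart} and the splitting $G\simeq\X\times N(\pi)$ of Definition~\ref{nldef}, write the $\X$-coordinates as $s$ and the $\n(\pi)$-coordinates as $t$, and expand each $p_j(s,t)=\sum_\gamma q_{j\gamma}(s)\,t^\gamma$ with $q_{j\gamma}$ a polynomial in $s$ alone and $t^\gamma$ a monomial in $t$. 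As $q_{j\gamma}$ depends on $\X$ only, Lemma~\ref{scoeff} shows that each $q_{j\gamma}c_{A_j}$ is again a smooth coefficient function of $\pi$, so that $\Phi=\sum_\gamma t^\gamma c_{D_\gamma}$ with $c_{D_\gamma}\in\Cc_\pi$ and $t^\gamma=(t^\gamma)_\n$; that is, the polynomial factors now depend on $N(\pi)$ only.

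It remains to replace the monomials $t^\gamma$ by elements of $\PP_{0,\n}$, and this is where the $L^\iy(G/N(\pi))$-invariance of $I$ enters. Recall that $\Oc_\pi=\Oc_\pi+\n(\pi)^\perp$, equivalently $\ker{\pi}=\ker{\ind_{N(\pi)}^G(\pi\res{N(\pi)})}$; this is precisely what makes available both Lemma~\ref{scoeff} and, by \cite{Pe94} (see the proof of Lemma~\ref{scoeff}), the fact that $\Cc_\pi$ is a module over $\Sc(G/N(\pi))$. Using that $\Phi\in I^\perp$ is continuous, Lemma~\ref{resiniperp} gives $\Phi_\n\in I^\perp$, and $L^\iy(G/N(\pi))$-multiplicativity gives $\va\Phi\in I^\perp$ for $\va\in\Sc(G/N(\pi))$; combining these with the two module structures of $\Cc_\pi$ one trades the monomials $t^\gamma$ --- which occur in the $t$-expansions of members of the $G\times G$-invariant space $\PP_0$ --- for their images under the natural projection $\PP(G)\to\PP_{0,\n}$, at the price of enlarging $\PP_0$ as permitted by the remark following Theorem~\ref{wiseqpqinv}, and without leaving $I^\perp$. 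An induction on the $\n(\pi)$-degree then also disposes of the remainder $\Phi-\Phi_\n$, which vanishes on $N(\pi)$ and hence lies in the ideal of $\PP(G)$ generated by the $\X$-coordinates. Carrying the weak$^*$-closures along yields $\Phi$ in the weak$^*$-closure of $\PP_{0,\n}\Cc_\pi\cap I^\perp$, completing the argument.

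The delicate step is the third paragraph: making precise that, for an $L^\iy(G/N(\pi))$-invariant ideal, the polynomial data transverse to $N(\pi)$ is redundant, and that the surviving polynomial factors can be kept inside $\PP_{0,\n}$ while remaining in $I^\perp$. Everything else --- the duality of the first paragraph and the weak$^*$ bookkeeping in the weighted $L^\iy$-topology --- is routine.
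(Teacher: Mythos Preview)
Your first two paragraphs set things up correctly: the span of the functions $\check Q_\delta\ast\psi\ast\check Q_\eta$ (or their twists $\check\alpha\ast\check Q\ast\psi\ast\check Q\ast\check\beta$) is weak$^*$-dense in $I^\perp$, each such function lies in $I^\perp\cap\PP_0\Cc_\pi$, and Lemma~\ref{scoeff} lets you absorb any $\Xc$-only polynomial factor into the coefficient. The gap is in your second and third paragraphs, where you expand $p_j(s,t)=\sum_\gamma q_{j\gamma}(s)\,t^\gamma$ in \emph{monomials} $t^\gamma$ in the $\n(\pi)$-coordinates. There is no reason why those bare monomials should lie in $\PP_{0,\n}=\{p_\n\mid p\in\PP_0\}$: $\PP_0$ is a fixed finite-dimensional $G\times G$-invariant space, and a monomial $t^\gamma$ occurring in some $p_j\in\PP_0$ need not itself be $p'\vert_{N(\pi)}$ for any $p'\in\PP_0$. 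Your proposed remedy --- enlarging $\PP_0$, invoking a ``natural projection $\PP(G)\to\PP_{0,\n}$'', and passing to $\Phi_\n$ via Lemma~\ref{resiniperp} followed by an induction on ``$\n(\pi)$-degree'' --- does not work: enlarging $\PP_0$ changes the space $\PP_{0,\n}$ in the statement you are trying to prove, no such projection exists, and $\Phi-\Phi_\n$ is not a polynomial, so the appeal to ``the ideal of $\PP(G)$ generated by the $\Xc$-coordinates'' is meaningless.

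The fix is to avoid monomials altogether and decompose along the Jordan--H\"older basis, exactly as the paper does. Writing $\check Q_\delta\ast\psi\ast\check Q_\eta=\sum_{j=1}^{m}p_j c_{A_j}$ as in Remark~\ref{qpsq} with $\{p_m,\dots,p_1\}$ a Jordan--H\"older basis of $\Vc_p\subseteq\PP_0$, one uses \eqref{trnscoeff} to obtain
\[
p_j(xn)=(\lambda(x^{-1})p_j)(n)=p_j(n)+\sum_{i<j}\check a_{i,j}(x)\,p_i(n),\qquad x\in\Xc,\ n\in N(\pi).
\]
Here each $\check a_{i,j}$, restricted to $\Xc$ and extended to be constant along $N(\pi)$, is a polynomial depending on $\Xc$ only, so Lemma~\ref{scoeff} absorbs it into the coefficients; and the surviving $N(\pi)$-factors are precisely the $(p_i)_\n$ with $p_i\in\Vc_p\subseteq\PP_0$, hence in $\PP_{0,\n}$ by definition. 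This yields $\check Q_\delta\ast\psi\ast\check Q_\eta\in\PP_{0,\n}\Cc_\pi\cap I^\perp$ directly, with no need for the $L^\infty(G/N(\pi))$-invariance, Lemma~\ref{resiniperp}, or any induction.
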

\begin{proof} 
We know from Proposition \ref{mqgene} that the span of the subspaces $\check Q_\de\ast I^\perp \ast\check Q_\eta$,
with $ \delta,\eta\in \H_\pi^\iy $ is weak$^*$ dense in $I^\perp  $. 
Now we can write for $\psi\in I^\perp $
\begin{eqnarray*}
 \check Q_\delta\ast \psi \ast\check Q_\eta=pc_{\eta,\delta}+\sum_{j=1}^{m-1}p_j c_{A_j}
 \end{eqnarray*}
for some $p\in\PP_0  $ and some smooth operators $A_j$, $j=1,\dots, m-1$, where 
$\{p=p_m,\dots, p_1\} $ is  a Jordan-H\"older basis of $\V_p $.
On the other hand for $x\in\X$, $n\in\n_\pi $, we have that
$$
\begin{aligned}
 (\check Q_\delta\ast \ps \ast\check Q_\eta)(xn)&=p(xn)c_{\eta,\delta}(xn)+\sum_{j=1}^{m-1}p_j (xn)c_{A_j}(xn)\\
 &=(p(n)+\sum_{j=1}^{m-1}\check{a}_{j,m-1}(x)p_j(n))c_{\eta,\delta}(xn)\\
 &\qquad + \sum_{j=1}^{m-1}(p_j(n)+\sum_{i=1}^{j-1}\check {a}_{i,j}(x)p_i(n))c_{A_j}(xn) \\
 &=p_\n(xn) c_{\eta,\delta}(xn) +\sum_{j=1}^{m-1}(p_j)_\n(xn)c_{B_j}(xn)
 \end{aligned}
$$
for some $B_j\in \mathbb B(\H_\pi)_\iy $, where we have used Lemma~\ref{scoeff}.
Hence $\check Q_\delta\ast \ps \ast\check Q_\eta \in \PP_{0,\n}\Cc_\pi$, which finishes the proof.
\end{proof}

\section{Examples}\label{examples}
   
\subsubsection*{1. Flat coadjoint orbits}
Assume that $G$ is a connected simply connected
 Lie group with Lie algebra $\gg$, 
and $\ell\in \gg^\ast$ is such that the corresponding orbit is flat, that is, 
$\Oc_\ell= \ell + \gg(\ell)^\perp$, or, equivalently, $\gg(\ell)$ is an ideal in $\gg$.

It is well-known that $\Oc_\ell$ corresponds to a set $\{[\pi_\ell]\}$ of spectral synthesis in $\hat G$ (see \cite{HL81}). 
Let us present below another way of seeing this.

Let $\gg_0$ be a complement of $\gg(\ell)$, 
$$ \gg = \gg_0 + \gg_\ell, $$
and identify $G$ and $\gg$ via the exponential mapping. 
With this notation a polynomial $p\in \Pc(G)$ can be written as
$$ p(x + s) = \sum\limits x^\alpha p_\alpha(s), \quad x\in \gg_0, \, s\in \gg(\ell).$$
Here $x^\alpha$ corresponds to the polynomial $t_1^{\alpha_1} \cdots t_k^{\alpha_k}$,  $k =\opn{dim}\gg_0$, 
in the coordinates of the first kind, 
 and are linearly independent polynomials that generate $\Pc(\gg_0)$.
Since $\Oc_\ell$ is flat, for $p \in \Pc(G)$, $p(x+s) =\sum\limits x^\alpha p_\alpha(s)$ and $f\in \Sc(G)$,  we have that 
$$ pf \in \ker{\pi_\ell} \Leftrightarrow \int\limits_{\Oc_\ell} \widehat{pf} (\xi) d \xi = 0.$$
(See \cite[Thm.~1]{Lu86}.) 
Here $\hat f$ denotes the Fourier transform on $\gg$, and $d\xi$ is the invariant normalized Liouville measure on $\Oc_\ell$. 
With the identification $\Oc= \ell + \gg_0^\ast$ this is further equivalent with the fact 
that 
$$ \sum\limits q_\alpha (x) \Fc_{\gg(\ell)}(p_\alpha f)(x, \ell) =  0\quad  \text{for all } x\in \gg_0,  $$
where $\Fc_{\gg(\ell)}$ is the partial Fourier transform in variable $s \in \gg(\ell)$. 
We get thus that, in this case, the space $K_{\pi_\ell, 0}$ defined as in Lemma~\ref{Kpi0} consists of all 
$f\in \Sc(G)$ such that
$$ \mathbf{D}_{\gg(\ell)} \Fc_{\gg(\ell)}((p_\alpha f)(x, \ell) =0, \quad x\in\gg_0, $$
for every $\mathbf{D}_{\gg(\ell)} $ differential operator on $\gg(\ell)$. 
The closure of this space in $L^1(G)$ is nothing else than 
$$ \{ f \in L^1(G) \mid \Fc_{\gg(\ell)}(p_\alpha f)(x, \ell) =0, \; \forall x\in \gg_0\} =\ker \pi.$$
Thus we find again that $\{\pi_\ell\}$ is of spectral synthesis.

Note that for the corresponding representation $\pi=\pi_\ell$ we have that
$$ \vert c_{\xi, \eta}(x +s )\vert   = \vert c_{\xi, \eta}(x) \vert \, \quad \text{for all} \;  s\in \gg(\ell), x\in \gg_0, $$ 
and 
$$ c_{\xi, \eta} \in \Sc (\gg_0).$$
It follows that $V_\pi$ consists of polynomials $p$ such that $p(x+s) = p(x)$, for all $x\in \gg_0$, $s\in \gg(\ell) $. 
On the other hand, by using Lemma~\ref{scoeff} for $p=p(x)$ and $\xi, \eta\in \Hc_{\pi_\ell}$   there is an $f_p\in \Sc(G)$ such that 
$$ p c_{\xi, \eta}  = c_{\pi_\ell(f_p)}.$$
We immediately see therefore that $V_{\pi_\ell} \Cc_{\pi_\ell} \subseteq \ker{\pi_\ell}^\perp$.

\subsubsection*{2. Step $3$ nilpotent Lie groups}
The complete description of the structure of primary ideals of nilpotent Lie groups of step $3$ can be found in \cite{Lu83a}, and we refer to this paper for details of the computations below. 
Here we briefly show how our present results can be used to find the ideals in $\II^\pi$ that are of the form 
$I^W$ with $W$ translation invariant subspace in $V_\pi$.

Let $G$ be a  step $3$ nilpotent Lie group, connected and simply connected, and let $\gg$ be its Lie algebra. 
We can assume  that 
$$ [\gg, [\gg, \gg]]= \zg(\gg) = \RR z $$
the centre $\zg(\gg)$ is one dimensional,  
$\ell \in \gg^\ast$ satisfies $\dual{l}{\zg(\gg)}\ne 0$, and   $0\ne z\in \zg$ is chosen such that
$\dual{l}{z}=1$.
Let also $y_1, \dots, y_k$ be a basis  of $[\gg, \gg]$.
Then there exist $x_1, \dots, x_k\in \gg$ such that
$$ [x_j, y_l]=\delta_{jl} z,  \quad j, l=1, \dots, k. $$ 
Consider now the two-step subalgebra
$$ \hg= \{ h  \in \gg \mid [h, [\gg, \gg]]=0\}, $$
which is also an ideal in $\gg$. 
Then the centre of $\hg$
is the abelian subalgebra $\gg_0= \gg(l) +[\gg, \gg]$.  
Note that $\gg_0$ is an ideal in $\g$ and contains $\gg(\ell)$. 

The representation $\pi$ associated with $\ell$ can be realized as
$$ \pi =\opn{ind}^G_H \pi_0$$
where $\pi_0$ is the representation of $H$ associated with $\ell\big \vert_\hg$. 

If we denote $\Xc=\opn{span}\{x_1, \dots, x_k\}$ and $\Xit = \opn{exp}\Xc$, then 
we can write $G =  \Xit\cdot  H$. 

Below we identify the groups with their Lie algebras, via the exponential mapping. 
Let now $f\in L^2(\Xit, \Hc_{\pi_0})$, 
and for  $x\in \Xit$,  $h \in H$ define 
$\tilde{f}(xh) = \pi_0(h^{-1})( f(x))$. 
Then we have for $u \in \Xit$ 
$$ \begin{aligned}
\pi(x h)f(u) = & \tilde{f}(h^{-1} x^{-1} u) = \tilde{f}(x^{-1} u u^{-1} x h^{-1} x^{-1} u)\\
= & \pi_0 (u^{-1} x h  x^{-1} u)(\tilde{f}(x^{-1} u))\\
= & e^{i\dual{\ell}{[u^{-1} x, h]+ \frac{1}{2} [u^{-1} x, [u^{-1} x, h]]}}
\pi_0 (h) (\tilde{f}(x^{-1} u)).
\end{aligned}
$$
Note that $[u^{-1}x, h]=  [-u+x, h]$ for every $h$ in $\hg$, and 
$$x^{-1} u = (u-x) (-\frac{1}{2}[x, u] -\frac{1}{6}[x, [x, u]]- \frac{1}{3}[u, [u, x]]), $$
and $(-\frac{1}{2}[x, u] -\frac{1}{6}[x, [x, u]]- \frac{1}{3}[u, [u, x]])\in  \gg_0$. 

Take now $f= \phi \otimes\xi$, $g = \psi \otimes \eta$,  where $\phi, \psi \in \Sc(\Xc)$ and $\xi, \eta \in \Hc^\infty_{\pi_0}$. 
Also write $h \in \hg $ as $h=  t ns$ where $n \in [\gg, \gg]$, and $s\in \gg(l)/[\gg, \gg]$.
Then 
$$
\begin{aligned} 
c^\pi_{f, g} (xt ns)  &  =  \dual{f}{\pi_\ell (xtns)g}\\
  & = c^{\pi_0}_{\xi, \eta}(t)   \int\limits_{\Xc}  e^{-i\chi(t, s, n, x, u) }\phi(u)\overline{\psi(u-x)}  \, du
 \end{aligned}
 $$
 where
  $$ \begin{aligned}
  \chi(t, s, n, x, u) & = \dual{\ell}{ s+n}+ \dual{\ell}{[u-x, t+n]}+ \dual{\ell}{[u-x, [u-x, t+ s+n]]/2} \\ 
  &  -\dual{\ell}{[x, u]/2 +[x, [x, u]]/6+[u, [u, x]/3]}.
  \end{aligned}
 $$
 Here
$$ 
\begin{aligned}
 \;  [u-x, [u-x, s+t]] &  =  [u-x, [u-x, s]]\\
 & =  [u, [u, s]]+ [x, [x, s]] - [x, [u, s]] -[u, [x, s]]\\
                & = [u, [u, s]]+ [x, [x, s]] - 2[u, [x, s]] + [s, [u, x]].
\end{aligned}
$$ 
Since $s\in \gg(l)$ we get 
\begin{equation}
\label{c:1}
\begin{aligned}
c^\pi_{f, g} (xtns) = e^{-i\dual{\ell}{s+n+t}}  e^{-i \dual{\ell}{[x,[x, s]]-[x, n]}/2}  
c^{\pi_0}_{\xi, \eta}(t)
\\
\times \int\limits_{\Xc} e^{-i\dual{\ell}{[u, n+t]}} e^{i\dual{\ell}{[u, [x, s]]}} 
e^{-i\dual{\ell}{[u, [u, s]]/2}} \phi(u) \overline{\psi_1(u, x)} \, du. 
\end{aligned}
\end{equation}
where $\psi_1(u, x) = e^{-i \dual{\ell}{[x, u]/2 +[x, [x, u]]/6+[u, [u, x]/3]}}\psi(u-t)$.

Recall that $\dual{\ell}{[u, n]}= \dual{u}{n}$, the bracket in the right-hand side being  the euclidian scalar product. 
We get by \eqref{c:1} that 
\begin{equation}
\label{c:2}
\begin{aligned}
c^\pi_{f, g} (xtns)  & = (2\pi)^{-k}  e^{-i\dual{\ell}{s+n+t }} 
 e^{-i \dual{\ell}{[x,[x, s]]-[x, n]}/2} c^{\pi_0}_{\xi, \eta}(t)\times \\
 & \times \int\limits_{\Xc^\ast}\hat{G}(x, n-\ell\circ \opn{ad}([x, s])-\ell\circ \opn{ad}(t)-\xi) \hat{\Phi} (s, \xi)  \, d\xi,  
\end{aligned}
\end{equation}
where, for $N$ large enough (to be chosen later on), 
\begin{equation}
 \label{c:3}
\begin{aligned}
 G(x, u) & = \overline{\psi_1(u, x)}(1+\vert u\vert)^{-N}\\ 
\Phi(s, u) & =  e^{-i \dual{\ell}{[u, [u, s]]/2}} \phi(u) (1+\vert u\vert)^{N} 
\end{aligned}
\end{equation}
and the Fourier transform is considered in the second variable of these functions. 

A computation similar to the one in \cite{Lu83a} shows that 
\begin{equation}\label{c:4}
 \vert \hat\Phi(s, \xi)\vert \le C (\det (A(s)^2 +1))^{-1/4}, 
\end{equation}
where $A(s)$ is the bilinear form
$$ \dual{A(s)u}{u} =\dual{\ell}{[u, [u, s]]}.
$$

It remains to estimate the $L^1$ norm of $\hat{G}(x, \xi)$. 
Note that
$$
G(x, u)   =  e^{i \dual{\ell}{[x, u]/2 +[x, [x, u]]/6+[u, [u, x]/3]}}\overline{\psi(u-x)}(1+\vert u\vert)^{-N}
$$
 Since  $\psi\in \Sc(\Xit)$ we get that we can chose $N$ large enough such that 
there is $C$ with 
$$ \vert \partial_u^\alpha G(x, u) \vert \le C_{\alpha, N}(1+|u|)^{-k-1} .$$
Hence there is $C>0$ such that for all $x\in \Xc$
\begin{equation}\label{c:5}
\norm{\hat G(x, \cdot)}_{L^1(\Xc^\ast)} \le C.  
\end{equation}
Summing up \eqref{c:5} and \eqref{c:4} in 
\eqref{c:3} we get that 
$$ 
\sup_{x\in \Xc}\vert c^\pi_{f, g}(xtsn)\vert\le C \vert c_{\xi, \eta}^{\pi_0}(t) \vert (\det(A(s)^2+1))^{-1/4}. 
$$

Let $\omega$ be the weight on $\gg_0$ defined by  $\omega(s) =  (\det(A(s)^2+1))^{1/4}$. 
Then we have obtained that if a polynomial $p$ is such that $\vert p(xtns)\vert\le C \omega(s)^{-1}$, together with its $G$-derivatives, then 
it belongs to $V_\pi$.
 Combining this with Lemma~\ref{scoeff}, we see that the ideals in $\II^\pi$ of the form
$I^W$ with $W\in V_\pi$ correspond to linear, translation invariant subspaces of polynomials on $\gg_0$ that are bounded by $\omega$. 
By the result in \cite{Lu83a} we actually know that these are all the primary ideals in $\II^\pi$.


\begin{thebibliography}{10000000}


\bibitem[BLSV78]{BLSV78}
\textsc{J.~Boidol, H.~Leptin, J.~Sch\"urman, D.~Vahle}, 
R\"aume primitiver Ideale von Gruppenalgebren. 
\textit{Math. Ann.} \textbf{236} (1978), no.~1, 1--13. 

\bibitem[Br73]{Br73}
\textsc{I.D.~Brown}, 
Dual topology of a nilpotent Lie group. 
\textit{Ann. Sci. \'Ecole Norm. Sup.(4)} \textbf{6} (1973), 407--411.


\bibitem[dC87]{dC87}
\textsc{F.~du Cloux},  
Jets de fonctions diff\'erentiables sur le dual d'un groupe de Lie nilpotent. 
\textit{Invent. Math.}\textbf{88} (1987), no. 2, 375--394. 

\bibitem[CG90]{CG90}
\textsc{L.J.~Corwin, F.P.~Greenleaf}, 
\textit{ Representations of nilpotent Lie groups and their applications. Part I. Basic theory and examples.}
Cambridge Studies in Advanced Mathematics \textbf{18}, Cambridge University Press, Cambridge, 1990.

\bibitem[Di60]{Di60}
\textsc{J.~Dixmier},  
\textit{Op\'erateurs de rang fini dans les repr\'esentations unitaires.}
 Publ. Math\'em. de l'IH\'ES \textbf{6} (1960), no. 1, 13--25.


\bibitem[Di77]{Di77}
\textsc{J.~Dixmier}, 
\textit{ Enveloping algebras}. North-Holland Mathematical Library \textbf{14}, 
North-Holland Publishing Co., Amsterdam-New York-Oxford, 1977.

\bibitem[Ho77]{Ho77}
\textsc{R.E.~Howe},
 On a connection between nilpotent groups and oscillatory integrals associated to singularities. 
 \textit{Pacific J. Math.}\textbf{73} (1977), no.~2, 329--363.


\bibitem[HL81]{HL81}
\textsc{W.~Hauenschild, J.~Ludwig}, 
The injection and the projection theorem for spectral sets.
\textit{Monatsh. Math.}
\textbf{92} (1981), no.~3, 167--177.

\bibitem[Ki62]{Ki62}
\textsc{A.A.~Kirillov}, 
Unitary representations of nilpotent Lie groups. 
\textit{Uspehi Mat. Nauk} \textbf{17} (1962) no.~4 (106), 57--110. 
 
 \bibitem[Lu80]{Lu80}
\textsc{J.~Ludwig}, 
Polynomial growth and ideals in group algebras. 
\textsc{Manuscripta Math.} \textbf{30} (1980), no.~3, 215--221.
 
 

\bibitem[Lu83a]{Lu83a}
\textsc{J.~Ludwig}, 
On the spectral synthesis problem for points in the dual of a nilpotent Lie group. 
\textsc{Ark. Mat.} \textbf{21} (1983), no.~1, 127--144. 


\bibitem[Lu83b]{Lu83b}
\textsc{J.~Ludwig}, 
 On primary ideals in the group algebra of a nilpotent Lie group. 
 \textit{Math. Ann.}
 \textbf{262}
 (1983), no. 3, 287--304. 

\bibitem[Lu86]{Lu86}
\textsc{J.~Ludwig},
On the Hilbert-Schmidt seminorms of $L^1$ of a nilpotent Lie group. 
\textit{Math. Ann.} \textbf{273} (1986), no.~3, 383--395.


\bibitem[LM10]{LM10}
\textsc{J.~Ludwig, C.~Molitor-Braun},  
Flat orbits, minimal ideals and spectral synthesis. 
\textit{Monatsh. Math.} \textbf{160} (2010), no. 3, 271--312.



\bibitem[LMP13]{LMP13}
\textsc{J.~Ludwig, C.~Molitor-Braun, D.~Poguntke}, 
Spectral synthesis for flat orbits in the dual space of weighted group algebras of nilpotent Lie groups. 
\textit{Trans. Amer. Math. Soc.} 
\textbf{365} (2013), no. 8, 4433--4473.


\bibitem[Pe84]{Pe84}
\textsc{N.V.~Pedersen}, 
On the infinitesimal kernel of irreducible representations of
nilpotent Lie groups. 
\textit{Bull. Soc. Math. France} \textbf{112} (1984), no. 4, 423--467.


\bibitem[Pe94]{Pe94}
\textsc{N.V.~Pedersen}, 
Matrix coefficients and a Weyl correspondence for nilpotent Lie groups. 
\textit{Invent. Math.} \textbf{118} (1994), no.~1, 1--36. 


\bibitem[Pu79]{Pu79}
\textsc{L.~Pukanszky},  On Kirillov's character formula. 
\textit{J. Reine Angew. Math.} \textbf{311/312} (1979), 408--440.



\end{thebibliography}
\end{document}